\newtheorem{theorem}{Theorem}[section]
\newtheorem{definition}{Definition}[section]
\newtheorem{prop}[theorem]{Proposition}
\newtheorem{lemma}[theorem]{Lemma}
\newcommand{\pert}{\nabla(\W\Lq)}
\newcommand{\fkl}{f_{klk'l'}}
\newcommand{\pertperp}{\grad^\perp(\W\Lq)}
\newcommand{\Dtq}{D_{t,q}}
\newcommand{\Lq}{L_{q+1}}
\newcommand{\dq}{\delta_q}
\newcommand{\dqone}{\delta_{q+1}}
\newcommand{\dqtwo}{\delta_{q+2}}
\newcommand{\laq}{\lambda_q}
\newcommand{\laqone}{\lambda_{q+1}}
\newcommand{\muqone}{\mu_{q+1}}
\newcommand{\wkl}{w_{kl}}
\newcommand{\wklprime}{w_{k'l'}}
\newcommand{\Xl}{\mathcal{X}_{l}}
\newcommand{\Xlprime}{\mathcal{X}_{l'}}
\newcommand{\M}{\mathring{M}}  
\newcommand{\E}{\mathcal{E}}
\newcommand{\X}{\mathbb{X}}
\newcommand{\bark}{\bar{k}}
\newcommand{\V}{\mathbb{V}}
\newcommand{\W}{\mathbb{W}_{q+1}}
\newcommand{\Wqone}{\mathbb{W}_{q+1}}
\newcommand{\D}{\mathcal{D}}
\newcommand{\akl}{a_{kl}}
\newcommand{\aklprime}{a_{k'l'}}
\newcommand{\aklexp}{\akl e^{i\laqone(\Phi_l-x)\cdot k}}
\newcommand{\aminusklexp}{a_{-kl} e^{i\laqone(\Phi_l-x)\cdot k}}
\newcommand{\aklexpprime}{\aklprime e^{i\laqone(\Phi_{l'}-x)\cdot k'}}
\newcommand{\expk}{e^{i\laqone k\cdot x}}
\newcommand{\expkprime}{e^{i\laqone k'\cdot x}}
\newcommand{\torus}{\mathbb{T}^3}
\newcommand{\Bdot}{\mathring{B}}
\newcommand{\Mdot}{\mathring{M}}
\newcommand{\Cbar}{\bar{C}}
\newcommand{\inverselap}{(-\overline{\Delta})^{-1}}
\newcommand{\grad}{\overline{\nabla}}
\newcommand{\lap}{\overline{\Delta}}
\newcommand{\kbar}{\overline{k}}
\newcommand{\Pgrad}{\mathbb{P}_\nabla}
\newcommand{\Pbarqone}{\bar{{\mathbb{P}}}_{\approx \laqone}}
\newcommand{\Pgradk}{{\mathbb{P}}^\nabla_{q+1,k}}
\newcommand{\Pgradminusk}{{\mathbb{P}}^\nabla_{q+1,-k}}
\newcommand{\Pgradkprime}{{\mathbb{P}}^\nabla_{q+1,k'}}
\newcommand{\Pgradkbarperp}{\overline{\Pgradk}^\perp}
\newcommand{\Pgradkbarperpprime}{\overline{\Pgradkprime}^\perp}
\newcommand{\Pcurl}{\mathbb{P}_{\curl}}
\newcommand{\Pgradbar}{\mathbb{P}^{\grad}}
\newcommand{\Pgradbarperp}{\mathbb{P}^{\grad^\perp}}
\newcommand{\kbarperpprime}{{\kbar'}^\perp}
\newcommand{\gradperpinverse}{\left(\grad^\perp\right)^{-1}}
\def\XXint#1#2#3{{\setbox0=\hbox{$#1{#2#3}{\int}$ }
\vcenter{\hbox{$#2#3$ }}\kern-.6\wd0}}
\newcommand{\Id}{\operatorname{Id}}
\newcommand{\curl}{\operatorname{curl}}
\newcommand{\supp}{\operatorname{supp}}
\newcommand{\Riesz}{\mathcal{R}}
\numberwithin{equation}{section}
\title[Non-uniqueness of Weak Solutions to the 3D Quasi-Geostrophic Equations]{Non-uniqueness of Weak Solutions to the 3D Quasi-Geostrophic Equations}
\begin{document}
\author{Matthew D. Novack}
\begin{abstract}
    We show that weak solutions to the 3D quasi-geostrophic system in the class $C^\zeta_{t,x}$ for $\zeta<\frac{1}{5}$ are not unique and may achieve any smooth, non-negative energy profile. Our proof follows a convex integration scheme which utilizes in a crucial way the stratified nature of the quasi-geostrophic velocity field, providing a link with the 2D Euler equations. In fact we observe that under particular circumstances our construction coincides with the convex integration scheme for the 2D Euler equations introduced by Choffrut, De Lellis, and Szek\'{e}lyhidi \cite{cdlsj12} and recovers a result which can already be inferred from the arguments of Buckmaster, De Lellis, Isett, and Sz\'{e}kelyhidi \cite{bdlisj15} or Buckmaster, Shkoller, and Vicol \cite{bsv16}.
\end{abstract}
\maketitle
\tableofcontents
\section{Introduction}

The inviscid three-dimensional quasi-geostrophic equation, or 3D QG, is a system of equations used to describe oceanic and atmospheric circulation. In this paper, we pose the equations for $(t,x,y,z)\in\mathbb{R} \times \mathbb{T}^2 \times  [0,2\pi]$, which corresponds to the physical setting of a stratified, rotating fluid with solid walls at $z=0$ and $z=2\pi$ \cite{dg}. The velocity field is given in terms of the stream function $\Psi:\mathbb{R}\times \mathbb{T}^2 \times [0,2\pi]\rightarrow \mathbb{R}$ by
$$ \grad^\perp\Psi(t,x,y,z) := (-\partial_y\Psi(t,x,y,z), \partial_x\Psi(t,x,y,z), 0). $$
We use the notation $\partial_\nu \Psi$ to denoted the outward pointing normal derivatives of $\Psi$ at $z=0,2\pi$. The following set of equations then governs the evolution of $\nabla\Psi$:

\[\begin{dcases}
   \partial_t(\Delta\Psi) + \grad^\perp\Psi \cdot \nabla(\Delta\Psi) = 0 & (t,x,y,z)\in \mathbb{R}\times\mathbb{T}^2 \times [0,2\pi] \\
   \partial_t(\partial_\nu\Psi) + \grad^\perp\Psi \cdot \nabla(\partial_\nu\Psi) = 0 & (t,x,y,z)\in\mathbb{R}\times\mathbb{T}^2\times\{0,2\pi\}.
\end{dcases}
\]
The above equations state that the potential vorticity $\Delta\Psi$ and Neumann derivative $\partial_\nu \Psi$ are transported by the velocity field $\grad^\perp\Psi$. In this paper, we shall exclusively use the following reformulation due to Puel and Vasseur \cite{pv}.
\[\begin{dcases}
   \partial_t(\nabla\Psi) + \grad^\perp\Psi \cdot \nabla(\nabla\Psi) = \curl(Q) & (t,x,y,z)\in \mathbb{R}\times\torus\\
   \curl(Q)\cdot(0,0,1) = 0 & (t,x,y,z)\in\mathbb{R}\times\mathbb{T}^2\times\{0,2\pi\}.
\end{dcases}
\]
Weak solutions to the reformulated problem are defined via the following equality for all test functions $\phi$ in $C^{\infty}\left(\mathbb{R}\times\torus\right)$ which are compactly supported in time (\cite{pv}):
$$ \int_{\mathbb{R}}\int_{\torus} \partial_t(\nabla\phi)\cdot\nabla\Psi + \nabla\Psi \cdot \left( \grad^\perp\Psi\cdot\nabla\nabla\phi \right) \,dt\,dx\,dy\,dz = 0. $$

The original equations should be understood as analogous to the vorticity formulation of the Euler equations, while the reformulation corresponds to the Euler equations in their standard form. As the purpose of this paper is a non-uniqueness theorem, it is then natural that the weak solutions we construct will be defined at the level of the reformulation. Under sufficient integrability assumptions on $\Delta\Psi$ and $\partial_\nu\Psi$ (not satisfied by the solutions we construct in this paper), it is shown in \cite{pv} and \cite{novackweak} that weak solutions to the reformulated problem are weak solutions to the original system of equations, and vice versa.
The vector field $\curl(Q)$ plays a role analogous to that of the pressure in the Euler equations and is therefore defined in terms of a projection operator applied to the nonlinear term $\grad^\perp\Psi\cdot\nabla(\nabla\Psi)$, with $Q$ itself solving the elliptic equation
$$  -\Delta Q = \curl\left( \grad^\perp\Psi \cdot \nabla(\nabla\Psi) \right).  $$
Since weak solutions are defined via integration against vector fields $\nabla\phi$, $\curl(Q)$ does not appear in the weak formulation. In this paper, we prove the first result demonstrating anomalous behavior of the energy profile for such weak solutions (in fact for any type of weak solutions) to 3D QG.
\begin{theorem}\label{maintheorem}
Let $e:\mathbb{R}\rightarrow[0,\infty)$ be a smooth, compactly supported function and $\zeta\in \left(0,\frac{1}{5}\right)$.  Then there exist vector fields $\nabla\Psi\in C^\zeta\left(\mathbb{R}\times\torus\right)$ and $Q\in L^\infty\left(\mathbb{R};C^{2\zeta}(\torus)\right)$ such that $\nabla\Psi$ is a weak solution to 3D QG and
$$ \int_{\mathbb{T}^3} |\nabla\Psi(t,x,y,z)|^2 \,dx\,dy\,dz = e(t). $$
\end{theorem}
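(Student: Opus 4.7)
The plan is to establish the theorem via a Nash-type convex integration scheme, inductively constructing a sequence of approximate solutions $(\Psi_q, \M_q)$ to a relaxed ``QG-Reynolds'' system of the form
\[
\partial_t \nabla\Psi_q + \grad^\perp\Psi_q \cdot \nabla(\nabla\Psi_q) = \curl(Q_q) + \dive \M_q,
\]
parameterized by a frequency $\laq$ and amplitude $\dq = \laq^{-2\beta}$ with $\beta$ chosen slightly larger than $\zeta$. At stage $q+1$ one adds a highly oscillatory gradient perturbation $w_{q+1} = \nabla \psi_{q+1}$ at frequency $\laqone \gg \laq$, arranged so that $\Psi_{q+1} = \Psi_q + \psi_{q+1}$ satisfies the QG-Reynolds equation with a new stress $\M_{q+1}$ of size $\dqtwo$ in $L^\infty$. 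Telescoping and choosing $\beta$ carefully then yields a limit $\nabla\Psi \in C^\zeta$ that solves 3D QG in the weak sense, in parallel with a scheme to prescribe the energy $e(t)$.

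The structural feature that the scheme must exploit is that the transport velocity $\grad^\perp\Psi = (-\partial_y\Psi, \partial_x\Psi, 0)$ has no vertical component, so the dynamics in each horizontal slice $\{z=\text{const}\}$ look like a 2D Euler problem coupled across $z$ only through the stress. Accordingly, I would choose building blocks of the form
\[
\psi_{q+1}(t,x,y,z) \;=\; \sum_k a_k(t,x,y,z)\, e^{i\laqone k \cdot (x,y)}
\]
with strictly \emph{horizontal} wave vectors $k$, and amplitudes $a_k$ selected via a geometric-lemma decomposition so that the fast average of $\nabla\psi_{q+1} \otimes \grad^\perp\psi_{q+1}$ cancels $\M_q$. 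The fact that the perturbation is automatically a gradient of a stream function is exactly what keeps us inside the QG formulation, and using only horizontal oscillations is what should produce the 2D Euler Onsager-type threshold $1/5$ rather than the weaker 3D Euler threshold, recovering slicewise the Choffrut--De Lellis--Sz\'{e}kelyhidi construction.

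Once the ansatz is fixed, the argument reduces to a standard checklist. First, regularize $(\Psi_q, \M_q)$ at a scale intermediate between $\laq$ and $\laqone$ to tame derivative losses in the transport error. Second, construct the new Reynolds stress $\M_{q+1}$ by applying an inverse-divergence operator, tailored to the QG $\curl/\nabla$ decomposition, to the sum of oscillation, transport, and Nash errors, and verify it has the claimed size. Third, prescribe the energy by gluing: on a scale coarser than $\laqone$, choose the modulus of the amplitudes proportional to $\sqrt{e(t) - \|\nabla\Psi_q\|_{L^2}^2}$ where $e$ is bounded below, and cut off the perturbation where $e$ vanishes. Summing the geometric series of perturbation sizes then gives the $C^\zeta$ regularity, while the smallness of $\M_q$ and the usual energy identity force the prescribed energy profile in the limit.

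The main obstacle I expect is to reconcile three simultaneous constraints: the perturbation must remain a full 3D gradient $\nabla\psi_{q+1}$, the cancellation of $\M_q$ must be achieved using only horizontal oscillations, and the pressure-analog $\curl(Q_q)$ must absorb any remaining vertical component of the error through the appropriate projection suggested by the macro $\mathbb{P}_\nabla$. The first two demands are in tension because horizontal oscillations of a 3D gradient produce tensors $\nabla\psi \otimes \grad^\perp\psi$ whose vertical rows are themselves controlled by $\partial_z a_k$, so the geometric lemma needs a version in which the target $\M_q$ is decomposed slicewise in $z$ modulo terms that can be absorbed into $\curl(Q_{q+1})$. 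Engineering this decomposition, together with a $z$-dependent cutoff scheme that keeps transport derivatives along $\grad^\perp\Psi_q$ from developing vertical oscillations, is the place where the construction genuinely diverges from the 2D Euler template, and is likely where the exponent $1/5$ becomes sharp for the scheme.
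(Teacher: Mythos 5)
Your sketch captures the overall shape of the argument (gradient building blocks, a 2D-flavored inverse divergence, and plane-wave oscillations leading to the $1/5$ threshold), but there are two gaps that I believe would sink the construction as written.

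First, the choice of \emph{strictly horizontal} wave vectors $k$ is not viable. The Reynolds stress $\M_q$ produced by this scheme is a full $3\times 3$ matrix field with zeros only in the third column; its third \emph{row} (the part coming from $\partial_z\Psi_q \cdot \grad^\perp\Psi_q$) is generically nonzero and of size $\dqone$. The low-frequency average of $\nabla\psi_{q+1}\otimes\grad^\perp\psi_{q+1}$ is $\sum_k |a_k|^2\, k\otimes\bar{k}^\perp$; if $k_3=0$ this average has an \emph{identically vanishing} third row, and the vertical oscillation coming from $\partial_z a_k$ is smaller by a factor $\laq/\laqone$, so it cannot cancel the third row of $\M_q$. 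Moreover, that third row cannot be ``absorbed into $\curl(Q_{q+1})$'': at low frequency the natural Helmholtz-type decomposition uses 2D projectors, and the projection absorbed into the curl necessarily has vanishing third component, so the third row always lands in $\Mdot_{q+1}$. The paper resolves this by proving a geometric lemma whose wave vectors live in $\mathbb{S}^2\cap\mathbb{Q}^3$ with genuinely nonzero third components (e.g.\ $\tfrac{1}{13}(5,0,12)$), constrained only to lie at positive distance from the $z$-axis so that the $x,y$-only inverse divergence still gains a full factor of $\laqone^{-1}$. The $1/5$ exponent arises from the plane-wave character of the building blocks (forced by the constraint that perturbations be gradients, so $\curl\nabla\equiv 0$ rules out Beltrami flows), not from horizontality.

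Second, the scheme must contend with the solid walls at $z=0,2\pi$ and the boundary condition $\curl(Q)\cdot(0,0,1)=0$ there, which your outline does not address. This is the reason the paper introduces the $z$-dependent cutoff $\Lq$: the solutions are built to vanish near the walls, making the boundary condition trivially satisfied and restoring periodicity so the Fourier-analytic toolbox applies. Your stated motivation for a $z$-cutoff, controlling vertical oscillations of the transport, is misplaced; because $\grad^\perp\Psi_q$ has no vertical component and $\Lq$ depends only on $z$, one has $\Dtq\Lq=0$ for free, and the cutoff commutes with the relevant operators. Its role is entirely to localize the support, which in turn requires the key observation that $\nabla(\Lq\W)$ is a stationary solution \emph{to leading order}: $\grad\cdot(\nabla(\Lq\W)\otimes\grad^\perp(\Lq\W))=\Lq^2\,\grad\cdot(\nabla\W\otimes\grad^\perp\W)$, with a controlled commutator error produced when pulling $\Lq^2$ through the curl.
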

The proof of \cref{maintheorem} proceeds via a convex integration scheme.  While we shall postpone a more detailed description of the proof for the time being, we pause to make two remarks. First, the methods used in the proof of \cref{maintheorem} can be adapted with minimal effort to demonstrate the existence of infinitely many weak solutions sharing the same smooth initial data.  We outline the adjustments needed to prove such a statement in subsection \cref{ss:infinitelymany} in the appendix. Such pathological behavior has been investigated using convex integration methods in the literature, and we refer to the works of De Lellis and Sz\'{e}kelyhidi \cite{deLellis2008}, Sz\'{e}kelyhidi and Wiedemann \cite{Szkelyhidi2012}, Daneri \cite{Daneri2014}, Daneri and Sz\'{e}kelyhidi \cite{Daneri2017}, Colombo, De Lellis, and De Rosa \cite{Colombo2018}, and De Rosa \cite{DeRosa2018} for further discussion of this question and related phenomena. 

Secondly, we point out that the stratification provides a link between 3D QG and the two-dimensional Euler equations.  Convex integration for 2D Euler equations was first considered by Choffrut, De Lellis and Sz\'{e}kelyhidi in the class of continuous solutions \cite{cdlsj12}.  In \cite{bsv16}, Buckmaster, Shkoller, and Vicol observe that by replacing the Beltrami waves used in \cite{bdlisj15} with Beltrami plane waves as in \cite{cdlsj12}, the following theorem can be shown using either the methods from their paper or those of Buckmaster, De Lellis, Isett and Sz\'{e}kelyhidi \cite{bdlisj15}.
\begin{theorem}\label{maintheorem2d}
Consider the two-dimensional Euler equations
\[\begin{dcases}
   \partial_t u + u \cdot \nabla u + \nabla p = 0 & (t,x,y)\in\mathbb{R}\times\mathbb{T}^2\\
   \nabla \cdot u = 0 & (t,x,y)\in\mathbb{R}\times\mathbb{T}^2.
\end{dcases}
\]
Given a smooth, compactly supported energy profile $e:\mathbb{R}\rightarrow[0,\infty)$ and $\zeta\in\left(0,\frac{1}{5}\right)$, there exists $(u,p)$ which solves the equations in the sense of distributions with $u\in C^{\zeta}(\mathbb{R}\times\mathbb{T}^2)$, $p\in L^\infty\left(\mathbb{R};C^{2\zeta}(\torus)\right)$, and
$$ \int_{\mathbb{T}^2} |u(t,x,y)|^2 \,dx\,dy = e(t). $$
\end{theorem}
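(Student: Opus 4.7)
The plan is to construct the solution by a convex integration iteration running in direct parallel to the 3D Euler scheme of \cite{bdlisj15, bsv16}, specialized to two dimensions using the Beltrami plane wave ansatz from \cite{cdlsj12}. I would produce a sequence of smooth triples $(u_q,p_q,\mathring{R}_q)_{q\geq 0}$ solving the Euler--Reynolds system
\[
   \partial_t u_q + \dive(u_q\otimes u_q) + \nabla p_q = \dive\mathring{R}_q, \qquad \dive u_q = 0,
\]
with parameters $\lambda_q = a^{(b^q)}$ and $\delta_q = \lambda_q^{-2\beta}$ for $a \gg 1$, $b$ close to $1$, and $\beta$ just below $1/5$. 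The inductive bounds
\[
 \|u_q\|_{C^1}\lesssim \lambda_q \delta_q^{1/2},\qquad \|\mathring{R}_q\|_{C^0}\lesssim \delta_{q+1},\qquad \bigl|e(t)-\textstyle\int |u_q|^2\bigr|\lesssim \delta_{q+1}
\]
on the support of $e$ ensure $u_q\to u$ in $C^\zeta$ for every $\zeta<\beta$, with $u$ a weak solution of 2D Euler having the prescribed energy profile.

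The main algebraic input is the construction of the perturbation $w_{q+1} := u_{q+1}-u_q$. In two dimensions every divergence-free field is of the form $\nabla^\perp\psi$, so following \cite{cdlsj12} I would take
\[
 w_{q+1}(t,x) = \sum_{k\in\Lambda} a_k(t,x)\, k^\perp\, e^{i\lambda_{q+1} k\cdot x},
\]
where $\Lambda\subset\mathbb{Z}^2$ is a fixed finite set with $\Lambda=-\Lambda$ chosen so that $\{k^\perp\otimes k^\perp\}_{k\in\Lambda}$ spans the symmetric $2\times 2$ matrices. Each summand is, for constant $a_k$, an exact stationary 2D Euler flow with pressure $-\tfrac12|a_k|^2$. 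A geometric lemma of the Nash--De Lellis--Sz\'ekelyhidi type supplies smooth coefficients $\gamma_k(R)$ for $R$ near the identity with $\sum_{k}\gamma_k(R)^2 k^\perp\otimes k^\perp = R$. Setting $a_k(t,x) = \rho(t,x)^{1/2}\gamma_k(\Id - \mathring{R}_q/\rho)$ for an appropriate low-frequency amplitude $\rho$, the low-frequency part of $w_{q+1}\otimes w_{q+1}$ cancels $\mathring{R}_q$ up to a trace absorbed into the pressure. The scalar $\rho(t)$ is then tuned so that $\int |u_q|^2\,dx + \int |w_{q+1}|^2\,dx \approx e(t)$, which drives the energy error below $\delta_{q+2}$ at the next step.

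To reach the Onsager-critical exponent $\zeta<1/5$, I would next implement the gluing procedure of \cite{bdlisj15, bsv16}: before perturbing, replace $u_q$ on disjoint time intervals of length $\tau_q\sim(\lambda_q\delta_q^{1/2})^{-1}$ by exact smooth solutions of 2D Euler and patch them, so that the effective Reynolds stress becomes temporally localized and the transport error $\partial_t w_{q+1}+u_q\cdot\nabla w_{q+1}$ is eliminated up to negligible terms. The new stress $\mathring{R}_{q+1}$ is then decomposed into a Nash error $w_{q+1}\cdot\nabla u_q$, an oscillation error from the high-frequency beating of distinct plane waves with $k+k'\neq 0$, and a gluing residual. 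Each is inverted by an antidivergence operator $\mathcal{R}$ built from $\nabla\Delta^{-1}$ and estimated by stationary phase; the bounds close below $\delta_{q+2}$ provided $b$ is close to $1$ and $\beta<1/5$.

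The principal obstacle will be closing the error estimates precisely at the Onsager threshold, since the naive scheme without gluing only reaches $\zeta<1/10$. Verifying the gluing and oscillation analysis in the 2D Beltrami-plane-wave setting relies on the cancellation $\sum_k \nabla(a_k^2)\cdot k^\perp = 0$ coming from $k^\perp\cdot k = 0$, together with standard $C^N$ bounds on the coefficients $a_k$ inherited from $\gamma_k$ via the chain rule. Once the iteration closes, the H\"older regularity of $u$ follows from the telescoping $u-u_q = \sum_{p>q} w_p$ and interpolation between the $C^0$ and $C^1$ estimates, while the pressure regularity $p\in L^\infty_t C^{2\zeta}_x$ is read off from $-\Delta p = \dive\dive(u\otimes u)$ via Calder\'on--Zygmund bounds and the H\"older product estimate, which accounts for the doubling of the exponent.
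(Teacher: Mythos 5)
Your proposal is correct in overall architecture — Beltrami plane waves $\sum_k a_k k^\perp e^{i\lambda_{q+1}k\cdot x}$ with amplitudes built from a Nash-type geometric lemma, telescoping $u=\sum w_q$, and interpolation for the H\"older regularity — but it takes a genuinely different route from the paper and contains a conceptual error worth flagging.

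The paper does not build a free-standing 2D Euler scheme. It proves \cref{maintheorem2d} as a corollary of its 3D QG scheme (\cref{eulerinductiveproposition}): one drops the spatial localizer $L_{q+1}\equiv 1$, removes the support restriction \eqref{inductivespatialsupport}, and chooses frequency modes $k\in\Omega_j$ with $k_3=0$ at every stage, so that $\partial_z\Psi_q\equiv 0$ for all $q$ and the reformulated 3D QG equation collapses to $\partial_t\grad\Psi + \grad^\perp\Psi\cdot\grad\grad\Psi = \grad^\perp Q$, i.e.\ 2D Euler with $u=\grad^\perp\Psi$, $p=Q$. All the estimates in \cref{Errorestimates} and \cref{energyincrement} then apply verbatim. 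Your direct approach with the velocity/Euler--Reynolds formulation is equally valid and arguably more self-contained, but it is not what the paper does.

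The genuine issue is your claim that a gluing step is needed to reach $\zeta<1/5$ and that ``the naive scheme without gluing only reaches $\zeta<1/10$,'' together with the attribution of gluing to \cite{bdlisj15, bsv16}. Neither \cite{bdlisj15} nor \cite{bsv16} uses gluing, and neither does this paper: the $1/5$ threshold is reached by the softer machinery of time cutoffs $\mathcal{X}_l$ on intervals of length $\sim \mu_{q+1}^{-1}$, transport of the phase functions $\Phi_l$ and of the stress $\M_{q,l}$ along $\grad^\perp\Psi_q$, and mollification $\M_{q,\ell}=\M_q\ast\phi_q$ (the $1/10$ figure belongs to the earlier De Lellis--Sz\'ekelyhidi scheme without transported phases). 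Gluing — patching exact local-in-time smooth Euler solutions — was introduced by Isett for the $1/3$ Onsager result \cite{isettonsager} and is not needed here. Implementing full gluing would presumably still yield a proof (and likely a stronger exponent), but it is substantially more work than the theorem requires and the citations you give do not support it. To match the paper's level of effort and the cited literature, replace the gluing step with the transport/mollification structure of \cite{bdlisj15}: solve $\Dtq\Phi_l=0$, $\Dtq\M_{q,l}=0$ on the support of each $\mathcal{X}_l$, carry the commutator estimates \eqref{commutatorbsv1}--\eqref{iteratedcommutator}, and the parameter choice $c>\frac{5}{2}$ already delivers $\zeta<\frac{1}{2c}<\frac{1}{5}$.
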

We demonstrate that \cref{maintheorem2d} follows as well from our proof of \cref{maintheorem} to emphasize the connection between the 2D Euler equations and three-dimensional, stratified, rotating fluids.

\subsection{An Outline of the Scheme}
While the reader will likely recognize in our proof shared attributes with recent convex integration schemes, 3D QG presents several particular difficulties. Therefore, in this section we provide a road map for our argument, along the way highlighting the new aspects of our construction and introducing some terminology and notation we will use throughout the paper which is specific to 3D QG.

\subsubsection{Handling the Solid Walls and Finding Stationary Solutions}

An immediate obstacle to adapting standard convex integration schemes to 3D QG is the presence of physical boundary conditions at the solid walls $z=0$ and $z=2\pi$. To our knowledge, a significant majority of the existing arguments, with the exception of a work of Isett and Oh \cite{isettoh2016} which we discuss in \cref{background}, apply to systems with periodic boundary conditions.  At the level of the reformulation, the physical boundary conditions are manifest in the requirement that the third component of $\curl(Q)$ vanishes at $z=0$ and $z=2\pi$. We are able to circumvent this issue by building solutions which vanish in a neighborhood of $z=0$ and $z=2\pi$.  A convenient upshot of this approach is that the solutions we construct are in fact periodic in $z$ as well, allowing access to the Fourier analytic tools which have become commonplace in convex integration arguments.  

Let us now explain why we handled the solid boundaries in this manner by searching for a stationary solution to 3D QG which can serve as a building block for a convex integration scheme. Throughout the ensuing discussion and the rest of the paper, we identify $\nabla\W\otimes\grad^\perp\W$ with a matrix whose rows are specified by the components of $\nabla\W$ and whose columns are specified by the components of $\grad^\perp \W$.  Differential operators with a bar such as $\grad \cdot$ include derivatives in $x$ and $y$ only.  For example, the divergence $\grad\cdot$ of the above matrix is taken row by row and differentiates in $x$ and $y$ only, thus ignoring the third column (which is already zero).  We seek $\nabla\W$ which solves 
\[\begin{dcases}
    \grad \cdot \left( \nabla \W \otimes \grad^\perp \W \right) = \curl(Q) & (x,y,z)\in \mathbb{T}^2\times[0,2\pi]\\
   \curl(Q)\cdot(0,0,1) = 0 & (x,y,z)\in\mathbb{T}^2\times\{0,2\pi\}.
\end{dcases}
\]
A straightforward calculation (\cref{stationarysolutions}) shows that eigenfunctions of the Laplacian will satisfy the first equation. Therefore, a natural candidate for a stationary solution would be a linear combination of $\mathbb{T}^3$-periodic complex exponentials $e^{i k\cdot x}$ supported on a sphere on frequency. The second equation, however, requires specific behavior of the eigenfunctions at the boundary. A natural way to achieve this would be to impose that $\partial_z \W$ vanishes at $z=0, 2\pi$. Therefore, if the Fourier series of $\W$ contains the term $c_k e^{i(k_1,k_2,k_3)\cdot (x,y,z)}$, it should also contain the term $c_k e^{i(k_1,k_2,-k_3)\cdot (x,y,z)}$.  Then
$$ \partial_z \W = c_k e^{i(k_1,k_2,0)\cdot(x,y,0)} ik_3 \left( e^{i k_3 z} - e^{-i k_3 \cdot z} \right)  $$
will vanish at $z=0,2\pi$. The reader may recall that the cancellation in convex integration schemes is predicated on building blocks for which the mean of the tensor product of high-frequency blocks cancels low-frequency errors. This means that the Fourier series of $\W$ must additionally contain the terms $\bar{c_k}e^{-i(k_1,k_2,k_3)\cdot (x,y,z)}$ and $\bar{c_k}e^{-i(k_1,k_2,-k_3)\cdot (x,y,z)}$.  However, this has the effect of annihilating the low frequency portion of $\grad^\perp \W \partial_z \W$ in the entirety of $\torus$. Indeed, choosing modes 
$$\left(k_1, k_2, k_3\right), \quad \left(-k_1, -k_2, -k_3\right), \quad \left(k_1, k_2, -k_3\right), \quad \left(-k_1, -k_2, k_3\right),$$
denoting $\bark^\perp = (-k_2, k_1, 0)$, and writing out the low frequency portion of the third row of $\nabla\W \otimes \grad^\perp \W$, we obtain
$$ |c_k|^2 (k_3) \bark^\perp + |c_k|^2(-k_3)\bark^\perp =0. $$
So we should find another way to enforce the boundary conditions at $z=0$ and $z=2\pi$.

Towards the goal of producing stationary solutions, we instead introduce a cutoff function $\Lq$ which depends on $z$ only and consider the function $\nabla \left(\W \Lq\right)$.  The viability of the cutoff function is visible in the equality
\begin{align*}
\grad \cdot \left( \nabla\left( \Lq \W \right) \otimes \grad^\perp \left( \Lq \W \right) \right) &= \Lq^2 \grad \cdot \left( \nabla \W \otimes \grad^\perp \W \right)\\
&= \Lq^2 \curl(Q)\\
&= \curl(\Lq^2 Q) - \textnormal{lower order terms}
\end{align*}
We prove and discuss this equality in \cref{stationarysolutions} and \cref{importantcancellation}, with the basic idea being that we have constructed solutions \textit{which are stationary to leading order}. 

\subsubsection{The Iterative Scheme}
With appropriate building blocks in hand, we build a solution $\nabla\Psi$ through an iterative process which specifies the behavior of $\nabla\Psi$ at higher and higher frequencies in each subsequent stage.  After $q$ stages of this iteration, we have vector fields $\nabla\Psi_q$, $\curl(Q_q)$ and $E_q$ which solve
\begin{align*}
    \partial_t (\nabla\Psi_q) + \grad\cdot\left( \nabla\Psi_q \otimes \grad^\perp\Psi_q \right) = \curl(Q_q) + E_q.
\end{align*}
At this stage, each function is supported in frequency in a ball of radius $\laq$ around the origin (ignoring the effect of the localizer $\Lq$ for the moment). The goal is to send $E_q$ to $0$ as $q\rightarrow \infty$, thus obtaining a solution to 3D QG in the limit. In order to minimize $E_q$, we essentially add a linear combination of building blocks $\nabla(\Lq\W)$ in the hopes of making
\begin{align}\label{cancellationintro}
\grad \cdot \left( \nabla(\Lq\W) \otimes \grad^\perp(\Lq\W) \right) - E_q 
\end{align}
vanish at low frequencies. In order to facilitate this cancellation, we first require an inverse divergence operator $\D$ satisfying 
$$ E_q = \grad \cdot \Mdot_q = \grad \cdot \left( \D (E_q) \right). $$
In order for \eqref{cancellationintro} to hold, $\D$ must output a matrix field $\Mdot_q$ which resembles a tensor product
$$\nabla(\Lq\W) \otimes \grad^\perp (\Lq\W).$$ 
Therefore, we must define the range of $\D$ to consist of matrices which have zeroes in the third row. In addition, considering that the divergence $\grad\cdot$ is in $x$ and $y$ only, it is natural for $\D$ to be a convolution operator in $x$ and $y$ only as well. After constructing such a $\D$ (see \cref{inversediv}), it is clear that the amount of regularity it gains will depend on only the first two components of the frequency modes of $E_q$. We will refer to these modes throughout the paper as the ``$x$ and $y$ frequency modes."  We emphasize that for vector fields $v=(v_1,v_2,v_3)$, the phrase ``$x$ and $y$
 frequency modes" refers to the first two components of the active frequencies (which are vectors with 3 components) in $v_1$, $v_2$, and $v_3$ rather than the active frequencies of $v_1$ and $v_2$.
 
One also notices that since $\nabla(\Lq\W)$ vanishes in a neighborhood of $z=0$ and $z=2\pi$, $\D(E_q)$ must vanish there as well. We therefore introduce an additional inductive assumption (see \eqref{inductivespatialsupport}); namely, that the spatial supports of $\nabla\Psi_q$, $\curl(Q_q)$, and $E_q$ are contained in the region where $\Lq=1$.  Since the inverse divergence is a convolution in $x$ and $y$ only, $\Mdot_q = \D(E_q)$ will only be supported in the region where $\Lq\equiv 1$ as well. Furthermore, the advection operator $\Dtq := \partial_t + \grad^\perp\Psi_q\cdot \grad$ applied to $\Lq$ satisfies
$$ \partial_t \Lq + \grad^\perp \Psi_q \cdot \grad \Lq =0 .$$
Thus, multiplication by $\Lq$ \textit{commutes} with the important operators in our scheme and does not interfere with the oscillatory term, making its implementation rather simple.

\subsubsection{Connection to 2D Euler}
Suppose that one were to construct a solution to 3D QG which did not depend on $z$.  While such a solution would then ignore all the important physical aspects of three-dimensional quasi-geostrophic dynamics, under this condition the equation becomes
$$ \partial_t \left( \grad \Psi \right) + \grad^\perp\Psi \cdot \grad \left( \grad \Psi ) \right) = \grad^\perp Q, $$
which after setting $u=\grad^\perp\Psi$ and $p=Q$ becomes 2D Euler. To construct solutions to 2D Euler using our scheme, we simply lift all restrictions on the spatial support, discard the localizer $\Lq$, and choose frequency modes with vanishing third component at each stage of the iteration so that $\partial_z \Psi \equiv 0$.  Thus, it is natural that our scheme for 3D QG should produce H\"{o}lder continuous solutions in classes $C^\zeta$ for $\zeta\in\left(0,\frac{1}{5}\right)$, as the Onsager conjecture for 2D Euler remains open in between $\frac{1}{5}$ and $\frac{1}{3}$. 

\subsubsection{An Onsager Conjecture for 3D QG} In \cite{novackweak}, it was shown that weak solutions to 3D QG conserve the energy $\|\nabla\Psi(t)\|_{L^2}^2$ when $\nabla\Psi$ belongs to the space
$ L^\infty_{t,z}\left( \Bdot^s_{3,\infty}\left( \mathbb{R}^2_{x,y}\right) \right) $ for $s>\frac{1}{3}$. The stratification of the velocity field allows for the lower regularity in the $z$ variable; essentially, one only needs to integrate by parts in $x$ and $y$ to show that the energy flux cannot contribute to the spontaneous production or dissipation of energy.  This leads us to conjecture the following dichotomy concerning the flexibility of weak solutions to 3D QG:
\vspace{.1cm}
\begin{changemargin}{2cm}{2cm}
\begin{center}\begin{spacing}{1.3}
\textit{For any $\zeta\in\left(0,\frac{1}{3}\right)$, there exists infinitely many weak solutions which do not conserve the energy $\| \nabla \Psi(t) \|_{L^2}$. In H\"{o}lder classes above $\frac{1}{3}$, the energy of a weak solution is constant in time.}
\end{spacing}\end{center}
\end{changemargin}
Therefore, \cref{maintheorem} addresses the Onsager conjecture for 3D QG only in the regime $\zeta\in(0,\frac{1}{5})$.    

\subsubsection{Relation of Our Result to Non-Uniqueness for 2D SQG}

The Onsager threshold for the inviscid SQG equation is conjectured to correspond to $\partial_z\Psi \in L^\infty$ and is not fully resolved yet (see \cite{bsv16} for a thorough discussion). As our solutions vanish at $z=0$ and $z=2\pi$, \cref{maintheorem} does not imply any results for 2D SQG. Nor does our result follow from the non-uniqueness of 2D SQG shown in \cite{bsv16}.  In 2D SQG, one has that $\Delta\Psi(t)\equiv 0$ for all time $t$. Physically, this represents an atmosphere which is at rest in the interior, and in which all the dynamics occur at the boundary. However, for 3D QG, one does not rule out the possibility of interior vorticity, allowing for the addition of high frequency oscillations not only at the boundary, but in the interior as well. The solutions we construct are not harmonic. Therefore, it is natural that they should be more regular than the dissipative solutions to 2D SQG.

\subsection{Background and Previous Results}\label{background}
Non-uniqueness of weak solutions to the Euler equations has been known for some time, with proofs given by Scheffer \cite{scheffer} and Shnirelman \cite{shnirelman1}. The modern convex integration techniques were developed by De Lellis and Sz\'{e}kelyhidi in \cite{delellisszek1}, \cite{dls13}, and \cite{dls14}. After a number of results investigating the flexibility of solutions and obtaining partial progress towards Onsager's conjecture for the 3D Euler equations (cf. \cite{Choffrut2013}, \cite{cs14}, \cite{buc15}, \cite{bdlisj15}, \cite{bdls16}), \cite{dsj17}, \cite{dls14}, \cite{ise13a}, \cite{ise13b}), a proof of the full conjecture was given by Isett \cite{isettonsager}. In a subsequent work, Buckmaster, De Lellis, Sz\'{e}kelyhidi, and Vicol \cite{bdlsv18} treat the case of dissipative solutions in the full Onsager regime. In \cite{isett17}, Isett constructed H\"{o}lder continuous solutions obeying the local energy inequality. In \cite{iv15}, Isett and Vicol demonstrate non-uniqueness of H\"{o}lder continuous weak solutions to active scalar equations with velocity determined by a Fourier multiplier which is not odd. Non-uniqueness for 2D SQG (an example of an active scalar equation with an odd multplier) was shown by Buckmaster, Shkoller, and Vicol \cite{bsv16}. In addition, non-uniqueness of 3D Navier-Stokes has been demonstrated by Buckmaster and Vicol \cite{bv172}, and Buckmaster, Colombo, and Vicol \cite{BCV2018}.  Stationary solutions to the 4D Navier-Stokes equations have been constructed by Luo \cite{luo18}, and the 3D Navier-Stokes equations by Cheskidov and Luo \cite{cl19}.  Examples of other settings in which convex integration has also been applied include the ideal MHD equations (by Beekie, Buckmaster, and Vicol \cite{bbv19}) or Hall-MHD equations (Dai \cite{dai}).  For a more thorough summary of these and other results, we refer the reader to the survey papers of Buckmaster and Vicol \cite{bv19} and De Lellis and Sz\'{e}kelyhidi \cite{dls19}.


Compactly supported H\'{o}lder continuous solutions to the Euler equations on $\mathbb{R}\times\mathbb{R}^3$ have been constructed by Isett and Oh in \cite{isettoh2016}.  One of the main difficulties of constructing such solutions lies in finding an anti-divergence operator which yields a smooth, symmetric tensor with compact support.  Isett and Oh provide a formula and derivation for such an operator, which in fact coincides with a formula introducted by Bogovskii \cite{bogovskii}. Application of this operator requires that special attention be paid to the angular momentum of the objects involved in the construction, as any weak solution to the Euler equations on $\mathbb{R}\times\mathbb{R}^n$ conserves linear and angular momentum (cf. Proposition 3.1) \cite{isettoh2016}.  The authors thus build a framework to ensure conservation of angular momentum throughout the scheme, in turn ensuring a good control over the spatial support of the stress at each stage of the iteration.  In constrast, the difficulties with spatial support in the current paper lie not in solving the divergence equation, but in finding stationary solutions to the system which in addition are capable of cancelling the stress.  

There are by now a number of significant results for both inviscid and viscous quasi-geostrophic flows on bounded, unbounded, or periodic domains.  Derivations of the three dimensional system in the upper half space have been offered by Bourgeois and Beale in the inviscid case \cite{bb} and Desjardins and Grenier in the viscous case \cite{dg}. Existence of global weak solutions in the inviscid case was first shown by Puel and Vasseur \cite{pv} for initial data belonging to Hilbert spaces. In \cite{novackweak}, several different notions of weak solutions were considered and shown to be equivalent under appropriate assumptions, with an existence proof being offered in the most general setting. In \cite{boundeddomains}, a formal derivation of the three dimensional system on a bounded cylindrical domain with appropriate lateral boundary conditions was given, with global weak solutions shown to exist satisfying those boundary conditions. Local-in-time classical solutions to this model were shown to exist in \cite{novack2019classical}. Global existence of a unique classical solution in the viscous case with spatial domain given by $\mathbb{R}_+^3$ was shown in \cite{novackvasseur}.  

The closely related 2D surface quasi-geostrophic equation (SQG) can be considered as a special case of the 3D QG system when $\Delta\Psi(t)\equiv 0$ for all times $t$. In this case, the dynamics is described by the active scalar equation for the unknown function $\theta:=-\partial_z\Psi|_{z=0}$ with velocity given by $u=\Riesz^\perp\theta$, ($\Riesz$ being the perpendicular vector of two dimensional Riesz transforms):
$$ \partial_t \theta + u \cdot \grad \theta + (-\overline{\Delta})^\alpha \theta = 0.$$
The physical cases correspond to $\alpha=0$ and $\alpha=\frac{1}{2}$. Study of this system, particularly the inviscid version, is extensive due to similarities with the 3D Euler equations and was initiated by Constantin, Majda, and Tabak \cite{cmt}. Global weak solutions in the inviscid case has been shown by Resnick \cite{Resnick} and Marchand \cite{Marchand}. Global existence of smooth solutions has been shown by a number of different methods by Kiselev, Nazarov, and Volberg \cite{knv}, Caffarelli and Vasseur \cite{cv}, Constantin and Vicol \cite{cvicol}, Constantin, Vicol, and Tarfulea \cite{cvicoltarfulea}, and Kiselev and Nazarov \cite{kn}. On bounded domains, a version of the equation defined using the spectral Riesz transform has been considered by Constantin and Ignatova in \cite{ci}, \cite{ci2}, Constantin and Nguyen in \cite{cn}, \cite{cn2}, Nguyen \cite{nguyen17}, and Constantin, Ignatova, and Nguyen \cite{cin18}. Non-uniqueness for both inviscid and viscous SQG on the spatial domain $\mathbb{T}^2$ was shown by Buckmaster, Shkoller, and Vicol \cite{bsv16}.

\section{Preliminaries}\label{Preliminaries}

We begin with definitions and some facts about H\"{o}lder spaces. At various points in these preliminaries, we use the notation $\lesssim$ as a shorthand for inequalities with implicit constants which depend on only on fixed quantities such as dimension and the notation $\approx$ to denote boundedness above and below up to fixed constants.  We emphasize that any usage of the symbol $\lesssim$ or $\approx$ in Sections 3-6 will denote dependence on implicit constants \textit{which are independent of $q$}, the parameter corresponding to the stage of the inductive convex integration procedure.

\begin{definition}[{\textbf{H\"{o}lder Spaces}}]\label{holder spaces}
Let $\alpha\in(0,1)$ and $k$ a non-negative integer and $f:\mathbb{R}\times\mathbb{T}^n\rightarrow\mathbb{R}$ a function of time and space with mean value zero on $\mathbb{T}^n$ for each fixed time.
\begin{enumerate}
    \item The integer spatial H\"{o}lder norms are defined by $$\| f \|_{C^k} = \sup_{t,x} \left|\nabla^k_x f(t,x)\right|.$$
    \item The non-integer spatial H\"{o}lder norms are defined by
    $$\| f \|_{C^{k,\alpha}} = \sup_{t,x,y} \frac{\left| \nabla^k_x f(t,x) - \nabla^k_x f(t,y) \right|}{|x-y|^\alpha} + \| f \|_{C^k}.$$
    \item The following interpolation inequality holds for $0\leq r \leq 1$.
    $$ \left\| f \right\|_{C^{r\alpha}} \leq C(\alpha) \left\| f \right\|_{C^0}^{1-r} \left\| f \right\|_{C^\alpha}^r. $$
\end{enumerate}
\end{definition}

We now define the convolution and projection operators we shall make use of and record some inequalities.  We divide them into two categories: kernels that depend on $x$, $y$, and $z$ and therefore act on functions whose domain is $\mathbb{T}^3$, and kernels that depend only on $x$ and $y$ and therefore act on functions defined on $\mathbb{T}^2$. At various points throughout the discussion, we will freely substitute definitions and proofs for operators defined on $\mathbb{R}^n$ rather than $\mathbb{T}^n$.  Standard transference arguments then provide analogous results for the periodic operators. In addition, all periodic functions are assumed to have mean zero. To simplify notation, we shall write sums over $\mathbb{Z}^3\setminus\{0\}$ as simply being over $\mathbb{Z}^3$, and analogously for $\mathbb{Z}^2$.  We shall require a Bernstein inequality.

\begin{lemma}[{\textbf{Bernstein Inequality}}]\label{bernstein}
Let $f:\mathbb{R}^n\rightarrow\mathbb{R}$ be a smooth function whose Fourier transform $\hat{f}$ vanishes in a neighborhood of the origin.  If $\hat{K}$ is a Fourier multiplier which is smooth away from the origin and homogeneous of degree $s$ and one of the following holds for $\lambda>0$,
\begin{enumerate}
    \item  $\supp \hat{f} \subset \{ | \xi | \leq \lambda \}$ and $s> 0$
    \item  $\supp \hat{f} \subset \{ | \xi | \geq \lambda \}$ and $s< 0$
    \item $\supp \hat{f} \subset \{ | \xi | \approx \lambda \}$ and $s\in \mathbb{R}$, 
\end{enumerate}
then 
    $$ \left\| \left(\hat{K}\hat{f}\right)^{\vee} \right\|_{C^0} \lesssim \lambda^s \| f \|_{C^0} .$$
\end{lemma}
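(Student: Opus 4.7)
The plan is to represent the multiplier action as convolution with a scale-$\lambda$ kernel, invoke Young's convolution inequality, and extract the factor $\lambda^s$ by scaling. Let $\chi \in C_c^\infty(\mathbb{R}^n)$ be a radial bump equal to one on $\{|\xi|\leq 1\}$ and vanishing outside $\{|\xi|\leq 2\}$. In each case I pick a smooth cutoff $\varphi$ satisfying $\varphi(\cdot/\lambda)\hat f = \hat f$: namely $\varphi = \chi$ in Case 1, $\varphi = 1-\chi(2\,\cdot)$ in Case 2, and $\varphi$ a bump supported in an annulus $\{c^{-1} \leq |\xi|\leq c\}$ in Case 3. Then
\[
(\hat K \hat f)^\vee = K_\lambda * f, \qquad K_\lambda := \bigl(\hat K(\xi)\varphi(\xi/\lambda)\bigr)^\vee,
\]
and Young's convolution inequality gives $\|(\hat K \hat f)^\vee\|_{C^0} \leq \|K_\lambda\|_{L^1} \|f\|_{C^0}$.

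The homogeneity $\hat K(\xi) = \lambda^s \hat K(\xi/\lambda)$ shows $\hat K(\xi)\varphi(\xi/\lambda) = \lambda^s m(\xi/\lambda)$ where $m := \hat K\varphi$, so a change of variables under the inverse Fourier transform yields
\[
K_\lambda(x) = \lambda^{s+n}\check m(\lambda x), \qquad \|K_\lambda\|_{L^1(\mathbb{R}^n)} = \lambda^s \|\check m\|_{L^1(\mathbb{R}^n)}.
\]
The estimate therefore reduces to the scale-free claim $\|\check m\|_{L^1} \lesssim 1$ for a single fixed function $m$, which must be verified in each of the three cases.

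Verifying this integrability is the main obstacle, and the sign constraints on $s$ in Cases 1 and 2 are essential. Case 3 is immediate: $m$ is smooth and compactly supported in an annulus disjoint from the origin, so $\check m$ is Schwartz. In Case 1, $m$ is compactly supported, continuous, and smooth away from the origin, with a homogeneous singularity of order $s > 0$ there; since $m \in L^1 \cap L^\infty$, its Fourier transform $\check m$ is bounded and continuous, while the standard asymptotic analysis of the Fourier transform of the homogeneous distribution $\hat K$ yields decay $|\check m(x)| \lesssim |x|^{-n-s}$ at infinity, which is integrable precisely because $s > 0$. In Case 2, $\varphi$ is supported away from the origin, so $m$ is smooth on $\mathbb{R}^n$ with $|\partial^\alpha m(\xi)| \lesssim \langle\xi\rangle^{s - |\alpha|}$; taking $|\alpha|$ large enough produces integrable $\partial^\alpha m$, and the identity $x^\alpha \check m(x) = c_\alpha (\partial^\alpha m)^\vee(x)$ yields decay faster than any polynomial at infinity in $x$, while near the origin one invokes local integrability of the Fourier transform of tempered distributions homogeneous of negative degree $s < 0$, whose leading behavior $|x|^{-n-s}$ is locally integrable exactly when $s < 0$.

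Once $\|\check m\|_{L^1} \lesssim 1$ is established in each case, combining with the previous two displays completes the proof on $\mathbb{R}^n$, and the periodic version on $\mathbb{T}^n$ follows from the standard transference argument alluded to in the preceding discussion.
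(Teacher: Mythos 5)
The paper states this Bernstein-type lemma in its preliminaries without providing a proof, so there is no paper proof to compare against; it is treated as a standard fact. Your argument is correct and follows the canonical textbook route: insert a scale-$\lambda$ cutoff $\varphi(\cdot/\lambda)$ that reproduces $\hat f$, reduce via Young's convolution inequality to an $L^1$ bound on the kernel $K_\lambda$, extract $\lambda^s$ via the homogeneity of $\hat K$ and a change of variables, and then establish a scale-free $L^1$ bound on the fixed reference kernel $\check m$. Your case analysis correctly locates where the sign constraints on $s$ are used: in Case 1 the power-law tail $|\check m(x)|\lesssim |x|^{-n-s}$ at infinity (coming from the order-$s$ singularity of $m=\hat K\chi$ at the origin) is integrable precisely when $s>0$; in Case 2 the origin singularity $|x|^{-n-s}$ is locally integrable precisely when $s<0$, while rapid decay at infinity follows from the symbol estimates $|\partial^\alpha m|\lesssim\langle\xi\rangle^{s-|\alpha|}$; and Case 3 is immediate since $m$ is a compactly supported smooth bump away from the origin. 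The two phrases invoking "standard asymptotic analysis" and "local integrability of the Fourier transform of homogeneous tempered distributions" are the only soft spots; a fully self-contained version would, in Case 2, decompose $m=\sum_{j\geq 0} m_j$ into dyadic annuli and sum the resulting $L^1$ bounds $\|\check m_j\|_{L^1}\lesssim 2^{js}$ (convergent for $s<0$), and in Case 1 would verify the tail estimate by splitting $\check m=\check{\hat K}*\check\chi$. But the asserted kernel asymptotics are genuinely standard and correct, and it is a sensible choice to take $\varphi=\chi$ in Case 1 (rather than a cutoff vanishing near the origin tied to the support of $\hat f$), since this keeps the implicit constant independent of how small the zero-set of $\hat f$ near the origin is.
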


\begin{definition}[{\textbf{$\torus$ Operators}}]\label{t3operators}
Let $f:\mathbb{T}^3\rightarrow \mathbb{R}$, $g: \mathbb{T}^3\rightarrow \mathbb{R}^3$ be smooth, mean-zero functions.
\begin{enumerate}
    \item The vector of $\torus$-Riesz transforms, denoted $\Riesz^3$, acts on Fourier series via
    $$ \Riesz^3 \left( f \right) = \sum_{\mathbb{Z}^3 } \frac{ik}{|k|}\hat{f}(k) e^{ik\cdot x} $$ and satisfies 
    $$ \left\| \Riesz^3(f) \right\|_{C^\alpha} \leq C(\alpha) \| f \|_{C^\alpha} $$
    for non-integer $\alpha>0$ or for $f$ with frequency support in an annulus. If $k$ is an integer, then
    $$ \left\| \Riesz^3 (f) \right\|_{C^k} \leq C(k,\alpha) \left\| f \right \|_{C^{k+\alpha}}. $$
    \item The projector onto gradients $\Pgrad$ is defined by
    $$ \Pgrad \left( g \right) := -\left( \Riesz^3 \otimes \Riesz^3 \right) (g)$$
    and satisfies the same estimates as $\Riesz^3$. 
    \item The projector onto curls $\Pcurl$ is defined by
    $$ \Pcurl (g) = \left( \Id - \Pgrad \right)(g) = \left( \curl \circ \left( - \Delta \right)^{-1} \circ \curl \right)(g) $$
    and satisfies the same estimates as $\Riesz^3$ and $\Pgrad$.
    \item Let $\lambda \in \mathbb{N}$ and $k \in \mathbb{S}^2 \cap \mathbb{Q}$ such that $\lambda k \in \mathbb{Z}^3$. The projector $\mathbb{P}_{\lambda, k}^\nabla$ is defined by $$ \mathbb{P}_{\lambda, k}^\nabla (g) = e^{i\lambda k \cdot x} \frac{ik}{|k|}\otimes\frac{ik}{|k|} \hat{g}(k)$$
    and satisfies 
    $$ \left\| \mathbb{P}_{\lambda, k}^\nabla (g) \right\|_{C^0} \leq \| g \|_{C^0}, \qquad \left\|\mathbb{P}_{\lambda, k}^\nabla (g)\right\|_{C^\alpha} \leq C(\alpha) \| g \|_{C^0} \lambda^\alpha. $$
    For $\lambda=\laqone$, we will denote this operator by 
    $\Pgradk$.
    \end{enumerate}
\end{definition}

We note that by our convention that all periodic functions have mean zero, we can identify $\Pgrad(g)$  with the gradient $\nabla \mathcal{G}$ of a uniquely defined mean-zero function $\mathcal{G}$. Similar properties hold for the operators $\Pcurl$ and $\Pgradk$.

\begin{definition}[{\textbf{$\mathbb{T}^2$ Operators}}]\label{t2operators}
Let $f:\mathbb{T}^2\rightarrow \mathbb{R}$, $g: \mathbb{T}^2\rightarrow \mathbb{R}^2$ be smooth, mean-zero functions.
\begin{enumerate}
    \item The vector of $\mathbb{T}^2$-Riesz transforms, denoted $\Riesz^2$, acts on Fourier series via
    $$ \Riesz^2 \left( f \right) = \sum_{\mathbb{Z}^2} \frac{ik}{|k|}\hat{f}(k) e^{ik\cdot x} $$ and satisfies 
    $$ \left\| \Riesz^2(f) \right\|_{C^\alpha} \lesssim \| f \|_{C^\alpha} $$
    for non-integer $\alpha>0$ or for $f$ with frequency support in an annulus. If $k$ is an integer, then
    $$ \left\| \Riesz^2 (f) \right\|_{C^k} \lesssim \left\| f \right \|_{C^{k+\alpha}} $$
    where the implicit constant depends on $\alpha>0$.
    \item The projector onto gradients $\Pgradbar$ is defined by
    $$ \Pgradbar \left( g \right) = -\left( \Riesz^2 \otimes \Riesz^2 \right) (g)$$
    and satisfies the same estimates as $\Riesz^2$. When $g=\left(g_1,g_2,g_3\right):\torus\rightarrow\mathbb{R}^3$, $\Pgradbar(g)$ projects on the first two components and is the identity on the third component.
    \item The projector onto perpendicular gradients $\Pgradbarperp$ is defined by
    $$ \Pgradbarperp(g) = \left( \Id - \Pgradbar \right)(g) = \left( \grad^\perp \circ \left( - \lap \right)^{-1} \circ (\grad^\perp \cdot) \right)(g) $$
    and satisfies the same estimates as $\Riesz^2$ and $\Pgradbar$.  When $g=\left(g_1,g_2,g_3\right):\torus\rightarrow\mathbb{R}^3$, $\Pgradbarperp(g)$ projects on the first two components and is zero in the third component.
    \item The inverse of $\grad^\perp$, denoted $\left( \grad^\perp \right)^{-1}$, is defined by
    $$  \left( \grad^\perp \right)^{-1}(g) = \left(-\lap\right)^{-1} \circ \left( \grad^\perp \cdot \right) (g). $$
    If the frequency support of $g$ is contained in an annulus of radius $\lambda$, then
    $$ \left\| \left(\grad^\perp\right)^{-1} g \right\|_{C^0} \lesssim \frac{1}{\lambda} \| g \|_{C^0}. $$
    \item Let $\lambda>0$ and define $\overline{\mathbb{P}}_{\approx \lambda}$ by 
    $$ \overline{\mathbb{P}}_{\approx \lambda}\left( f \right) = \sum_{\frac{\lambda}{2} \leq |k| \leq 2\lambda} \hat{f}(k)e^{ik\cdot x}.$$
    Define $\overline{\mathbb{P}}_{\leq \lambda}$ by 
    $$ \overline{\mathbb{P}}_{\leq \lambda}\left( f \right) = \sum_{|k| \leq 2\lambda} \hat{f}(k)e^{ik\cdot x}$$
    and $\overline{\mathbb{P}}_{\geq \lambda}$ similarly. Each operator is bounded from $C^\alpha$ to $C^\alpha$ for any $\alpha\geq 0$.
\end{enumerate}
\end{definition}

We shall frequently apply the $\mathbb{T}^2$ operators to functions $f:\mathbb{T}^3\rightarrow \mathbb{R}$. If $K$ is a $\mathbb{T}^2$ convolution operator, then by definition
$$ K(f)(x,y,z) = \int_{\mathbb{T}^2} K\left( x-s, y-t\right) f(s,t,z) \,ds\,dt. $$ The following lemma will be applied repeatedly throughout the paper.
\begin{lemma}\label{flatfrequencylocalizers}
Let $f:\mathbb{T}^3\rightarrow \mathbb{R}$ be a smooth function. Denote $k\in\mathbb{Z}^3$ by $\left( \bark, k_3 \right)$ and let $\lambda>0$.  Then $\overline{\mathbb{P}}_{\approx \lambda}(f)$ is supported in frequency in the cylinder
$$ \mathcal{C}_\lambda = \left\{k \in \mathbb{Z}^3 : | \bark | = |(k_1,k_2)| \approx \lambda, k_3 \in \mathbb{Z} \right\}. $$
If $\supp \hat{f} \subset \mathcal{C}_\lambda$, then $\overline{\mathbb{P}}_{\approx \lambda}(f) = f$.  Furthermore, analogous statements hold for $\overline{\mathbb{P}}_{\leq \lambda}$ and $\overline{\mathbb{P}}_{\geq \lambda}$ by replacing $\approx$ with $\leq$ and $\geq$, respectively.
\end{lemma}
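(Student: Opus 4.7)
The lemma is essentially a bookkeeping statement about how a $\mathbb{T}^2$ convolution operator acts on a function whose domain includes an additional periodic variable $z$, so my plan is to unpack the definitions carefully and apply Fubini. First, I would expand $f$ in its full $\mathbb{T}^3$ Fourier series, writing $k=(\bark,k_3)\in\mathbb{Z}^2\times\mathbb{Z}$ and
$$f(x,y,z) \;=\; \sum_{\bark\in\mathbb{Z}^2}\Bigl(\sum_{k_3\in\mathbb{Z}}\hat{f}(\bark,k_3)e^{ik_3 z}\Bigr) e^{i\bark\cdot(x,y)} \;=\; \sum_{\bark\in\mathbb{Z}^2} c_{\bark}(z)\, e^{i\bark\cdot(x,y)},$$
i.e.\ the $\mathbb{T}^2$ Fourier expansion of $f(\cdot,\cdot,z)$ with $z$-dependent coefficients $c_{\bark}(z)=\sum_{k_3}\hat{f}(\bark,k_3)e^{ik_3z}$.

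Next, I would recall that by the paragraph preceding the lemma, a $\mathbb{T}^2$ convolution operator such as $\overline{\mathbb{P}}_{\approx\lambda}$ acts on a function of $(x,y,z)$ by integrating against its kernel in the $(x,y)$ variables with $z$ held fixed. In particular, for each fixed $z$, $\overline{\mathbb{P}}_{\approx\lambda}$ multiplies the $\mathbb{T}^2$ Fourier coefficient $c_{\bark}(z)$ by $\mathbf{1}_{\lambda/2\leq |\bark|\leq 2\lambda}$. Substituting back the expression for $c_{\bark}(z)$ and reassembling the full $\mathbb{T}^3$ Fourier series gives
$$\overline{\mathbb{P}}_{\approx\lambda}(f)(x,y,z) \;=\; \sum_{\substack{(\bark,k_3)\in\mathbb{Z}^3\\ \lambda/2\leq|\bark|\leq 2\lambda}} \hat{f}(\bark,k_3)\, e^{i(\bark,k_3)\cdot(x,y,z)},$$
which shows that $\supp \widehat{\overline{\mathbb{P}}_{\approx\lambda}(f)}\subset \mathcal{C}_\lambda$. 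Conversely, if $\supp\hat{f}\subset\mathcal{C}_\lambda$ to begin with, then every term in the above sum already appears in $f$, so the truncation is the identity and $\overline{\mathbb{P}}_{\approx\lambda}(f)=f$.

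Finally, to handle $\overline{\mathbb{P}}_{\leq\lambda}$ and $\overline{\mathbb{P}}_{\geq\lambda}$ I would simply replace the indicator $\mathbf{1}_{\lambda/2\leq|\bark|\leq 2\lambda}$ by $\mathbf{1}_{|\bark|\leq 2\lambda}$ or $\mathbf{1}_{|\bark|\geq \lambda/2}$ respectively and run the identical argument. The only point that requires any mild care is the interchange of the $\bark$ and $k_3$ sums, which is justified by the rapid decay of $\hat{f}$ since $f$ is smooth; there is no genuine obstacle here, the lemma is really just a consequence of the fact that a multiplier in the first two frequency variables leaves the third variable untouched, producing a cylindrical rather than spherical frequency cutoff.
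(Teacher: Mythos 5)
Your proof is correct and follows essentially the same route as the paper: both unpack the $\mathbb{T}^2$ convolution operator as a multiplier acting on the $(x,y)$ Fourier coefficients with $z$ frozen, re-expand the $z$-dependent coefficients in a Fourier series in $k_3$, and read off that the resulting frequency support is cylindrical. The only cosmetic difference is that you begin from the full $\mathbb{T}^3$ Fourier series and reorganize it, whereas the paper first writes the slicewise $\mathbb{T}^2$ series and then invokes uniqueness of Fourier coefficients to identify $\hat a$ with $\hat f$ on the cylinder; the content is the same.
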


\begin{proof}
Fix $z\in [0,2\pi]$. For $(x,y,z)\in\torus$, we denote $(x,y,0)$ by $\bar{x}$.  Then 
$$ \overline{\mathbb{P}}_{\approx \lambda}(f)(x,y,z) = \sum_{\kbar \approx \lambda} \hat{f}(\kbar,z)e^{i\kbar\cdot \bar{x}} $$
where
$$ \hat{f}\left( \bark, z \right) = \frac{1}{(2\pi)^2} \int_{\mathbb{T}^2} f(x,y,z) e^{i\kbar \cdot \bar{x}} \,dx \,dy. $$
Letting $z$ vary, we have that $\hat{f}\left(\bark, z\right)$ is a smooth function of $z$ and can therefore be written as
$$ \hat{f}\left( \kbar, z \right) = \sum_{k_3 \in \mathbb{Z}} \hat{a}\left( \kbar, k_3 \right) e^{i k_3 \cdot z}. $$
Combining the series, we have
$$ \overline{\mathbb{P}}_{\approx \lambda}(f)(x,y,z) = \sum_{|\kbar| \approx \lambda} \sum_{k_3 \in \mathbb{Z}} \hat{a}(\kbar, k_3) e^{i k_3 \cdot z} e^{i \kbar \cdot \bar{x}} =: \sum_{k \in \mathcal{C}_\lambda} \hat{a}(k) e^{ik \cdot x}. $$
By the uniqueness of $\torus$ Fourier coefficients, if $\supp \hat{f} \subset \mathcal{C}_\lambda$, then $\hat{a}(k) = \hat{f}(k)$, and $\overline{\mathbb{P}}_{\approx \lambda}(f) = f$.
\end{proof}

We now define our inverse divergence operator $\D$, which will be a convolution kernel in $x$ and $y$ only. We define the operator on functions whose spatial domain is $\mathbb{T}^3$, however, since it will only be applied to such functions in this paper.  The amount of regularity gained by $\D$ depends only on the $x$ and $y$ frequency modes. 

\begin{prop}[{\textbf{Inverse Divergence of $\grad$}}]
Let $\grad f:\torus \rightarrow \mathbb{R}^3$ have zero mean on each slice $\{z\}\times\mathbb{T}^2$ for $z\in[0,2\pi]$. Define $\E(\grad f)$ by 
\[
   \E(\grad f)=
  \left[ {\begin{array}{cc}
   \partial_{22} \inverselap f - \partial_{11} \inverselap f & -2\partial_{12} \inverselap f \\
   -2\partial_{12} \inverselap f & \partial_{11} \inverselap f - \partial_{22} \inverselap f \\
  \end{array} } \right]
\]
Then $\grad\cdot\E(\grad f) = \grad f$, and $\E(\grad f)$ is symmetric and traceless.  If $\supp \hat{f} \subset \left\{ |\kbar| \geq \lambda \right\}$, then
$$ \left\| \mathcal{E}(\grad f) \right\|_{C^0} \lesssim \frac{1}{\lambda} \| \grad f \|_{C^0}. $$
\end{prop}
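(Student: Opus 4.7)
The plan is to verify the four assertions in turn: the divergence identity, the symmetry, the tracelessness, and the $C^0$ estimate. First I would compute $\grad\cdot\E(\grad f)$ directly. Taking the two-dimensional divergence row by row, the first row produces
$\partial_1(\partial_{22}\inverselap f-\partial_{11}\inverselap f)+\partial_2(-2\partial_{12}\inverselap f)=-\partial_1(\partial_{11}+\partial_{22})\inverselap f=-\partial_1\,\lap\,\inverselap f=\partial_1 f$,
using $\lap\,\inverselap=-\Id$ on slice-mean-zero functions (an assumption one can always arrange, since $\grad f$ determines $f$ on each $z$-slice up to a constant). An identical calculation gives $\partial_2 f$ in the second row, so $\grad\cdot\E(\grad f)=\grad f$. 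Symmetry is visible from the formula, and the diagonal entries are exact opposites, so $\E(\grad f)$ is traceless.

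For the $C^0$ bound, the key observation is that every entry of $\E(\grad f)$ is, up to a numerical constant, of the form $\partial_{ij}\inverselap f=\partial_j\inverselap(\partial_i f)$. The operator $\partial_j\inverselap$ is a two-dimensional Fourier multiplier with symbol $\tfrac{i\bar{k}_j}{|\bar{k}|^2}$, smooth away from $\bar{k}=0$ and homogeneous of degree $-1$ in $\bar{k}$. Since $\partial_i f$ inherits the three-dimensional Fourier support $\{|\bar{k}|\geq\lambda\}$ from $f$, I would apply the Bernstein inequality (\cref{bernstein}, case (2) with $s=-1$) on each $z$-slice to obtain
$\|\partial_j\inverselap(\partial_i f)\|_{C^0(\mathbb{T}^2_{x,y})}\lesssim\lambda^{-1}\|\partial_i f\|_{C^0(\mathbb{T}^2_{x,y})}$. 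Taking the supremum over $z\in[0,2\pi]$ and summing over the finitely many entries yields $\|\E(\grad f)\|_{C^0}\lesssim\lambda^{-1}\|\grad f\|_{C^0}$.

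The one point requiring a moment of care is that $f$ is a function on $\torus$ while Bernstein is phrased for a multiplier acting on $\mathbb{R}^n$. Since $\inverselap$ acts as convolution in $(x,y)$ alone and trivially in $z$, however, this is exactly the flat-frequency-localizer perspective recorded in \cref{flatfrequencylocalizers}: the three-dimensional spectral gap $|\bar{k}|\geq\lambda$ transfers slice by slice to a genuine two-dimensional spectral gap, and the periodic Bernstein bound then follows from the $\mathbb{R}^2$ statement by the transference argument noted in \cref{Preliminaries}. I expect this passage from a three-dimensional hypothesis to a two-dimensional slicewise estimate to be the only step with any real content; the remainder is a routine algebraic calculation and a symbol inspection.
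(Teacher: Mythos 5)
Your proposal matches the paper's proof exactly: a direct computation for the divergence identity (with symmetry and tracelessness read off the formula), followed by the observation that each entry is an order $-1$ Fourier multiplier in $(x,y)$ applied to a component of $\grad f$, so that \cref{flatfrequencylocalizers} and \cref{bernstein} yield the $C^0$ bound. The paper leaves the algebra implicit ("proceeds by direct computation") whereas you spell it out, but the route and the supporting lemmas are the same.
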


\begin{proof}
The equality of $\grad \cdot \mathcal{E}( \grad f )$ and $\grad f$ proceeds by direct computation.  The estimate on the $C^0$ norm follows from first applying \cref{flatfrequencylocalizers} to see that for each $z\in[0,2\pi]$, $\grad f(\cdot,\cdot,z)$ has frequency support outside the set $\{(k_1,k_2) \in \mathbb{Z}^2 : \sqrt{k_1^2 + k_2^2} \geq \lambda\}$.  Then using \cref{bernstein} and the fact that the multiplier is homogeneous of degree $-1$ gives the claim. Notice that $\mathcal{E}$ is identical to the inverse divergence operator defined in \cite{bsv16} after switching the rows and changing the sign of the new second row.
\end{proof}

\begin{prop}[{\textbf{Inverse Divergence of Scalar Functions}}]
Let $ g:\torus\rightarrow\mathbb{R}$ have zero mean for each slice $\{z\}\times\mathbb{T}^2$ for $z\in[0,2\pi]$.  Define $\mathcal{I}(g)$ by 
$$ \mathcal{I}( g) = -\inverselap \grad  g .$$
Then $\grad \cdot \mathcal{I}(g) = g$.  If $\supp \hat{g} \subset \left\{ |\kbar| \geq \lambda \right\}$, then
$$ \left\| \mathcal{I}(g) \right\|_{C^0} \lesssim \frac{1}{\lambda} \| g \|_{C^0}. $$
\end{prop}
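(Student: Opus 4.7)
The proof is essentially a direct calculation together with an application of the Bernstein inequality, in complete parallel to the argument just given for $\mathcal{E}(\grad f)$. The plan is to split into the algebraic identity and the $C^0$ estimate.

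For the identity $\grad\cdot\mathcal{I}(g)=g$, I would first observe that, slice by slice, the function $g(\cdot,\cdot,z)$ has mean zero on $\mathbb{T}^2$ by hypothesis, so $\inverselap$ is well-defined on each slice. Because both $\grad$ and $\inverselap$ are Fourier multipliers in the $x,y$ variables only, they commute with each other. Using $\grad\cdot\grad=\lap$ and the relation $\inverselap\lap=-\mathrm{Id}$ (since $\inverselap=(-\lap)^{-1}$), we immediately get
$$
\grad\cdot\mathcal{I}(g)=-\grad\cdot\inverselap\grad g=-\inverselap\lap g=g.
$$

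For the $C^0$ estimate I would fix $z\in[0,2\pi]$ and regard $g(\cdot,\cdot,z)$ as a function on $\mathbb{T}^2$. By \cref{flatfrequencylocalizers}, the assumption $\supp\hat g\subset\{|\kbar|\geq\lambda\}$ on the full $\torus$-Fourier transform is equivalent to saying that each slice has $\mathbb{T}^2$-Fourier support in $\{|\kbar|\geq\lambda\}$. The operator $-\inverselap\grad$ corresponds on each slice to the Fourier multiplier $-i\kbar/|\kbar|^2$, which is smooth away from the origin and homogeneous of degree $-1$. Applying the Bernstein inequality (\cref{bernstein}, case (2) with $s=-1$) slice by slice yields
$$
\|\mathcal{I}(g)(\cdot,\cdot,z)\|_{C^0(\mathbb{T}^2)}\lesssim \lambda^{-1}\|g(\cdot,\cdot,z)\|_{C^0(\mathbb{T}^2)},
$$
and taking the supremum in $z$ gives the claim.

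There is no real obstacle: the mean-zero assumption guarantees $\inverselap$ makes sense on each slice, and the rest is a one-line multiplier calculation combined with Bernstein. The only mildly subtle step is the transference of the frequency-support hypothesis from $\mathbb{Z}^3$ down to the $\mathbb{T}^2$ slices, which is exactly what \cref{flatfrequencylocalizers} was set up to provide.
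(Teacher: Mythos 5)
Your proof is correct and matches the paper's argument, which likewise verifies the identity by direct computation and obtains the $C^0$ bound from \cref{flatfrequencylocalizers}, \cref{bernstein}, and the degree $-1$ homogeneity of the multiplier. The slice-by-slice reasoning you spell out is exactly the intended reading of those lemmas.
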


\begin{proof}
As before, the equality proceeds by direct computation and the estimate is a corollary of \cref{flatfrequencylocalizers}, \cref{bernstein} and the homogeneity of the symbol.
\end{proof}

The inverse divergence we use will be applied to vector fields for which the first two components are the gradient of a scalar-valued function, while the third component is a (different) scalar-valued function.

\begin{prop}[{\textbf{Inverse Divergence of $\X := (\partial_x f, \partial_y f, g)$}}]\label{inversediv}
Let $\X=(\grad f, g):\torus\rightarrow \mathbb{R}^3$ have zero mean for each slice $\{z\}\times\mathbb{T}^2$ for $z\in[0,2\pi]$.  Define 
$\D(\X)$ to be the $3\times 3$ matrix 
\[
   \D(\X)= \inverselap
  \left[ {\begin{array}{ccc}
   \left(\partial_{22}  - \partial_{11}\right)\left(f\right) & -2\partial_{12} \left(f\right) & 0 \\
   -2\partial_{12}\left(f\right) &  \left(\partial_{11}-\partial_{22}\right)\left(f\right)  & 0 \\
   -\partial_{1} g   & -\partial_{2} g & 0 \\
   \end{array} } \right]
\]
Then $\grad\cdot \D(\X) = \X$.   If $\supp \hat{\X} \subset \left\{ |\kbar| \geq \lambda \right\}\times\mathbb{Z}$, then
$$ \left\| \mathcal{D}(\X) \right\|_{C^0} \lesssim \frac{1}{\lambda} \| \X \|_{C^0}. $$
\end{prop}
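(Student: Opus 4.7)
The plan is to verify the two claims separately, with both following by direct computation and essentially the same argument as the two previous propositions. First I would compute $\grad\cdot \D(\X)$ row by row. The first two rows coincide with the rows of $\E(\grad f)$ extended by a zero third column, and since the horizontal divergence ignores the third column entirely, rows one and two produce $\partial_1 f$ and $\partial_2 f$, respectively, by the preceding proposition. For the third row I would compute
\[
-\partial_1 \inverselap \partial_1 g - \partial_2 \inverselap \partial_2 g \;=\; -\inverselap \lap g \;=\; g,
\]
using that $\inverselap \lap = -\Id$ on functions with mean zero on each horizontal slice (which is guaranteed by the hypothesis on $\X$, since $g = \X_3$ is such a component). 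Putting these three components together gives $\grad\cdot\D(\X)=\X$.

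For the $C^0$ bound the key observation is that every nonzero entry of $\D(\X)$ may be rewritten as a degree $-1$ horizontal Fourier multiplier acting on a single component of $\X$. Explicitly, for the upper $2\times 2$ block write $\inverselap \partial_{ij} f = (\inverselap \partial_i)(\partial_j f) = (\inverselap \partial_i)\X_j$ for $i,j\in\{1,2\}$, and for the third row write $-\inverselap \partial_i g = -(\inverselap \partial_i)\X_3$. In each case the multiplier $\inverselap \partial_i$ has symbol $ik_i/|\kbar|^2$, which is smooth away from the origin and homogeneous of degree $-1$ in the horizontal frequency variable $\kbar$.

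To apply \cref{bernstein} one needs the appropriate frequency support hypothesis, and here is where I invoke \cref{flatfrequencylocalizers}: the assumption $\supp \hat{\X}\subset\{|\kbar|\geq \lambda\}\times\mathbb{Z}$ says precisely that for each fixed $z\in[0,2\pi]$ the horizontal Fourier transform of each component of $\X$ is supported in $\{|\kbar|\geq \lambda\}\subset\mathbb{Z}^2$. Applying \cref{bernstein} slice by slice to the degree $-1$ multiplier $\inverselap \partial_i$ then yields a factor of $\lambda^{-1}$ times $\|\X_j\|_{C^0}$, and taking the sup over the finitely many entries produces the bound $\|\D(\X)\|_{C^0}\lesssim \lambda^{-1}\|\X\|_{C^0}$. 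No step presents a real obstacle; the content of the proposition is a bookkeeping exercise showing that the simple modification of $\E$ in the first two rows, together with a standard scalar anti-divergence in the third row, still gains one horizontal derivative.
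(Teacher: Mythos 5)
Your proposal is correct and takes essentially the same approach as the paper's (very terse) proof: direct computation for the identity $\grad\cdot\D(\X)=\X$, and then \cref{flatfrequencylocalizers} plus \cref{bernstein} applied to the degree $-1$ horizontal multipliers for the $C^0$ bound. You simply spell out the bookkeeping the paper leaves implicit.
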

\begin{proof}
The equality of $\grad \cdot \D \X$ and $\X$ proceeds by direct computation.  The estimate follows from \cref{flatfrequencylocalizers} and \cref{bernstein} as previously.
\end{proof}

The following lemma is the analogue of the so-called geometric lemma from \cite{bsv16} and describes the mechanism by which we can cancel out errors with the addition of high-frequency waves. The choice of two disjoint sets of frequencies $\Omega_1$ and $\Omega_2$ ensures that high-frequency waves $\V_1$ and $\V_2$ defined on time intervals which overlap do not produce unwanted low-frequency interactions.  Choosing $\V_1$ to oscillate at frequencies belonging to $\Omega_1$ and $\V_2$ to oscillate at frequencies belonging to $\Omega_2$ means that even if $\V_1\otimes\V_2 \neq 0$, it is at least high-frequency and will enjoy a strong gain from the application of the anti-divergence operator.

\begin{lemma}[{\textbf{Choosing Frequency Modes}}]\label{frequencymodes}
Define
 \[
\mathcal{M}= \left\{  \left[ {\begin{array}{ccc}
m_1 & m_2 & 0\\
m_3 & -m_1 & 0\\
m_4 & m_5 & 0\\
\end{array}} \right] : m_i \in \mathbb{R} \right\}
\] 
Then there exist matrices $M_1, M_2 \in\mathcal{M}$, $\epsilon >0$, disjoint finite subsets $\Omega_j \in \mathbb{Q}^3 \cap \mathbb{S}^2$ for $j=1,2$, and smooth positive functions defined in a neighborhood of $M_j$ and indexed by $k\in\Omega_j$ which we call $c_{j,k} \in C^\infty\left(B_{\epsilon,\mathcal{M}}(M_j)\right)$ such that
\begin{enumerate}
\item Both of the sets $\Omega_j$ are at positive distance from the $z$-axis
\item $\Omega_j = -\Omega_{j}$ and $c_{j,k} = c_{j,-k}$
\item $13\Omega_j \in \mathbb{Z}^3$ for $j=1,2$
\item For $j=1,2$ and $\forall M \in B_\epsilon(M_j)$, we have
$$ M = \frac{1}{2}\sum_{k\in\Omega_j} {(c_{j,k}(M))^2 k\otimes \bark^\perp}.$$
\item Furthermore, if $M=M_j+N$ where $N\in \mathcal{M}$ satisfies $N^{12}=N^{21}$ (i.e., the top left block of $N$ is symmetric in addition to being traceless), then 
\begin{align}\label{aklidentity}
\sum_{k\in \Omega_j} \left(c_{j,k}(M)\right)^2 = 1. 
\end{align}
\end{enumerate}
\end{lemma}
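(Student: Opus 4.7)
The plan is to construct the sets $\Omega_j$ explicitly from rational points on $\mathbb{S}^2$ with denominator $13$, exploiting $3^2+4^2+12^2=169$ and $5^2+0^2+12^2=169$. First I would enumerate the integer triples $(a,b,c)$ with $a^2+b^2+c^2=169$ and $|c|=12$; up to signs these are $(\pm 3,\pm 4,\pm 12)$, $(\pm 4,\pm 3,\pm 12)$, $(\pm 5,0,\pm 12)$ and $(0,\pm 5,\pm 12)$, giving $24$ unit vectors $k=(a,b,c)/13\in\mathbb{Q}^3\cap\mathbb{S}^2$. Every such $k$ has the crucial property $k_1^2+k_2^2=\rho^2:=25/169$ and so lies at distance $5/13$ from the $z$-axis. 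I would partition these $24$ vectors into two disjoint $\pm$-symmetric sets of size $12$ by putting the $5$ vectors with $k_3=12/13$ and $k_1>0$ together with $(0,5,12)/13$ into $\Omega_1^+$, their reflections across $\{k_1=0\}$ into $\Omega_2^+$, and setting $\Omega_j:=\Omega_j^+\cup(-\Omega_j^+)$. Disjointness follows because vectors in $\pm\Omega_1$ satisfy $k_1\,\mathrm{sgn}(k_3)\geq 0$ while those in $\pm\Omega_2$ satisfy the opposite inequality.

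Next, using the identification of $\mathcal{M}$ with $\mathbb{R}^5$ via the five independent entries $(-k_1k_2,\,k_1^2,\,-k_2^2,\,-k_2k_3,\,k_1k_3)$, I would verify that $\{k\otimes\bark^\perp:k\in\Omega_j^+\}$ spans all of $\mathcal{M}$ by a short row reduction on the resulting $6\times 5$ array; the symmetric combinations $v_k\pm v_{k'}$ for the $(\pm 3,\pm 4,12)/13$ quadruple together with the two axis vectors $(5,0,12)/13$ and $(0,5,12)/13$ already yield five independent directions. Writing $T(\xi):=\sum_{k\in\Omega_j^+}\xi_k\,k\otimes\bark^\perp:\mathbb{R}^6\to\mathcal{M}$, surjectivity furnishes a linear right inverse, which I would translate by an element of $\ker T$ so as to produce a smooth (actually affine) right inverse $s:\mathcal{M}\to\mathbb{R}^6$ satisfying $s(M_j)=(1/12,\dots,1/12)$ at the basepoint $M_j:=\frac{1}{12}\sum_{k\in\Omega_j^+}k\otimes\bark^\perp$. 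By continuity there is $\epsilon>0$ with $s_k(M)>0$ on $B_{\epsilon,\mathcal{M}}(M_j)$, and I would set $c_{j,k}(M):=\sqrt{s_k(M)}$ for $k\in\Omega_j^+$, extending by $c_{j,-k}:=c_{j,k}$.

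Finally I would verify the five conditions. Items (1)--(3) are immediate from the construction. For (4), since $k\otimes\bark^\perp$ is invariant under $k\mapsto -k$, one has $\frac{1}{2}\sum_{k\in\Omega_j}c_{j,k}(M)^2\,k\otimes\bark^\perp=\sum_{k\in\Omega_j^+}s_k(M)\,k\otimes\bark^\perp=T(s(M))=M$. For (5), the key structural identity is $T(\xi)^{12}-T(\xi)^{21}=\sum_k\xi_k(k_1^2+k_2^2)=\rho^2\sum_k\xi_k$, so that $\sum_k s_k(M)=(M^{12}-M^{21})/\rho^2$ for every $M\in B_{\epsilon,\mathcal{M}}(M_j)$. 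Since $M_j^{12}-M_j^{21}=\rho^2/2$ by construction and any $N\in\mathcal{M}$ with $N^{12}=N^{21}$ preserves the value of $M^{12}-M^{21}$, one obtains $\sum_{k\in\Omega_j^+}s_k(M)=1/2$, hence $\sum_{k\in\Omega_j}c_{j,k}(M)^2=1$. The main obstacle is the rank-$5$ check combined with maintaining disjointness of the partition; the essential structural ingredient for condition (5) is the restriction $|k_3|=12/13$, which forces $k_1^2+k_2^2$ to be the common constant $\rho^2$ across $\Omega_j$.
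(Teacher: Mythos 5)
Your construction runs closely parallel to the paper's: both draw rational unit vectors with denominator $13$ from the Pythagorean triples $(3,4,5)$ and $(5,12,13)$, both exploit the fact that $k_1^2+k_2^2=25/169$ is constant across the chosen modes to read off identity (5) from the antisymmetric part of the top-left block of $k\otimes\bark^\perp$, and both produce the coefficient functions by inverting the linear map $\xi\mapsto\sum_k\xi_k\,k\otimes\bark^\perp$ near a balanced basepoint. The main structural deviation is that you take six vectors per half-set, so that $T:\mathbb{R}^6\to\mathcal{M}$ is surjective but not injective and you must manufacture an affine right inverse; the paper instead takes exactly five vectors (one of them with $k_3=-12/13$), so $f_1:\mathbb{R}^5\to\mathcal{M}\cong\mathbb{R}^5$ is a local diffeomorphism and the coefficients are supplied directly by the inverse function theorem. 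Your redundant sixth direction is not wrong, only unnecessary.

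There is, however, a genuine gap in the disjointness step. You define $\Omega_2^+$ by reflecting $\Omega_1^+$ across $\{k_1=0\}$ and argue disjointness from the sign of $k_1\,\mathrm{sgn}(k_3)$, but that quantity vanishes for $(0,\pm5,\pm12)/13$, and $(0,5,12)/13\in\Omega_1^+$ is literally a fixed point of the reflection. Hence $(0,5,12)/13$ and its negative lie in $\Omega_1\cap\Omega_2$, contradicting the disjointness asserted in the lemma, which the scheme relies on in the $O_{low}$ analysis to guarantee that $k\in\Omega_1$, $k'\in\Omega_2$ with $l,l'$ of opposite parity can never satisfy $k+k'=0$. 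A clean repair within your framework is to drop $(0,5,12)/13$ from $\Omega_1^+$: the remaining five vectors with $k_3=12/13$ still span $\mathcal{M}$ (a short rank computation confirms this), the reflection now maps $\Omega_1^+$ into vectors with $k_1<0$, so the resulting $\Omega_1$ and $\Omega_2$ are honestly disjoint, and replacing the normalization $M_j=\frac{1}{12}\sum_{k\in\Omega_j^+}k\otimes\bark^\perp$ by $M_j=\frac{1}{10}\sum_{k\in\Omega_j^+}k\otimes\bark^\perp$ keeps $M_j^{12}-M_j^{21}=\rho^2/2$ so that your verifications of (4) and (5) go through unchanged.
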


\begin{proof}
We begin by constructing $\Omega_1^+$, where $\Omega_1^-$ will be defined as $-\Omega_1^+$ and $\Omega_1 = \Omega_1^+ \cup \Omega_1^-$. We choose the following vectors (inspired by the fact that $(5,12,13)$ and $(3,4,5)$ are Pythagorean triples):
$$k_1 = \frac{1}{13}(5,0,12), \quad k_2 = \frac{1}{13}(3,4,-12), \quad k_3 = \frac{1}{13}(3,-4,12),$$ 
$$\quad k_4 = \frac{1}{13}(0,5,12)\quad k_5 = \frac{1}{13}(3,4,12).  $$
Then it is clear that (1) and (3) hold for $\Omega_1^+$. Constructing the corresponding matrices $k_i \otimes \bark^\perp_i$, denoted $m_{k_i}$, we have
\[
m_{k_1}=  \frac{1}{169} \left[ {\begin{array}{ccc}
0 & 25 & 0 \\
0 & 0 & 0\\
0 & 60 & 0\\
\end{array}} \right] , 
\quad
m_{k_2}=  \frac{1}{169} \left[ {\begin{array}{ccc}
 -12 & 9 & 0 \\
-16 & 12 & 0\\
48 & -36 & 0\\
\end{array}} \right]  , \quad
m_{k_3}= \frac{1}{169}  \left[ {\begin{array}{ccc}
12 & 9 & 0 \\
-16 & -12 & 0\\
48 & 36 & 0\\
\end{array}} \right]  
\] 
\[
m_{k_4}= \frac{1}{169}  \left[ {\begin{array}{ccc}
0 & 0 & 0 \\
-25 & 0 & 0\\
 -60 & 0 & 0\\
\end{array}} \right]  , \quad
m_{k_5}= \frac{1}{169}  \left[ {\begin{array}{ccc}
-12 & 9 & 0 \\
-16 & 12 & 0\\
 -48 & 36 & 0\\
\end{array}} \right]  
\] 
Furthermore, one can check that the set $\{m_{k_i}\}$ is a linearly independent set within $\mathcal{M}$. After identifying $\mathcal{M}$ with $\mathbb{R}^5$, define the function $f_1:\mathbb{R}^5\rightarrow\mathbb{R}^5$ by 
$$ f_1(x,y,z,s,t) = xm_{k_1} + y m_{k_2} + zm_{k_3} + sm_{k_4} + tm_{k_5}. $$
Then $f_1\in C^\infty$ and $Df_1|_{\left(\frac{1}{10},\frac{1}{10},\frac{1}{10},\frac{1}{10},\frac{1}{10}\right)}$ is invertible.  Define $M_1:= f_1\left(\frac{1}{10},\frac{1}{10},\frac{1}{10},\frac{1}{10},\frac{1}{10}\right)$. Applying the inverse function theorem, we obtain $\epsilon_1$ and coefficient functions $c_{1,k}$.  Then adding the set of vectors $\Omega_1^-= \cup_i (-k_i)$, we have $\Omega_1$ such that (1)-(4) are satisfied. To show that (5) is satisfied, we note that given $M$ of such a form, then 
$$ M^{12} - M^{21} = M_1^{12}-M_1^{21}, $$
and that 
$$m_{k_i}^{12}-m_{k_i}^{21}=\frac{25}{169}$$
for each $k_i\in\Omega_1$. Therefore,
\begin{align*}
    \sum_{k\in\Omega_1} \left(c_{1,k}(M)\right)^2 &= \frac{169}{25}\sum_{k\in\Omega_1} \left(c_{1,k}(M)\right)^2 \frac{25}{169}\\
    &= \frac{169}{25}\sum_{k\in\Omega_1} \left(c_{1,k}(M)\right)^2 \left( m_k^{12}-m_k^{21} \right)\\
    &= \frac{169}{25}\sum_{k\in\Omega_1} \left(c_{1,k}(M_1)\right)^2 \left( m_k^{12}-m_k^{21} \right)\\
    &=  2 \sum_{k\in\Omega_1^+} \left(c_{1,k}(M_1)\right)^2\\
    &= 2\cdot5\cdot\frac{1}{10}\\
    &= 1,
\end{align*}
and thus (1)-(5) are satisfied for $\Omega_1$. To construct $\Omega_2$, replace each vector $k=(k_1,k_2,k_3)$ with $k'=(-k_2, k_1, k_3)$. Repeating the previous steps and taking the minimum of $\epsilon_1$ and $\epsilon_2$ finishes the proof.
\end{proof}
With the choice of frequency modes in hand, we can build the following approximately stationary solutions.

\begin{lemma}[{\textbf{Stationary Solutions}}]\label{stationarysolutions}
For a finite family of vectors $\Omega\in \mathbb{S}^2$ where $\Omega = -\Omega$, $\lambda \in \mathbb{N}$ such that $\lambda\Omega\in\mathbb{Z}^3$, and constants $c_k \in \mathbb{C}$ indexed by $k\in\Omega$ such that $c_k=\overline{c_{-k}}$, define 
$$ \mathbb{V}(x) := \sum_{k\in\Omega} \frac{1}{\lambda} c_k e^{i\lambda k\cdot x}. $$
Then $\mathbb{V}$ is real-valued and there exists $Q$ such that $\grad \cdot \left( \nabla \mathbb{V} \otimes \grad^\perp\mathbb{V} \right)=\curl(Q)$, with $Q$ obeying the bounds
$$ \left\| Q \right\|_{C^0} \lesssim \left\| (\nabla \V)^2 \right\|_{C^0}, \qquad \left\| \curl(Q) \right\|_{C^0} \lesssim \lambda \left\| (\nabla\V)^2 \right\|_{C^0} . $$
The mean of $\nabla\V\otimes\grad^\perp\V$ is given by
$$\frac{1}{2} \nabla \mathbb{V} \otimes \grad^\perp\mathbb{V} = \sum_{k\in \Omega} |c_k|^2 \left(k \otimes \kbar^\perp\right).$$
Furthermore, if $L(z)$ is a smooth function depending only on $z$, then
\begin{align*}
    \grad \cdot \left( \nabla(L\V)\otimes \grad^\perp(L\V) \right) &= L^2 \grad \cdot \left( \nabla \V \otimes \grad^\perp \V \right)\\
    &= \curl \left(L^2 Q\right) - \left( Q^2 \partial_z(L^2), - Q^1 \partial_z(L^2), 0 \right)^t.
\end{align*}
\end{lemma}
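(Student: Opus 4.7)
The plan is to treat the five assertions in sequence, with the curl representation of the nonlinear term being the only point that requires substantive work. Reality of $\V$ is immediate from pairing the modes $k$ and $-k$: by hypothesis $c_{-k}=\overline{c_k}$, so $c_k e^{i\lambda k\cdot x} + c_{-k}e^{-i\lambda k\cdot x} = 2\mathrm{Re}(c_k e^{i\lambda k \cdot x})$.

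For the existence of $Q$, I would first rewrite the nonlinearity in a form that exposes the cancellation. Since $(\grad^\perp\V)_3\equiv 0$ and $\grad\cdot(\grad^\perp\V)=0$, applying the product rule to the row-by-row horizontal divergence yields
\[\grad\cdot(\nabla\V\otimes\grad^\perp\V) = (\grad^\perp\V\cdot\grad)\nabla\V =: A.\]
Since $A$ is itself a horizontal divergence it has zero mean on $\torus$, so to realize it as $\curl Q$ it suffices to show $A$ is 3D divergence-free; then $Q := -\Delta^{-1}\curl A$ satisfies $\curl Q = A$. Writing $u := \grad^\perp\V$, a direct calculation gives
\[\nabla\cdot A = \sum_{i=1}^3 \partial_i u \cdot \grad(\partial_i\V) + u\cdot\grad(\Delta\V).\]
The first sum vanishes pointwise because $\partial_i u_1 = -\partial_i\partial_y\V$ and $\partial_i u_2 = \partial_i\partial_x\V$, so each summand equals $-\partial_i\partial_y\V\cdot\partial_i\partial_x\V + \partial_i\partial_x\V\cdot\partial_i\partial_y\V = 0$. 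For the second term the spherical frequency structure enters: every active mode of $\V$ satisfies $|k|=1$, so $\Delta\V = -\lambda^2\V$, and hence $u\cdot\grad(\Delta\V) = -\lambda^2\grad^\perp\V\cdot\grad\V = 0$ by the pointwise orthogonality of $\grad\V$ and $\grad^\perp\V$. Identifying $\V$ as a Laplace eigenfunction is the only nontrivial ingredient in the whole argument.

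The $C^0$ bounds then follow from Bernstein-type estimates. The field $A$ has frequency support in $\lambda(\Omega+\Omega)\setminus\{0\}$, a finite set whose nonzero elements have norm $\gtrsim \lambda$, so the operator $-\Delta^{-1}\curl$ gains one derivative on its range, giving $\|Q\|_{C^0}\lesssim \lambda^{-1}\|A\|_{C^0}$. Since $\nabla\V$ oscillates at scale $\lambda$, one has $\|A\|_{C^0}\lesssim \|\grad^\perp\V\|_{C^0}\|\nabla^2\V\|_{C^0}\lesssim \lambda\|(\nabla\V)^2\|_{C^0}$, yielding both stated bounds. The mean identity comes from the Fourier expansion of $\nabla\V\otimes\grad^\perp\V$: only $(k,-k)$ pairs contribute to the zero mode, and combining $c_k c_{-k}=|c_k|^2$ with $\overline{(-k)}^\perp = -\bar k^\perp$ produces the claimed sum.

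Finally, for the $L(z)$ identity, $L=L(z)$ forces $\grad^\perp(L\V)=L\grad^\perp\V$, while $\nabla(L\V) = L\nabla\V + L'(z)\V\hat e_3$. Expanding the tensor product, the cross term $LL'\V\hat e_3\otimes\grad^\perp\V$ has only a nonzero third row $LL'\V\grad^\perp\V$, whose horizontal divergence vanishes since $\grad\V\cdot\grad^\perp\V=0$ and $\grad\cdot\grad^\perp\V=0$. Hence $\grad\cdot(\nabla(L\V)\otimes\grad^\perp(L\V)) = L^2\curl Q$, and the second equality is the product rule $\curl(fQ) = f\curl Q + \nabla f\times Q$ applied with $f=L^2$ and $\nabla(L^2)=(0,0,\partial_z L^2)$.
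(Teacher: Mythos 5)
Your proof is correct and follows essentially the same route as the paper: you identify $\V$ as a Laplace eigenfunction, use the eigenfunction identity together with $\grad\V\cdot\grad^\perp\V=0$ to show $\nabla\cdot(\grad\cdot(\nabla\V\otimes\grad^\perp\V))=0$, define $Q$ by inverting the Laplacian, obtain the $C^0$ bounds from Bernstein and the annular frequency support, read off the mean from the $(k,-k)$ Fourier pairings, and verify that $L(z)$ factors out of the nonlinearity by inspecting the cross term in the tensor product. The only cosmetic difference is that you bound $Q$ via $\|A\|_{C^0}\lesssim\lambda\|(\nabla\V)^2\|_{C^0}$ followed by the $\lambda^{-1}$ gain from $(-\Delta)^{-1}\curl$, whereas the paper notes directly that $(-\Delta)^{-1}\circ\curl\circ(\grad\cdot)$ is an order-zero operator bounded on $C^0$; these are equivalent.
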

\begin{proof}
First note that $\mathbb{V}$ is real-valued by the assumptions on $c_k$. Then, we have that
$$ \Delta (c_k e^{i\lambda k\cdot x}) = -\lambda^2 c_k e^{i\lambda k\cdot x}; $$
that is, $\mathbb{V}$ is an eigenfunction of $\Delta$ with eigenvalue $-\lambda^2$. In order to show that $$ \grad \cdot \left( \nabla \mathbb{V} \otimes \grad^\perp\mathbb{V} \right)$$ is the curl of a vector field, it suffices to show that the divergence is zero. Calculating the divergence, we have
$$ \nabla \cdot \left (\grad \cdot \left( \nabla \mathbb{V} \otimes \grad^\perp\mathbb{V} \right)\right) = \grad\nabla\V : \grad^\perp\nabla\V +  \grad^\perp \mathbb{V} \cdot \grad (\Delta \mathbb{V}) = -\lambda^2 \grad^\perp \mathbb{V} \cdot \grad \mathbb{V} = 0.$$
After writing out $\nabla\V$ and $\grad^\perp\V$ in terms of Fourier series with modes $k$ and $k'$, respectively, we can restate this fact in the form of the following algebraic identity which will be crucial later in the paper.  
\begin{align}\label{algebraicidentity}
    \sum_{k,k' \in \Omega} c_k c_{k'} e^{i\lambda(k+k')\cdot x} (i\kbarperpprime \cdot i k) (ik \cdot i(k+k')) \lambda^2 = 0 \qquad \forall x \in \torus.
\end{align}
The bounds on $Q$ come from noticing that $Q$ solves the elliptic equation
$$ Q = (-\Delta)^{-1} \curl \left( \grad \cdot \left( \nabla \V \otimes \grad^\perp \V \right) \right) $$
and using the frequency support of $\V$ in conjunction with \cref{bernstein} to conclude that the singular integral operator $(-\Delta)^{-1} \circ \curl \circ \grad \cdot$ is bounded on $C^0$. By direct calculation, the low frequency portion of $\nabla\V\otimes\grad^\perp\V$  is given as stated.

Given a smooth function $L(z)$, it is clear that
$$\grad \cdot \left( (L \grad V) \otimes \grad^\perp (L\V) \right) = L^2 \grad \cdot \left( \grad \V \otimes \grad^\perp \V \right)$$ since $L$ depends only on $z$. We calculate the third component by writing
\begin{align*}
    \grad \cdot \left( \grad^\perp (L\V) \partial_z(L\V) \right) &= \grad \cdot \left( \grad^\perp (L\V) \partial_z L \V + \grad^\perp (L\V) L \partial_z \V \right)\\
    & = L\partial_z L \left( \grad^\perp \V \cdot \grad \V \right) + L^2 \grad^\perp\V \cdot \grad (\partial_z \V)\\
    &= L^2 \grad \cdot \left(  \partial_z \V \grad^\perp \V \right).
\end{align*}
Therefore, 
\begin{align*}
    \grad \cdot \left( \nabla(L\V)\otimes \grad^\perp(L\V) \right) &= L^2 \grad \cdot \left( \nabla \V \otimes \grad^\perp \V \right)\\
    &= L^2 \curl(Q) \\
    &= \curl (L^2 Q) - \left( Q^2 \partial_z(L^2), - Q^1 \partial_z(L^2), 0 \right)^t
\end{align*}
after recalling that $L$ depends on $z$ only.
\end{proof}

\section{Convex Integration Scheme}\label{Convexintegrationscheme}

\subsection{Inductive Assumptions}
We assume the existence of a triple $(\nabla\Psi_q, Q_q, \M_q)$ solving
\begin{align}\label{inductiveequation}
    \partial_t (\nabla\Psi_q) + \grad\cdot\left( \nabla\Psi_q \otimes \grad^\perp\Psi_q \right) = \curl(Q_q) + \grad \cdot \M_q.
\end{align}
The gradient of the stream function $\nabla\Psi_q$, the curl $Q_q$, and the matrix field $\M_q$ are assumed to be supported in frequency in the set
\begin{align}\label{inductivefrequencysupport}
 \{ k\in \mathbb{Z}^3 : |(k_1,k_2)| \leq \laq , k_3 \in \mathbb{Z}\}
\end{align}
The gradient of the stream function $\nabla\Psi_q$, the curl $Q_q$, and the matrix field $\M_q$ are assumed to be supported in space in the set
\begin{align}\label{inductivespatialsupport}
    \mathbb{T}^2 \times \left[\frac{1}{l_q}, 2\pi - \frac{1}{l_q}\right].
\end{align}
for a strictly positive number $l_q$. We assume that
\begin{align}\label{inductivevelocity}
    \left\| \nabla \Psi_q \right\|_{C^0} \lesssim 1, \qquad \left\| \nabla \Psi_q \right\|_{C^n} \leq  \dq^\frac{1}{2} \laq^n \quad \forall n\geq 1.
\end{align}
We assume that $\Mdot_q$ satisfies 
\begin{align}\label{inductiveerror}
    \left\| \M_q \right\|_{C^0} \leq \eta \dqone, \qquad \left\| \M_q \right\|_{C^1} \leq \dqone \laq.
\end{align}
In addition, we assume that the material derivative of $\M_q$ satisfies
\begin{align}\label{inductivetransport}
    \left\| \left(\partial_t + \grad^\perp\Psi_q \cdot \grad \right) \Mdot_q \right\|_{C^0} \leq \dqone \dq^\frac{1}{2} \laq.
\end{align}
We assume that $Q_q$ satisfies
\begin{align}\label{inductivecurl}
     \qquad \left\| Q \right\|_{C^0} \lesssim 1, \qquad \left\| \nabla Q \right\|_{C^0} \leq \dq \laq.
\end{align}
Concerning the prescribed energy profile $e(t)$, we assume that 
\begin{align}\label{inductiveenergyprofileone}
    0 \leq e(t) - \int_{\torus} |\nabla\Psi_q(t)|^2 \,dx \leq \dqone
\end{align}
and
\begin{align}\label{inductiveenergyprofiletwo}
    e(t) - \int_{\torus} |\nabla\Psi_q(t)|^2 \,dx \leq \frac{\dqone}{8} \Rightarrow \M_q(\cdot,t) \equiv 0.
\end{align}
The bulk of the paper consists of verifying that we can construct a triple $(\nabla\Psi_{q+1}, Q_{q+1}, \M_{q+1})$ satisfying \eqref{inductiveequation}-\eqref{inductiveenergyprofiletwo} with $q$ replaced with $q+1$ and parameters $\dqone<\dq$, $\laqone>\laq$, and $l_{q+1}<l_{q}$, where $\dq,l_q \rightarrow 0$ and $\laq \rightarrow 0$ as $q\rightarrow \infty$ at rates implying the desired level of H\"{o}lder regularity.

\subsection{Velocity Perturbation}
\subsubsection{A Spatial Localizer, Time Partition, Transport}
Define the cutoff function $\Lq$ to be a smooth function depending only on $z$ which satisfies
\begin{align}\label{cutofffunction}
    0 \leq \Lq(z) \leq 1&, \quad \Lq = 1 \quad \forall (x,y,z) \in \mathbb{T}^2\times[\frac{1}{l_{q+1}}, 2\pi - \frac{1}{l_{q+1}}], \\ 
    & \|\partial_z \Lq \|_{C^0} \lesssim {l_{q+1}}, \quad \supp \Lq \subset \mathbb{T}^2 \times [\frac{1}{l_{q+2}}, 2\pi - \frac{1}{l_{q+2}}].  \nonumber
\end{align}

Let $\mathcal{X} \in C_c^\infty\left( (-\frac{3}{4}, \frac{3}{4}) \right)$ be a smooth positive cutoff function such that
$$ \sum_{l \in \mathbb{Z}} \mathcal{X}^2 (x-l) =1 $$
for all $t\in\mathbb{R}$. Let the support of the energy profile $e(t)$ be contained in a ball of radius $R.$ For $\muqone$ a large parameter to be specified later and $l \in \mathbb{Z} \cap [-R\muqone,R\muqone]$, define (we neglect to indicate the dependence on $q$ for ease of notation)
$$ \Xl(t) := \mathcal{X}(\muqone t - l). $$
Define 
\begin{align}\label{definitionofrho}
    \rho(t):= \left( \int_{\torus} \Lq^2 \right)^{-1} \max\left(e(t) - \int_{\mathbb{T}^3} \|\nabla\Psi_q\|^2 - \frac{\dqtwo}{2}, 0  \right), \qquad \rho_l = \rho(\frac{l}{\muqone}).
\end{align}
By the assumption \eqref{inductiveenergyprofileone}, we have that
\begin{align}\label{inductiverhol}
   0 \leq \rho_l \leq \dqone.
\end{align}

Let $\phi_{q}(z)$ be a mollifier in $z$ which is compactly supported in a ball of radius $\ell = \laq^{-\frac{3}{4}}\laqone^{-\frac{1}{4}}$.  Define
$$\M_{q,\ell} = \Mdot_q \ast \phi_{q}$$
so that the spatial support of $\M_{q,\ell}$ is still contained in the region where $\Lq\equiv 1$ and
$$ \left\| \M_{q,\ell} \right\|_{C^0} \leq \eta \dqone, \qquad \left\| \M_{q,\ell} \right\|_{C^1} \leq \dqone \laq, \qquad \left\| \M_{q,\ell} \right\|_{C^n} \leq \dqone\laq \ell^{1-n} \quad \forall n \geq 2. $$
Let $\M_{q,l}$ be the unique solution to the transport equation
\[\begin{dcases}
        \partial_t \M_{q,l} + \grad^\perp \Psi_q \cdot \grad \M_{q,l} = 0 \\
        \M_{q,l}(x, \frac{l}{\muqone}) = \M_{q,\ell}(x, \frac{l}{\muqone}), \\
       \end{dcases}
\]
and set 
$$ M_{q,l} := \rho_l M_j - \M_{q,l} $$
where $M_j$ comes from \cref{frequencymodes}, and $j$ is chosen so that the parity of $l$ and $j$ matches. Next, let $\Phi_l:\mathbb{R}\times\mathbb{R}^3\rightarrow\mathbb{R}^3$ be the solution of 
\[\begin{dcases}
        \partial_t \Phi_{l} + \grad^\perp \Psi_q \cdot \grad \Phi_{l} = 0 \\
        \Phi_{l}\left(x, \frac{l}{\mu}\right) = x \\
       \end{dcases}
\]
so that $\Phi_l(\cdot,t)$ is a diffeomorphism of $\mathbb{T}^3$ onto itself, and for $(t,x)\in\mathbb{R}\times\mathbb{T}^3$ the map 
$$ (x,t) \rightarrow e^{i\lambda_{q+1}k\cdot\Phi_l(x,t)} $$
is well-defined.  

\subsubsection{The Perturbation}

Note that by \cref{inversediv} and \cref{frequencymodes}, $M_{q,l}=\rho_l M_j - \Mdot_{q,l}$ takes values in $\mathcal{M}$.  Let $k\in\Omega=\Omega_1\cup\Omega_2$ denote a chosen frequency mode.  Now 
define
$$ \mathcal{X}_l(t) := \mathcal{X}\left(\muqone(t-l)\right) $$
\[a_{kl}(x,t) :=
\begin{dcases}
   \sqrt[]{\rho_l} c_{j, k} \left( \frac{M_{q,l}(x,t)}{\rho_l} \right) \quad & \textnormal{if } \rho_l \neq 0 \\
  0 & \textnormal{if } \rho_l =0
  \end{dcases}\]
$$  w_{kl}(x,t) := a_{kl}(x,t)e^{i\lambda_{q+1}k\cdot\Phi_l(x,t)} ik.  $$
where $j=1$ and $k\in \Omega_1$ if $l$ is odd, and $j=2$ and $k\in \Omega_2$ if $l$ is even.  We must check that $\akl$ is well-defined when $\rho_l \neq 0$. Therefore we must check that given $\epsilon$ as in \cref{frequencymodes},
$$ \left| \frac{\M_{q,l}}{\rho_l} \right| < \epsilon. $$
Since $\Mdot_{q,l}$ satisfies a transport equation, it suffices to check that 
$$ \left\| \M_{q,\ell}\left( x, \frac{l}{\muqone} \right) \right\|_{C^0} \rho_l^{-1} \leq \epsilon. $$
By \eqref{inductiveenergyprofiletwo}, we have that
\begin{equation}\label{eitheror}
\rho_l \leq \frac{\dqone}{16} \Rightarrow \M_q \equiv 0.
\end{equation}
So we move to the case $\rho_l > \frac{\dqone}{16}$. Then 
$$ \frac{\Mdot_{q,l}}{\rho_l} \leq \frac{\eta \dqone}{\frac{\dqone}{16}} $$
which is less than $\epsilon$ as long as $\eta$ is small enough. $\nabla\W$ is now well-defined by (using the definition of $\Pgradk$ given in \cref{t3operators} and recalling that $\W$ is chosen to have mean zero)
$$ \nabla\W(x,t) := \sum_{l \textnormal{ odd}, k \in \Omega_1} \Pgradk \big{(}\mathcal{X}_l(t) w_{kl}(x,t)\big{)} \quad + \sum_{l \textnormal{ even}, k \in \Omega_2} \Pgradk\big{(}\mathcal{X}_l(t) w_{kl}(x,t)\big{)}. $$
Throughout the rest of the paper, we will simply write
$$  \nabla\W(x,t) = \sum_{l,k} \Pgradk \big{(}\mathcal{X}_l(t) w_{kl}(x,t)\big{)} $$
for the sake of simplicity. The perturbation is then defined by $\nabla(\W \Lq)$.

\subsection{Adding the Perturbation}
Define $\nabla\Psi_{q+1} = \nabla\Psi_q + \pert$. Using that $\nabla\Psi_q$ solves \begin{align*}
    \partial_t (\nabla\Psi_q) + \grad\cdot\left( \nabla\Psi_q \otimes \grad^\perp\Psi_q \right) = \curl(Q_q) + \grad \cdot \M_q,
\end{align*}
we have that $\nabla\Psi_{q+1}$ solves
\begin{align}
    \partial_t (\nabla\Psi_{q+1}) &+ \grad\cdot\left( \nabla\Psi_{q+1} \otimes \grad^\perp\Psi_{q+1} \right) = \curl(Q_q) \nonumber\\
    &+ \partial_t(\pert) + \grad^\perp\Psi_q \cdot \grad \pert \tag{Transport Error}\\
    &+ \pertperp \cdot \grad \nabla\Psi_q \tag{Nash Error}\\
    &+\grad \cdot \left( \pert \otimes \pertperp \right) + \grad \cdot \M_q \tag{Oscillation Error}\\
    &= \curl(Q_{q+1}) + \grad \cdot \left( \Mdot_{q+1} \right). \nonumber
\end{align}
The definition of the matrix field $\Mdot_{q+1}$ and the vector field $Q_{q+1}$ will be specified in the following sections.

\subsection{Choice of Parameters}

We define the parameters $\laq$, $\dq$, $\muqone$, and $l_q$ for all $q\in\mathbb{N}$ in terms of a real number $c>\frac{5}{2}$, a real number $b>1$, and a large integer $a\in 13\mathbb{Z}$.  The numbers $c$, $b$, and $a$ are chosen in that order after first fixing a desired H\"{o}lder regularity level $\zeta \in (0,\frac{1}{5})$. 
$$ \laq := a^{cb^q},\qquad  \dq := a^{-b^q}, \qquad \muqone := \dq^\frac{1}{4} \dqone^\frac{1}{4} \laq^\frac{1}{2} \laqone^\frac{1}{2}, \qquad l_q := \frac{1}{2^{q+1}} $$
In addition, $a$ will be chosen to be large enough to absorb universal constants coming from many of the steps of the argument. We also implement small parameters 
$$0< \alpha \ll \beta \ll 1$$
which are essentially used to control singular integral operators on $L^\infty$ and to quantify the super-exponential growth of the $\laq$'s.
With these choices, the following inequalities hold.

\begin{lemma}[\textbf{Parameter Inequalities}]\label{parameterinequalities}
Given $\laq$, $\dq$, $\muqone$, and $l_q$ as defined above, the following inequalities are true for satisfactory choices of $c$, $b$, $a$, $\beta$, and $\alpha$.
\begin{enumerate}
\item 
$\frac{\dq^\frac{1}{2}\laq}{\muqone} \leq \laqone^{-\beta}$
\item $\muqone \dqone^\frac{1}{2}\leq \dqtwo \laqone.$
\item $l_{q+1}\frac{\dqone}{\laqone}\leq \eta\dqtwo$
\item $\dqone\dq^\frac{1}{2}\laq \leq \dqtwo \dqone^\frac{1}{2}\laqone$
\item $\laq \leq \laqone^{1-\beta}$
\item $\laq^{1+\alpha} \leq \laqone^{1-
\alpha}$
\item  $\ell \lambda_q \leq 1$
\end{enumerate}
\end{lemma}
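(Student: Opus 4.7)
My plan is to take $\log_a$ of each of the seven inequalities and compare exponents, exploiting that $\laq$, $\dq$, and $\muqone$ are pure powers of $a$, while $l_q$ and the factor $2^{q+2}$ appearing through (3) depend only polynomial-exponentially on $q$ and will be absorbed by choosing $a$ large. The parameters are fixed in the order $c \to b \to a \to \beta \to \alpha$, so at each inequality I keep track of which variables remain available for tuning.

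The routine inequalities (1), (5), (6), (7) all reduce, after dividing exponents by $b^q$, to
\begin{align*}
(1)\ \beta c b &\le (b-1)\left(\tfrac{c}{2}-\tfrac{1}{4}\right), \quad (5)\ 1\le b(1-\beta),\\
(6)\ \alpha(1+b) &\le b-1, \quad (7)\ \laq^{1/4}\laqone^{-1/4}\le 1,
\end{align*}
all of which I can enforce after fixing $c>\tfrac{5}{2}$ and $b>1$ by choosing $\beta$ and $\alpha$ small enough depending on $b$ and $c$ (with $\alpha\ll\beta$ compatible); (7) is trivial since $\laq<\laqone$.

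The heart of the lemma lies in (2) and (4). Writing the exponents in terms of $b^q$ and clearing the factor $b^q$, inequality (4) becomes
$$-b^2 + \left(c+\tfrac{1}{2}\right)b + \left(\tfrac{1}{2}-c\right) \ge 0,$$
a polynomial in $b$ that vanishes at $b=1$ with derivative $c-\tfrac{3}{2}$ there; for any $c>\tfrac{3}{2}$ it therefore holds for $b>1$ sufficiently close to $1$. Inequality (2) reduces to
$$4b^2-(2c+3)b+(2c-1)\le 0,$$
which again vanishes at $b=1$, with derivative $5-2c$ there. This is precisely the point where the hypothesis $c>\tfrac{5}{2}$ is essential: it forces the derivative to be strictly negative, so the quadratic is strictly negative for $b>1$ close enough to $1$. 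I expect this to be the main obstacle, as it is the single step that pins down $\tfrac{5}{2}$ in the admissible range for $c$, and hence the H\"{o}lder exponent $\zeta<\tfrac{1}{5}$ in \cref{maintheorem}.

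Finally, inequality (3), after substituting and cancelling powers of $a$, reads $a^{b^{q+1}(b-1-c)}\le \eta\cdot 2^{-(q+2)}$. Because $c>\tfrac{5}{2}$ and $b$ is close to $1$, the exponent $b-1-c$ is strictly negative, so the left-hand side decays doubly-exponentially in $q$ while the right-hand side decays only geometrically; choosing $a$ large (depending on $\eta$, $b$, $c$) verifies (3) uniformly in $q\ge 0$ and simultaneously absorbs the universal constants implicit in the other six inequalities, completing the plan.
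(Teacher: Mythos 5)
Your proof is correct and takes essentially the same approach as the paper: take $\log_a$ of each inequality, divide through by $b^q$, and analyze the resulting polynomial constraints on $b$, $c$, $\beta$, and $\alpha$. Your treatment of (2) and (4) is in fact slightly more explicit than the paper's --- you compute the derivative of the relevant quadratic at $b=1$ and thereby isolate exactly how the hypothesis $c>\tfrac{5}{2}$ enters (the paper simply asserts ``which is true provided $b$ is close enough to $1$'' for (2) and ``proceeds similarly'' for (4)).

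One small slip in (3): after clearing powers of $a$, the inequality should read $a^{b^{q+1}(b-1-c)} \le \eta\, 2^{q+2}$, not $\eta\, 2^{-(q+2)}$; that is, the $2$-power appears on the right-hand side with a positive, not negative, exponent, so the right-hand side grows rather than decays geometrically in $q$. This is a sign error in your favor: you asserted (and proved, via the doubly-exponential decay of the left-hand side for $a$ large) a stronger statement than needed, so the conclusion is still correct, but the written reduction should be fixed.
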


\begin{proof}
Writing out (1), we see that it is satisfied provided that
$$ \frac{a^{-\frac{1}{2}b^q}a^{cb^q}}{a^{-\frac{1}{4}b^q}a^{\frac{1}{2}cb^q}a^{-\frac{1}{4}b^{q+1}}a^{\frac{1}{2}cb^{q+1}}} \leq a^{-\beta cb^{q+1}}.$$
Taking the logarithm in $a$ of both sides and dividing by $b^q$ yields
\begin{align*}
&-\frac{1}{2}+c+\frac{1}{4}+\frac{1}{4}b - \frac{1}{2}c - \frac{1}{2}cb \leq -\beta cb\\
&\iff b\left( \frac{1}{4}-\left(\frac{1}{2}-\beta\right)c\right)+\frac{1}{2}c-\frac{1}{4}\leq 0,
\end{align*}
which is true if $\beta$ is small enough. The second inequality is true provided that
$$ a^{-\frac{1}{4}b^q}a^{\frac{1}{2}cb^q}a^{-\frac{1}{4}b^{q+1}}a^{\frac{1}{2}cb^{q+1}} a^{-\frac{1}{2}b^{q+1}} \leq a^{-b^{q+2}} a^{cb^{q+1}}. $$
Taking logarithms in $a$ and dividing by $b^q$ again gives
\begin{align*}
    &-\frac{1}{4}+\frac{1}{2}c-\frac{1}{4}b+\frac{1}{2}cb-\frac{1}{2}b \leq -b^2 + cb\\
    &\iff b^2 - b\left( \frac{3}{4}+\frac{1}{2}c \right) - \frac{1}{4}+\frac{1}{2}c \leq 0,
\end{align*}
which is true provided $b$ is close enough to $1$.   The inequality in (3) follows from the (merely) exponential growth of $l_{q+1}$.  The proof of (4) proceeds similarly to that of (2).  (5) follows from the super exponential growth of $\laq$ provided $\beta$ is small enough, and (6) follows from (5) provided $\alpha \ll \beta$. The final inequality follows from the definition of $\ell=\lambda_q^{-\frac{3}{4}}\lambda_{q+1}^{-\frac{1}{4}}$.
\end{proof}

\subsection{Inductive Step}
The proofs of \cref{maintheorem} and \cref{maintheorem2d} will require the following inductive propositions.
\begin{prop}[\textbf{3D QG Inductive Proposition}]\label{inductiveproposition}
Let $e(t):\mathbb{R}\rightarrow [0,\infty)$ be a smooth, compactly supported energy profile.  Then given $c>\frac{5}{2}$, there exists $b>1$, $a\gg 1$ such that the following holds.  Given a triple $\left( \nabla\Psi_q, \Mdot_q, Q_q \right)$ satisfying the inductive assumptions \eqref{inductiveequation}-\eqref{inductiveenergyprofiletwo} with parameters $\delta_q$, $\laq$, $l_{q}$ defined in terms of $a$, $b$, and $c$, there exists a new triple $\left( \nabla\Psi_{q+1}, \Mdot_{q+1}, Q_{q+1} \right)$ satisfying \eqref{inductiveequation}-\eqref{inductiveenergyprofiletwo} with $q$ replaced by $q+1$.
\end{prop}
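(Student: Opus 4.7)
The plan is to take $\nabla\Psi_{q+1}=\nabla\Psi_q+\pert$ with the building blocks constructed in the preceding subsection, and then to verify \eqref{inductiveequation}--\eqref{inductiveenergyprofiletwo} at level $q+1$. Three errors appear on the right hand side: a transport error, a Nash error, and an oscillation error which also absorbs $\grad\cdot\M_q$. After applying the inverse divergence $\D$ from \cref{inversediv} to the latter two errors and projecting the transport error onto its curl and divergence parts, we obtain the new matrix field $\Mdot_{q+1}$ and vector field $Q_{q+1}$. The verification then reduces to a sequence of estimates on the amplitudes $\akl$, the phase functions $\Phi_l$, and the oscillatory expressions they produce.

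The first step is to derive $C^n$ bounds on $\akl$ and $\Phi_l$. Since $\akl$ is a composition of the smooth coefficient $c_{j,k}$ with the transported tensor $M_{q,l}/\rho_l$, the bound $\|\akl\|_{C^0}\lesssim\rho_l^{1/2}\lesssim\dqone^{1/2}$ is immediate from \eqref{inductiverhol}, while derivative bounds come from the chain rule together with the mollified estimates on $\M_{q,\ell}$ and the Lagrangian stretching bound $\|\grad\Phi_l-\Id\|_{C^0}\lesssim \muqone^{-1}\dq^{1/2}\laq\leq\laqone^{-\beta}$ supplied by the time support of $\Xl$ and \cref{parameterinequalities}(1). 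These combine to show that $\nabla\W$ is sharply localized in frequency near $\laqone$, giving \eqref{inductivefrequencysupport} at level $q+1$ after accounting for the $z$-frequency spread introduced by $\Lq$, and that \eqref{inductivevelocity} holds for $\nabla\Psi_{q+1}$. The spatial support condition \eqref{inductivespatialsupport} follows from the support of $\Lq$, combined with the fact that multiplication by $\Lq$ commutes with $\Dtq$ and that the kernel of $\D$ acts in $x,y$ only.

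For the oscillation error I would employ \cref{stationarysolutions} and \cref{frequencymodes}. The stationary solutions lemma rewrites $\grad\cdot(\nabla(\Lq\V_l)\otimes\grad^\perp(\Lq\V_l))$ as $\curl(\Lq^2 Q_l)$ modulo a lower-order term from $\partial_z\Lq^2$, and the mean part of $\nabla\V_l\otimes\grad^\perp\V_l$ matches $-\Mdot_q$ up to a controllable remainder by \cref{frequencymodes}(4), the choice $\akl=\sqrt{\rho_l}\,c_{j,k}(M_{q,l}/\rho_l)$, and the design $M_{q,l}=\rho_l M_j-\M_{q,l}$. Cross interactions between different $l$'s with overlapping time supports must pair an odd $l$ with an even $l$, hence involve disjoint frequency sets $\Omega_1$ and $\Omega_2$, and therefore remain high-frequency; these cross terms, together with the high-frequency self-interactions and the mollification error $\M_q-\M_{q,\ell}$, are handled by $\D$, which supplies a factor of $\laqone^{-1}$. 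Combining with the Nash and transport errors, the parameter inequalities \cref{parameterinequalities}(2)--(4) yield $\|\M_{q+1}\|_{C^0}\leq\eta\dqtwo$ and the corresponding $C^1$ bound.

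The main obstacle will be the material derivative estimate \eqref{inductivetransport} at level $q+1$. The strategy is to exploit that many contributions to $\Mdot_{q+1}$ involve factors of the form $\akl e^{i\laqone k\cdot\Phi_l}$ whose phase is exactly annihilated by $\Dtq$, so one need only differentiate the smooth amplitudes. The discrepancy between $\partial_t+\grad^\perp\Psi_{q+1}\cdot\grad$ and $\Dtq$ introduces a commutator involving $\pertperp\cdot\grad$ applied through $\D$, and this must be absorbed by the $\laqone^{-1}$ gain supplied by $\D$ together with \cref{parameterinequalities}(1)--(2). Finally, to verify \eqref{inductivecurl} for $Q_{q+1}$ and the energy identities \eqref{inductiveenergyprofileone}--\eqref{inductiveenergyprofiletwo}, I would use the identity $\sum_k\akl^2=\rho_l$, which follows from \eqref{aklidentity} applied to $M_{q,l}/\rho_l$, together with the definition \eqref{definitionofrho} of $\rho_l$ and the dichotomy \eqref{eitheror}; the extra $\dqtwo/2$ built into $\rho(t)$ ensures that \eqref{inductiveenergyprofiletwo} propagates to level $q+1$, since $\akl$ vanishes identically whenever $\rho_l=0$.
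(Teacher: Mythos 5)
Your proposal follows the same route as the paper: set $\nabla\Psi_{q+1}=\nabla\Psi_q+\nabla(\Lq\W)$, decompose the new error into transport, Nash, and oscillation parts, apply the $x$-$y$ inverse divergence $\D$ and the $\Pgradbar/\Pgradbarperp$ splitting to define $\Mdot_{q+1}$ and $Q_{q+1}$, and verify the inductive bounds via the amplitude estimates, \cref{frequencymodes}, \cref{stationarysolutions}, and the energy increment argument; this is exactly how the paper assembles \cref{transporterror}, \cref{nasherror}, \cref{oscillationhigh}, \cref{oscillationlow}, and \cref{energyincrementlemma}. One small imprecision: for \eqref{inductivetransport} at level $q+1$ the paper's lemmas bound $\Dtq\Mdot_{q+1}$ (material derivative along $\grad^\perp\Psi_q$), and passing to $\partial_t+\grad^\perp\Psi_{q+1}\cdot\grad$ costs an extra term $\grad^\perp(\Lq\W)\cdot\grad\Mdot_{q+1}$ which is bounded directly by $\|\grad^\perp(\Lq\W)\|_{C^0}\|\Mdot_{q+1}\|_{C^1}\lesssim\dqone^{1/2}\dqtwo\laqone$, not by a commutator with $\D$ as you suggest — but the numerology still closes, so this does not affect the argument.
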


\begin{prop}[\textbf{2D Euler Inductive Proposition}]\label{eulerinductiveproposition}
With the additional assumption that the matrix field $\Mdot_q$ is of the block form
$$ \Mdot_q = \left[ {\begin{array}{ccc}
m_1 & m_2 & 0\\
m_2 & -m_1 & 0\\
0 & 0 & 0\\
\end{array}} \right] $$
and the elimination of any restrictions on the spatial support, the outcome of \cref{inductiveproposition} can be achieved while simultaneously prescribing that $$\partial_z\left( \Psi_{q+1}-\Psi_q  \right) \equiv 0.$$
In particular, if $\Mdot_1$ is of such a block form, then one can impose that $\partial_z\Psi_q \equiv 0$ for all $q\in\mathbb{N}$.
\end{prop}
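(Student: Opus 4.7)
The plan is to re-execute the scheme of \cref{inductiveproposition} under two hypothesis-driven simplifications: set the localizer $\Lq \equiv 1$ throughout, permissible since no spatial-support constraints remain; and restrict every frequency mode chosen in \cref{frequencymodes} to the equator $\{k_3 = 0\}$. The first task is therefore to establish a planar analogue of \cref{frequencymodes}: construct disjoint finite sets $\Omega_j \subset (\mathbb{Q}^3 \cap \mathbb{S}^2) \cap \{k_3 = 0\}$ with $\Omega_j = -\Omega_j$ and $13\Omega_j \subset \mathbb{Z}^3$, base matrices $M_j$ in the 2D block form, and smooth coefficients $c_{j,k}$ such that the symmetric part of $\tfrac{1}{2}\sum_{k \in \Omega_j} c_{j,k}(M)^2\, k \otimes \bark^\perp$ equals $M$ for every $M$ in a neighborhood of $M_j$ inside the 2D block form subspace. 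Writing $k = (\cos\theta, \sin\theta, 0)$, the symmetric part of $k \otimes \bark^\perp$ traces a circle of radius $\tfrac{1}{2}$ in the $(m_1, m_2)$-plane parameterizing the block form, and four rational angles drawn from the Pythagorean triple $(5, 12, 13)$ give directions whose positive combinations span a neighborhood of the origin; the inverse function theorem argument of \cref{frequencymodes} then produces $M_j$ and $c_{j,k}$ essentially verbatim.

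With these planar modes and $\Lq \equiv 1$, define $\akl$, $\Phi_l$, $\wkl$, and the perturbation $\nabla\W$ just as in Section 3.2, and set $\Psi_{q+1} = \Psi_q + \W$. Inductively assuming $\partial_z\Psi_q \equiv 0$, the velocity $\grad^\perp\Psi_q$ is $z$-independent and tangent to the level sets $\{z = \text{const}\}$, so the flow $\Phi_l$ fixes the third coordinate and its first two components depend only on $(x,y,t)$. Combined with $k_3 = 0$, every phase $e^{i\laqone k\cdot\Phi_l}$ is independent of $z$; since $M_{q,l}$ is transported along a $z$-independent flow from the $z$-independent datum $\Mdot_{q,\ell}(\cdot, l/\muqone)$, the amplitudes $\akl$ are $z$-independent as well. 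Consequently $\partial_z\W \equiv 0$, which establishes $\partial_z(\Psi_{q+1}-\Psi_q) \equiv 0$.

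To confirm that $\Mdot_{q+1}$ inherits the 2D block form, inspect the three error vector fields to which $\D$ is applied. Each has vanishing third component: the Transport and Nash errors differentiate $\partial_z\W = 0$ or $\partial_z\nabla\Psi_q = 0$, and the Oscillation error's third component vanishes because $\partial_z\W = 0$ kills the third row of $\nabla\W\otimes\grad^\perp\W$ and the third row of $\Mdot_q$ is zero by hypothesis. By \cref{inversediv}, $\D$ applied to a vector with zero third component outputs a matrix with zero third row and a symmetric traceless top $2\times 2$ block (built via $\E$). The only remaining subtlety is the antisymmetric residue of the low-frequency mean of $\nabla\W\otimes\grad^\perp\W$: when $k_3 = 0$, the antisymmetric part of $k\otimes\bark^\perp$ is $\tfrac{|k|^2}{2}J$ with $J = \left(\begin{smallmatrix} 0 & 1 \\ -1 & 0 \end{smallmatrix}\right)$, so its divergence is $\grad^\perp$ of a scalar, a pure 2D curl, and is absorbed into $\curl(Q_{q+1})$ rather than $\grad\cdot\Mdot_{q+1}$. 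All quantitative estimates \eqref{inductiveequation}--\eqref{inductiveenergyprofiletwo} go through unchanged, and iterating from $\Mdot_1$ in block form (with $\Psi_1 \equiv 0$) preserves both the 2D block form of $\Mdot_q$ and the identity $\partial_z\Psi_q \equiv 0$ for all $q$.

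The principal obstacle is the final reconciliation: the low-frequency mean of $\nabla\W\otimes\grad^\perp\W$ is generically non-symmetric in its top $2\times 2$ block, while $\Mdot_q$ is assumed symmetric, so one must identify the antisymmetric residue as a pure curl in order to preserve the block form across the iteration. The identity $\mathrm{antisym}(k\otimes\bark^\perp) = \tfrac{|k|^2}{2}J$ for $k_3 = 0$ is the essential algebraic input that makes this possible and reconciles the symmetric 2D Euler stress with the non-symmetric output of the geometric lemma.
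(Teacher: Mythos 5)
Your proposal follows the same basic route as the paper's (very terse) proof — set $\Lq \equiv 1$, choose all frequency modes in $\{k_3 = 0\}$, and check by induction that the block form and $z$-independence propagate — and it correctly fills in the details the paper omits, most importantly the need for a planar replacement of \cref{frequencymodes}. One place where your route diverges is worth flagging. You reformulate the geometric lemma to match only the \emph{symmetric} part of $\tfrac12\sum c_{j,k}^2\,k\otimes\bark^\perp$ to a symmetric $M$ in the 2D block form, and then declare the antisymmetric remainder a ``principal obstacle'' to be absorbed into $\curl(Q_{q+1})$. The paper's implicit version does not create this residue: it matches the \emph{full} matrix to $M \in \{m_4 = m_5 = 0\}$ (a 3-parameter space, so one needs three linearly independent $k_i \otimes \bark_i^\perp$ with $k_3=0$, e.g.\ $(1,0,0)$, $(0,1,0)$, $(3/5,4/5,0)$), and the antisymmetric part is simply carried inside the constant matrix $M_j$, whose divergence vanishes. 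Moreover, since every $k\otimes\bark^\perp$ with $k_3=0$ satisfies $m^{12}_k - m^{21}_k = |\bark|^2 = 1$, the argument for item (5) of \cref{frequencymodes} goes through verbatim and forces $\sum_k c_{j,k}^2 = 1$ whenever $M - M_j$ has symmetric top block — which is exactly the case here, because $\Mdot_{q,l}$ is transported from an initial datum built by $\D$ and hence always symmetric. Consequently the ``residue'' $\tfrac14(\sum c^2)J$ in your formulation is in fact a constant multiple of $J$ and already divergence-free; the absorption into $\curl(Q_{q+1})$, while valid, is not needed. Put differently, your obstacle is an artifact of matching only the symmetric part; the paper's choice of a non-symmetric constant $M_j$ makes the antisymmetric bookkeeping automatic. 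The rest of your argument — $z$-independence of the flow, the amplitudes, the phases, and hence of $\W$; the vanishing of the third components of the transport, Nash, and oscillation error vector fields; and the preservation of the block form under $\D$ — is correct and matches the paper's intent.
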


\section{Error Estimates}\label{Errorestimates}
Before estimating the transport, Nash, and oscillation errors, we show the following bounds on the perturbation and $\nabla\Psi_q$. Aside from the minor adjustment needed to prove (6) due to the spatial localizer, the following estimates all of course have exact analogues in previous convex integration arguments (for example Lemmas 4.3 and 4.4 from \cite{bsv16}). Any usage of the symbol $\lesssim$ indicates dependence only on universal constants (in particular not on $q$) and will be rectified later by a large choice of the parameter $a$.

\begin{lemma}[{\textbf{Preliminary Estimates}}]\label{help}
Using the definitions given in the previous section for each function and parameter, the following hold. 
\begin{enumerate}
    \item $\left\| \nabla^k \akl \right\|_{C^0(\supp \Xl)} \lesssim \dqone^\frac{1}{2}\laq \ell^{k-1}$ for $k\in\mathbb{N}$.
    \item For $t\in \supp \mathcal{X}_l$, $ \left\| D\Phi_l - \Id \right\|_{C^0} \lesssim \frac{\dq^\frac{1}{2}\laq}{\muqone}$ and $\left\| \nabla^N \Phi_l \right\|_{C^0} \lesssim \frac{\dq^\frac{1}{2}\laq^N}{\muqone}$ when $N\geq 2$.
    \item $\| \nabla e^{i\laqone(\Phi_l - x)\cdot k} \|_{C^0(\supp \Xl)} \lesssim \laqone^{1-\beta} $ and $\left\| \nabla^k e^{i\laqone(\Phi_l - x)\cdot k} \right\|_{C^0} \lesssim \laqone^{k\left(1-\beta\right)}$ for $k\in\mathbb{N}$.
    \item $\left\| \Dtq \left( \nabla \Psi_q \right) \right\|_{C^0} \lesssim \dq\laq $.
    \item $\|\wkl\|_{C^1(\supp \Xl)} \lesssim \dqone^\frac{1}{2}\laqone$.
    \item $\left\| \nabla\left( \Lq \W \right) \right\|_{C^n} \lesssim \dqone^\frac{1}{2}\laqone^n $.
\end{enumerate}
\end{lemma}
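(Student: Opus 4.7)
The plan is to dispatch items (2) and (4) first, since the other four estimates all rely on bounds for the flow map $\Phi_l$ and on the regularity of $\nabla\Psi_q$. For (2), I differentiate the defining transport equation for $\Phi_l$ in $x$ to obtain a transport equation for $D\Phi_l - \Id$ with source $-D(\grad^\perp\Psi_q)\cdot D\Phi_l$ and zero data at $t = l/\muqone$. A direct Gr\"onwall / integration argument on $\supp\Xl$, using the inductive bound $\|\nabla^2\Psi_q\|_{C^0}\lesssim\dq^{\frac{1}{2}}\laq$ from \eqref{inductivevelocity} together with the fact that the support has length $\lesssim\muqone^{-1}$, yields the claim; higher-derivative bounds $\|\nabla^N\Phi_l\|_{C^0}\lesssim\dq^{\frac{1}{2}}\laq^N/\muqone$ follow by inducting on $N$. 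For (4), I use \eqref{inductiveequation}. Because $\grad^\perp\Psi_q$ is divergence-free in $(x,y)$, we have $\grad\cdot(\nabla\Psi_q\otimes\grad^\perp\Psi_q) = \grad^\perp\Psi_q\cdot\grad(\nabla\Psi_q)$, so $\Dtq(\nabla\Psi_q) = \curl(Q_q) + \grad\cdot\M_q$. Combining \eqref{inductivecurl} and \eqref{inductiveerror} gives the bound $\lesssim\dq\laq$, after noting $\dqone\laq\leq\dq\laq$.

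For (1), on the set where $\akl\not\equiv 0$ the dichotomy \eqref{eitheror} forces $\rho_l\gtrsim\dqone$, so the argument $M_{q,l}/\rho_l$ of $c_{j,k}$ stays in $B_{\epsilon}(M_j)$ and Fa\`a di Bruno reduces the claim to bounds on $\nabla^n M_{q,l} = -\nabla^n\M_{q,l}$. Since $\M_{q,l}$ is transported by $\grad^\perp\Psi_q$ from the mollified initial datum $\M_{q,\ell}(\cdot,l/\muqone)$, the chain rule expresses $\nabla^n\M_{q,l}$ in terms of derivatives of $\M_{q,\ell}$ and of $\Phi_l$; inserting the mollifier bounds $\|\M_{q,\ell}\|_{C^n}\lesssim\dqone\laq\ell^{1-n}$, the flow-map estimates from (2), and the lower bound on $\rho_l$ yields the stated estimate. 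For (3), I write $\nabla e^{i\laqone(\Phi_l - x)\cdot k} = i\laqone(D\Phi_l - \Id)^T k\, e^{i\laqone(\Phi_l - x)\cdot k}$; the first bound is immediate from (2) combined with parameter inequality \eqref{parameterinequalities}(1), and the higher-order bounds follow by iterating Fa\`a di Bruno on the exponential, each new derivative contributing either a factor $\laqone(D\Phi_l-\Id)$ (controlled by \eqref{parameterinequalities}(1)) or a higher derivative of $\Phi_l$ (controlled by (2)).

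Item (5) is obtained by applying $\nabla$ to $\wkl = \akl\, e^{i\laqone k\cdot\Phi_l}\, ik$ via Leibniz: the derivative of the exponential produces the dominant term of size $\lesssim\dqone^{\frac{1}{2}}\laqone$, while the other contribution from $\nabla\akl$ is smaller by a factor $\laq/\laqone$ using (1). For (6), I Leibniz-expand $\nabla^n(\Lq\W)$; derivatives landing on $\Lq$ cost only $l_{q+1}$ and are negligible against derivatives on $\W$ which cost $\laqone$. For the main piece $\nabla^n\W$, Definition \ref{t3operators}(4) tells us that $\Pgradk$ outputs a single plane wave at frequency $\laqone k$, so $\|\nabla^n\Pgradk(\Xl\wkl)\|_{C^0}\lesssim\laqone^n\|\Xl\wkl\|_{C^0}\lesssim\dqone^{\frac{1}{2}}\laqone^n$; summing over the finite index set $\Omega_1\cup\Omega_2$ and the $O(1)$ overlapping $\Xl$'s of matching parity closes the estimate. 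The main obstacle is the combinatorial bookkeeping in (1), where every term in the Fa\`a di Bruno expansion must be controlled uniformly in $n$ by trading mollifier gains (powers of $\ell$) against flow-map losses (powers of $\dq^{\frac{1}{2}}\laq/\muqone$); parameter inequality \eqref{parameterinequalities}(1) is precisely what keeps the latter subordinate and allows the $p=1$ term of Fa\`a di Bruno to dominate.
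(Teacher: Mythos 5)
Your proposal is correct and follows essentially the same route as the paper: the dichotomy from \eqref{eitheror} to control $\akl$, chain rule and transport estimates for $\Phi_l$ and $\M_{q,l}$, and Bernstein-type counting of $\laqone$ factors for $\nabla(\Lq\W)$. The only cosmetic difference is in item (5), where you differentiate $\wkl=\akl e^{i\laqone k\cdot\Phi_l}ik$ directly by the product rule while the paper instead invokes the transport estimate via $\Dtq\wkl=0$ — both yield the same bound from the same inputs.
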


\begin{proof} 
\begin{enumerate} 
\item Using the chain rule estimates in \cref{chainrule}, we can write
\begin{align*}
    \left\| \nabla \akl \right\|_{C^0(\supp \mathcal{X}_l)} &= \left\| \nabla \left( \sqrt{\rho_l} c_{j,k}\left(\frac{M_{q,l}}{\rho_l}\right) \right) \right\|_{C^0}\\
    &\leq \sqrt{\rho_l}\left( \| \nabla c_{j,k} \|_{C^0} \| \nabla M_{q,l} \|_{C^0} \rho_l^{-1} \right).
\end{align*}
We first note that either $M_{q,l}$ is constant or $\rho_l \geq \frac{\dqone}{16}$ (observed in \eqref{eitheror}), and thus we can reduce to the case that $\rho_l \geq \frac{\dqone}{16}$. Then to estimate $\nabla M_{q,l}$, we use that $\M_{q,l}$ solves the equation
$$  \Dtq \M_{q,l} = \partial_t \M_{q,l} + \grad^\perp \Psi_{q,l} \cdot \grad \M_{q,l} = 0.  $$
Then the transport estimates \cref{transport}, the inductive assumptions \eqref{inductivevelocity} and \eqref{inductiveerror}, and the parameter assumptions from \cref{parameterinequalities} yield
\begin{align*}
    \left\| \nabla \akl \right\|_{C^0(\supp \mathcal{X}_l)}
&\leq C(c_{j,k}) \sqrt{\rho_l} \left( \frac{\eta \dqone \laq}{\rho_l} \right)\\
    &\lesssim \dqone^\frac{1}{2}\laq.
\end{align*}
For the second bound, arguing as before and using the $C^k$ bounds on $\nabla\Psi_q$ and $\M_{q,\ell}$, and therefore $\Mdot_{q,l}$ and $M_{q,l}$, gives the claim.
\item Applying \cref{chainrule} and the transport estimates in \cref{transport} , we have that 
$$ \| D\Phi_l - \Id \|_{C^0} \leq (t-t_0)\|\grad\grad^\perp\Psi_q\|_{C^0}e^{(t-t_0)\|\grad\grad^\perp \Psi_q\|_{C^0}} \leq \frac{\dq^\frac{1}{2}\laq}{\muqone}. $$
The last estimate follows again from \cref{transport} and the $C^n$ bounds of the velocity $\grad^\perp\Psi_q$. 
\item We can use (2) and \cref{chainrule} to calculate
\begin{align*}
    \left\| \nabla \left( e^{i\laqone (\Phi_l(x,t)-x)\cdot k} \right) \right\|_{C^0({\supp \mathcal{X}_l})} &\leq \left( \| \nabla e^{ix} \|_{C^0} \left\| \nabla\left(i\laqone k\cdot (\Phi_l -x)\right) \right\|_{C^0} \right) \\
    &\leq \laqone \left\| D\Phi_l - \Id \right\|_{C^0} \\
    &\leq \laqone \frac{\dq^\frac{1}{2}\laq}{\muqone}\\
    &\leq \laqone^{1-\beta}.
\end{align*}
The second claim follows from the $C^n$ bounds on $\grad^\perp\Psi_q$, the chain rule \cref{chainrule}, and the transport estimates \cref{transport}.
\item We have that $\nabla\Psi_q$ satisfies the transport equation
$$ \partial_t(\nabla\Psi_q) + \grad^\perp\Psi_q \cdot \grad \nabla\Psi_q = \grad Q_{3,p} + \grad\cdot \M_q. $$
By the inductive assumptions \eqref{inductiveerror} and \eqref{inductivecurl}, 
$$ \| \grad Q_{3,p} \| \leq \dq \laq , \qquad \| M_q \|_{C^1} \leq 4 \eta \dqone \laq $$
which yields the claim since $\dqone \leq \dq$.
\item Using that $\Dtq\wkl=0$ and that $\wkl = \akl \expk ik$ at $t=\frac{l}{\muqone}$, we apply \cref{transport} to obtain that
$$ \|\wkl\|_{C^1} \leq \left( \|\akl\|_{C^1} + \|\akl\|_{C^0} \laqone \right)e^{\frac{\|\grad^\perp\Psi_q\|_{C^1}}{\muqone}} \leq \dqone^\frac{1}{2} \laq + \dqone^\frac{1}{2}\laqone, $$
proving the result.
\item Applying the Leibniz rule to $\nabla\left( \Lq \W \right)$, using the compact frequency support of $\nabla\W$, and noticing that $\nabla^n \Lq = (\partial_z)^n \Lq \ll \laqone^n$ due to the fact that $l_{q+2} \ll \laqone$ gives the claim.
\end{enumerate}

\end{proof}

\subsection{Transport Error}
\begin{lemma}\label{transporterror}
The transport error
$$ \partial_t\left(\pert\right) + \grad^\perp \Psi_q \cdot \grad \pert $$
is equal to
$$ \curl\left({Q}_{T}\right) + \grad \cdot \M_{T} $$
with the estimates
$$ \| {Q}_{T} \|_{C^0} \leq \dqone, \qquad \| {Q}_{T} \|_{C^1} \leq \dqone \laqone $$
$$ \| \M_{T} \|_{C^0} \leq \eta\dqtwo, \qquad \| \M_T \|_{C^1} \leq \dqtwo \laqone, \qquad \| \Dtq \M_T \|_{C^0} \leq \dqtwo \dqone^\frac{1}{2} \laqone. $$
Furthermore, $Q_{T}$ and $\M_{T}$ are supported in the set $$\mathbb{T}^2 \times \left[ \frac{1}{l_{q+1}}, 2\pi - \frac{1}{l_{q+1}} \right].$$
\end{lemma}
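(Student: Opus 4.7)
The plan is to decompose $\Dtq \nabla(\Lq \W)$, apply the inverse divergence $\D$ from \cref{inversediv} to write the result as $\grad \cdot \M_T$, and take $Q_T \equiv 0$. Since $\Lq$ depends only on $z$ and $\grad^\perp \Psi_q$ has vanishing $z$-component, $\Dtq \Lq = 0$ and $\Dtq \nabla \Lq = 0$, so the Leibniz rule reduces the transport error to
\begin{equation*}
\Dtq \nabla(\Lq \W) \;=\; \Lq\, \Dtq \nabla \W \;+\; (\Dtq \W)(\partial_z \Lq)\hat z.
\end{equation*}

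Expanding $\nabla \W = \sum_{l,k}\Xl \Pgradk(\wkl)$, using the time-only character of $\Xl$ and the identity $\Dtq \wkl = 0$ (which follows from the construction of $\akl$ and $\Phi_l$ as transported objects), I further split
\begin{equation*}
\Dtq \nabla \W \;=\; \sum_{l,k}(\partial_t \Xl)\Pgradk(\wkl) \;+\; \sum_{l,k}\Xl\,[\Dtq,\Pgradk]\wkl.
\end{equation*}
The first sum is the dominant contribution, of $C^0$-size $\lesssim \muqone \dqone^{\frac{1}{2}}$, and it is a gradient $\nabla F$ of a scalar because each $\Pgradk(\wkl)$ is a gradient and $\partial_t \Xl$ depends only on $t$; moving $\Lq$ inside via $\Lq \nabla F = \nabla(\Lq F) - F(\partial_z \Lq)\hat z$ preserves the form $(\partial_x f, \partial_y f, g)$ on which $\D$ acts. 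The cutoff correction and the commutator piece likewise lie in this form. Setting $\M_T := \D(\Dtq \nabla(\Lq \W))$, the gain $\laqone^{-1}$ from \cref{inversediv} combined with parameter inequality~(2) of \cref{parameterinequalities} gives $\|\M_T\|_{C^0} \lesssim \muqone \dqone^{\frac{1}{2}}/\laqone \leq \dqtwo$; since inequality~(2) is strict for $b>1$, the slack provides a factor $a^{-\varepsilon b^{q+1}}$ absorbing the implicit constant and yielding $\|\M_T\|_{C^0} \leq \eta \dqtwo$ for $a$ chosen large. The $C^1$ bound costs one further factor of $\laqone$ from the $(x,y)$-frequency support at scale $\laqone$; the $\Dtq \M_T$-bound is obtained by a second application of $\Dtq$, with the dominant contribution $\partial_t^2 \Xl \sim \muqone^2$ closed via parameter inequality~(4) and the material-derivative bound for $\nabla \Psi_q$ from \cref{help}~(4). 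For the support: $\Lq$ is supported in $\mathbb{T}^2 \times [1/l_{q+2}, 2\pi - 1/l_{q+2}]$, and since $\D$ is a convolution in $x$ and $y$ only, $\M_T$ inherits this $z$-support, which lies in $[1/l_{q+1}, 2\pi - 1/l_{q+1}]$ because $l_{q+2} < l_{q+1}$.

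The main obstacle I expect is controlling the commutator piece $[\Dtq,\Pgradk]\wkl = \Dtq \Pgradk(\wkl)$: a naive estimate treating $\Pgradk(\wkl)$ as a single-mode wave at frequency $\laqone$ gives a $C^0$-norm $\sim \laqone \dqone^{\frac{1}{2}}$, which after $\D$ produces a contribution of order $\dqone^{\frac{1}{2}}$ to $\|\M_T\|_{C^0}$ that is far too large. The resolution is to exploit that $\Pgradk(\wkl) = \nabla \gamma_{kl}$ is already a gradient of a single-mode scalar $\gamma_{kl}$ of amplitude $\sim \dqone^{\frac{1}{2}}/\laqone$, and to apply $\Dtq \nabla = \nabla \Dtq - (\nabla \grad^\perp \Psi_q)^T \grad$. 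This converts the commutator into a gradient $\nabla(\Dtq \gamma_{kl})$ (handled by $\D$ as before) plus a lower-order correction carrying the factor $\|\nabla \grad^\perp \Psi_q\|_{C^0} \lesssim \dq^{\frac{1}{2}}\laq$; together with the $\D$-gain and parameter inequality~(1), the correction is bounded by $\dq^{\frac{1}{2}}\dqone^{\frac{1}{2}}\laq/\laqone \leq \dqone^{\frac{1}{2}}\laqone^{-\beta}$, which is strictly subdominant and closes the estimate.
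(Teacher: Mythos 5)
Your decomposition of the transport error is correct, and in fact more careful than the paper's own on one small point: you write the $z$-cutoff piece as $(\Dtq \W)(\partial_z\Lq)\hat z$, whereas the paper writes $(\partial_t\W)\partial_z\Lq$ in that slot (a harmless typo, since the estimate proceeds the same way). However, your central claim — that one can take $Q_T\equiv 0$ and set $\M_T := \D(\Dtq\nabla(\Lq\W))$ — is a genuine gap, and the obstruction is exactly in the commutator piece that you flagged as the main obstacle.

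Recall from \cref{inversediv} that $\D$ is only defined on vector fields of the form $\X=(\partial_x f,\partial_y f,g)$, i.e.\ whose first two components form a horizontal gradient $\grad f$ of a scalar. After your identity $\Dtq\nabla\gamma_{kl} = \nabla(\Dtq\gamma_{kl}) - (\nabla\grad^\perp\Psi_q)^T\grad\gamma_{kl}$, the full-gradient part $\nabla(\Dtq\gamma_{kl})$ is indeed of this form, but the correction term is not: writing $u=\grad^\perp\Psi_q$, the two-dimensional curl of its first two components is
\begin{equation*}
\partial_1\Big(\sum_{j}\partial_2 u^j\,\partial_j\gamma_{kl}\Big) - \partial_2\Big(\sum_{j}\partial_1 u^j\,\partial_j\gamma_{kl}\Big)
= -\partial_{22}\Psi_q\,\partial_{11}\gamma_{kl} + 2\partial_{12}\Psi_q\,\partial_{12}\gamma_{kl} - \partial_{11}\Psi_q\,\partial_{22}\gamma_{kl},
\end{equation*}
which is not identically zero. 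So this term is not a horizontal gradient, $\D$ does not apply to it, and the whole transport error cannot be written purely as $\grad\cdot\M_T$. Your size estimate on the correction is fine, but size is not the issue — the issue is the algebraic structure $\D$ requires. The statement of the lemma deliberately allows $Q_T\neq 0$ with only the weaker bound $\|Q_T\|_{C^0}\leq\dqone$ (compared to $\|\M_T\|_{C^0}\leq\eta\dqtwo$) precisely to accommodate this: one Helmholtz-decomposes the transport error via $\Pgradbar$ and $\Pgradbarperp$, sends $\D\Pgradbar(\cdot)$ into $\M_T$, and absorbs $\Pgradbarperp(\cdot)$ (which has vanishing third component and horizontal-curl first two components, hence is $\curl$ of a vector field) into $Q_T$ after inverting $\grad^\perp$. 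This is exactly how the paper proceeds. Your commutator identity $\Dtq\nabla = \nabla\Dtq - (\nabla\grad^\perp\Psi_q)^T\grad$ is a valid alternative to the paper's use of the singular-integral commutator estimate \eqref{commutatorbsv1} for bounding the $C^0$ size, but it does not remove the need for the $\Pgradbar$/$\Pgradbarperp$ split and a nonzero $Q_T$. You would also need to redo the $C^1$ and $\Dtq\M_T$ estimates with this split in place.
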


\begin{proof}
By the compact support in $x$ and $y$ frequency modes of $\nabla\Psi_q$ and the support in frequency of $\nabla\left(L\W\right)$ in a cylinder whose base is an annulus in $x$ and $y$ centered around $\laqone$, the $x$ and $y$ frequency modes of the transport error are supported in the cylinder above an annulus of radius $\laqone$ in $\mathbb{Z}^2$.  Therefore, we can apply the $x$ and $y$ frequency localizer $\Pbarqone$ and \cref{flatfrequencylocalizers} to write the transport error as
\begin{align*}
\Pbarqone &\left( \partial_t(\pert) + \grad^\perp \Psi_q \cdot \grad \pert \right)\\
&= \Pbarqone \left(\Lq \Dtq \nabla \W + \left( 0, 0, \partial_z \Lq  \partial_t \W \right)^t \right)\\
&:= M_{T,1} + M_{T,2}.
\end{align*}
Beginning with $M_{T,1}$, we can commute $\Lq$ and $\Pbarqone$ and introduce the commutator $\left[\Dtq, \Pgradk\right]$ to write
\begin{align*}
    \left\| M_{T,1} \right\|_{C^0} &\leq \left\| \Pbarqone  \left( \left[ \Dtq, \Pgradk \right] (\Xl\wkl) + \partial_t \Xl \wkl \right) \right\|_{C^0}  \\
    &\leq \left\| \Pbarqone \sum_{k,l}\left[ \grad^\perp\Psi_q \cdot \grad, \Pgradk \right](\wkl \Xl)\right\|_{C^0} + \left\| \Pbarqone \sum_{k,l} \Pgradk\left(\partial_t \Xl \wkl\right) \right\|_{C^0} \\
    &\lesssim  \left\| \grad^\perp\Psi_q \right\|_{C^1} \left\| \wkl \Xl \right\|_{C^0} + \left\| \partial_t \Xl \akl e^{i\laqone \Phi_l \cdot x} \right\|_{C^0}  \\
    &\lesssim  \dq^\frac{1}{2} \laq \dqone^\frac{1}{2} + \muqone \dqone^\frac{1}{2} \\
    &\lesssim \muqone \dqone^\frac{1}{2}\\
    &\leq \eta \dqtwo \laqone
\end{align*}
after applying the commutator estimate \eqref{commutatorbsv1}. We then decompose $M_{T,1}$ using $\Pgradbar$ and $\Pgradbarperp$ as 
$$ M_{T,1} = \Pgradbar \left( M_{T,1} \right) + \Pgradbarperp\left( M_{T,1} \right). $$
After applying $\D$, we can absorb the first piece into $\Mdot_T$, while the second piece becomes part of $Q_T$ after inverting $\grad^\perp$. The desired $C^0$ bounds on $M_T$ and $Q_T$ follow from the presence of the frequency localizer $\Pbarqone$, the fact that $\D$ and $\left(\grad^\perp\right)^{-1}$ are operators of order $-1$ in $x$ and $y$, and an application of \cref{bernstein}. To show the $C^1$ bounds, we write 
\begin{align*}
    \nabla \left( \left[ \grad^\perp\Psi_q \cdot \grad, \Pgradk \right]\left(\Xl \wkl\right) \right) &= \left[ \nabla\grad^\perp\Psi_q \cdot \grad, \Pgradk \right](\Xl\wkl) + \left[ \grad^\perp\Psi_q \cdot \grad, \Pgradk \right] \left( \nabla \wkl \Xl\right).
\end{align*}

\begin{align*}
    \left\| \nabla M_{T,1} \right\|_{C^0} &\leq \left\| \nabla \Lq \Dtq \nabla\W \right\|_{C^0} + \left\| \Lq \nabla \left( \Dtq \nabla\W \right) \right\|_{C^0}\\
    &\lesssim \left\| \partial_z \Lq \right\|_{C^0} \left\| \Dtq \nabla\W \right\|_{C^0} + \left\| \left[ \nabla\grad^\perp\Psi_q \cdot \grad, \Pgradk \right](\Xl\wkl) \right\|_{C^0}\\
    &\qquad + \left\| \left[ \grad^\perp\Psi_q \cdot \grad, \Pgradk \right] \left( \nabla \wkl \Xl\right) \right\|_{C^0} + \left\| \Pgradk \left( \partial_t \Xl \nabla\wkl \right) \right\|_{C^0}\\
    &\leq l_{q+1} \frac{\dqtwo^2}{\dqone}\laqone + \left\| \grad^\perp\Psi_q \right\|_{C^2} \| \Xl \wkl \|_{C^0} + \| \grad^\perp\Psi_q \|_{C^1} \| \Xl \wkl \|_{C^1} + \left\| \Pgradk \left( \partial_t \Xl \nabla\wkl \right) \right\|_{C^0}\\
    &\leq  \dqtwo \laqone + \dq^\frac{1}{2}\laq^2 \dqone^\frac{1}{2} + \dq^\frac{1}{2}\laq\dqone^\frac{1}{2}\laqone + \muqone \dqone^\frac{1}{2}\laqone\\
    &\leq \dqtwo \laqone^2.
\end{align*}
Then using the fact that differentiating and multiplying by $\Lq$ or $\nabla\Lq$ commutes with $\D$ and $\left(\grad^\perp\right)^{-1}$ and applying \cref{bernstein} due to the $x$ and $y$ frequency support allows us to divide by $\laqone$, proving the claim. The spatial support of each term is satisfactory using the inductive hypothesis \eqref{inductivespatialsupport} and the fact that multiplication by $\Lq$ commutes with convolution operators in $x$ and $y$.

The entirety of the second term $M_{T,2}$ will be absorbed into $\Mdot_T$ by applying $\D$. Since multiplication by $\Lq$ and $\partial_z\Lq$ commutes with $\D\Pbarqone$, we have that 
\begin{align*}
    \| M_{T,2} \|_{C^0} \lesssim \frac{1}{\laqone} \| \partial_z \Lq \|_{C^0} \| \partial_t \W \|_{C^0}.
\end{align*}
Since $\partial_t \nabla \W = \Dtq \nabla \W - \grad^\perp\Psi_q \cdot \grad \nabla \W$, we have that 
$$ \| \partial_t \nabla \W \|_{C^0} \lesssim \muqone \dqone^\frac{1}{2} + \dqone^\frac{1}{2} \laqone \lesssim \dqone^\frac{1}{2} \laqone. $$
Noticing that $\W=(-\Delta)^{-1}\nabla\cdot\left(\nabla\W\right)$ and using the frequency support of $\W$, we can apply \cref{bernstein} to obtain
$$ \| \partial_t \Wqone \|_{C^0} \lesssim \dqone^\frac{1}{2}. $$
Plugging in this estimate, we obtain
\begin{align*}
    \|M_{T,2}\|_{C^0} &\lesssim \frac{1}{\laqone} l_{q+1} \dqone^\frac{1}{2}\\
    &\leq \eta \dqtwo.
\end{align*}
The $C^1$ bound follows from estimating
\begin{align*}
\left\| \partial_z \Lq \partial_t \W \right\|_{C^1} &\leq \|\partial_z \Lq \|_{C^1} \|\partial_t \W\|_{C^0} + \|\partial_z \Lq \|_{C^0} \|\partial_t \W\|_{C^1} \\
&\leq l_{q+1}^2 \dqone^\frac{1}{2} + l_{q+1} \dqone^\frac{1}{2}\laqone
\end{align*}
applying $\D$, using the frequency support in $x$ and $y$ to divide by a factor of $\laqone$, and recalling that $l_{q+1} \leq \dqone^\frac{1}{2}\laqone$.

Before beginning to estimate the material derivative $\Dtq \Mdot_T$, note that $\Dtq \Lq = 0$. The material derivative of the transport error can then be decomposed as
\begin{align*}
\Dtq &\left( \D \circ \Pbarqone \bigg{(} \partial_t(\pert) + \grad^\perp\Psi_q \cdot \grad (\pert) \bigg{)} \right) = \\
&\quad \Lq \left[ \Dtq, \D \Pbarqone  \right] \left( \Dtq (\nabla\W) \right)\\
&\qquad + \Lq \D \Pbarqone \left( \Dtq \left( \sum_{kl}\Pgradk \left( \partial_t \Xl \wkl \right) \right) \right) \\
&\qquad + \Lq \D \Pbarqone \left( \Dtq \left( \sum_{kl} \left[ \Dtq, \Pgradk \right] (\Xl \wkl) \right) \right) \\
&\qquad + \partial_z \Lq \Dtq \left( \D\Pbarqone (0,0,\partial_t \W)^t \right) \\
&:= T_1 + T_2 + T_3 + T_4.
\end{align*}

Beginning with $T_1$, we have that by the commutator estimate \eqref{commutatorbsv1} and the estimate on the amplitude given above,
\begin{align*}
    \| T_1 \|_{C^0} &\lesssim \frac{1}{\laqone} \| \grad^\perp\Psi_q \|_{C^1} \| \Dtq(\nabla\W) \|_{C^0}\\
    & \lesssim \frac{1}{\laqone} \dq^\frac{1}{2} \laq  \muqone\dqone^\frac{1}{2}.
\end{align*}
Using that $\dq^\frac{1}{2}\laq \leq \muqone$ and $\frac{\muqone^2}{\laqone}\dqone^\frac{1}{2} \leq \dqtwo\dqone^\frac{1}{2}\laqone$, we obtain
$$ \| T_1 \|_{C^0} \leq \dqtwo \dqone^\frac{1}{2}\laqone. $$
Moving on to $T_2$, we apply \eqref{commutatorbsv1} and estimate the parameters as in $T_1$ to obtain
\begin{align*}
    \left\| T_2 \right\|_{C^0} &= \left\| \D \Pbarqone \left[ \Dtq \left( \sum_{kl} \Pgradk \left( \partial_t \Xl \wkl \right) \right) \right] \right\|_{C^0} \\
    &= \left\| \D \Pbarqone \left[ \sum_{kl} \left[ \Dtq,\Pgradk \right]\left( \partial_t \Xl \wkl \right) + \sum_{kl}\Pgradk \left( \partial_t^2 \Xl \wkl \right) \right] \right\|_{C^0} \\
    &\lesssim \frac{1}{\laqone} \left( \|\grad^\perp\Psi_q\|_{\Cbar^1}\|\partial_t\Xl\wkl\|_{C^0} + \|\partial_t^2 \Xl \wkl \|_{C^0} \right)\\
    &\lesssim \frac{1}{\laqone} \left( \dq^\frac{1}{2}\laq \muqone \dqone^\frac{1}{2} + \muqone^2 \dqone^\frac{1}{2} \right)\\
    &\lesssim \frac{1}{\laqone} \muqone^2 \dqone^\frac{1}{2}\\
    &\leq \dqtwo\dqone^\frac{1}{2}\laqone.
\end{align*}

We now estimate the material derivative of $T_3$. As everything is localized in $x$ and $y$ frequencies in an annulus of radius $\laqone$, we estimate the terms inside parentheses directly and then divide by $\frac{1}{\laqone}$ at the end.  We write
\begin{align*}
    \Dtq \left( \sum_{kl} \left[ \Dtq,\Pgradk \right](\Xl\wkl) \right) &= \left[ \Dtq, \sum_{kl} \left[ \Dtq, \Pgradk \right] \right] (\Xl\wkl) + \left[ \Dtq, \Pgradk \right] \left(\Dtq \left(\Xl\wkl\right)\right)\\
    &\qquad =: T_{3,1} + T_{3,2}.
\end{align*}
We can estimate $T_{3,2}$ using the commutator estimate \eqref{commutatorbsv1} as
\begin{align*}
    T_{3,2} &\leq \| \nabla\Psi_q \|_{C^1} \| \partial_t \Xl \wkl \|_{C^0}\\
    &\leq \dq^\frac{1}{2} \laq \muqone \dqone^\frac{1}{2}\\
    &\leq \muqone^2 \dqone^\frac{1}{2}.
\end{align*}
Applying $\D$ and dividing by $\laqone$ gives the desired estimate. For $T_{3,1}$, we apply the iterated commutator estimate \eqref{iteratedcommutator} to obtain
\begin{align*}
    T_{3,1} &\leq \frac{1}{\laqone} \left\|\nabla\Psi_q\right\|_{C^1}^2 \| \Xl \wkl \|_{C^1} + \|\Xl\wkl\|_{C^0} \left( \laqone \left\|\Dtq \grad^\perp\Psi_q\right\|_{C^0} + \left\|\grad^\perp\Psi_q\right\|_{C^1}^2 \right) \\
    &\leq \frac{1}{\laqone} \dq \laq^2 \dqone^\frac{1}{2}\laqone + \dqone^\frac{1}{2} \left( \laqone \dq \laq + \dq \laq^2 \right)\\
    &\lesssim \dq \dqone^\frac{1}{2} \laq \laqone.
\end{align*}
Applying $\D$ and dividing again by $\laqone$, we obtain the desired estimate.

Finally, we write $T_4$ as
\begin{align*}
    T_4 = \partial_z \Lq \left[ \Dtq,\D\Pbarqone \right](\partial_t \W) + \partial_z \Lq \D\Pbarqone\left( \Dtq (\partial_t \W) \right).
\end{align*}
We can estimate the first term using the commutator estimate \eqref{commutatorbsv1} by $$l_{q+1}\frac{1}{\laqone} \dq^\frac{1}{2} \laq \dqone^\frac{1}{2} \leq \dqtwo \dqone^\frac{1}{2}\laqone $$
as desired. For the second term, first note that
$$ \partial_t \W = \Dtq \W - \grad^\perp\Psi_q\cdot \grad\W. $$
Handling the second piece of the second term first, we then have that
\begin{align*}
    &\left\| \partial_z \Lq \D\Pbarqone \left( \Dtq\left( \grad^\perp\Psi_q \cdot \grad\W \right) \right) \right\|_{C^0} \\
    &\qquad\leq \left\| \partial_z \Lq \D\Pbarqone \left( \Dtq\left(\grad^\perp\Psi_q\right) \cdot \grad \W \right) \right\|_{C^0}\\
    &\qquad\qquad + \left\| \partial_z \Lq \D\Pbarqone \left( \grad^\perp\Psi_q \cdot \Dtq\left(\grad \W\right) \right) \right\|_{C^0}\\
    &\lesssim l_{q+1} \frac{1}{\laqone} \left( \dq \laq \dqone^\frac{1}{2} + \muqone\dqone^\frac{1}{2} \right)\\
    &\leq \dqtwo \dqone^\frac{1}{2} \laqone.
\end{align*}

Before beginning to estimate the first piece of the second term, note that
$$ \W = (-\Delta)^{-1} \left( \nabla \cdot \left( \Pgradk \Xl\wkl \right) \right). $$
Denoting the operator $(-\Delta)^{-1}\circ (\nabla\cdot)\circ \Pgradk$ by $K$, we have that $K$ is an order $-1$ convolution kernel.  Therefore, we can write that
\begin{align*}
    \partial_z \Lq \D\Pbarqone &\left(  \Dtq \left( \Dtq \left( \W \right) \right) \right) \\
    &= \partial_z \Lq \D\Pbarqone \left(  \Dtq \left[ \Dtq, K \right] (\Xl\wkl) \right) + \partial_z \Lq \D\Pbarqone \left( \Dtq \left( K(\partial_t\Xl\wkl) \right) \right).
\end{align*}
The second term is bounded as follows:
\begin{align*}
    \left\| \partial_z \Lq \D\Pbarqone \left( \Dtq \left( K (\partial_t\Xl\wkl) \right)\right) \right\|_{C^0} &\leq \left\| \partial_z \Lq \D\Pbarqone \left( \left[ \Dtq, K \right] (\partial_t \Xl \wkl) \right) \right\|_{C^0} \\
    &\qquad + \left\| \partial_z \Lq \D\Pbarqone K(\partial_t^2 \Xl \wkl) \right\|_{C^0}\\
    &\lesssim l_{q+1} \frac{1}{\laqone}\frac{1}{\laqone} \dq^\frac{1}{2} \laq \muqone \dqone^\frac{1}{2} + l_{q+1} \frac{1}{\laqone}\frac{1}{\laqone}\muqone^2 \dqone^\frac{1}{2}\\
    &\leq \dqtwo \dqone^\frac{1}{2}\laqone.
\end{align*}
Here we have used the presence of $\Pbarqone$ and \cref{flatfrequencylocalizers} to see that $K$ gains a factor of $\frac{1}{\laqone}$. Then for the first term, we will use the iterated commutator estimate \eqref{iteratedcommutator} again.  We can then write
\begin{align*}
    &\left\| \partial_z \Lq \D\Pbarqone \left(  \Dtq \left[ \Dtq, K \right] (\Xl\wkl) \right) \right\|_{C^0} \leq \left\| \partial_z \Lq \D\Pbarqone \left( \left[ \Dtq ,\left[ \Dtq, K \right] \right] (\Xl\wkl) \right) \right\|_{C^0} \\
    &\qquad\qquad\qquad\qquad + \left\|  \partial_z \Lq \D\Pbarqone \left( \left[ \Dtq, K \right] (\partial_t \Xl \wkl) \right) \right\|_{C^0}\\
    &\qquad \leq \left\| \partial_z \Lq \right\|_{C^0}\laqone^{-1} \left( \laqone^{-2} \left\| \nabla\Psi_q \right\|_{C^1}^2 \|\Xl\wkl\|_{C^1} + \|\Xl\wkl\|_{C^0}\left( \left\| \Dtq \nabla\Psi_q \right\| + \laqone^{-1}\|\nabla\Psi_q\|_{C^1}^2 \right) \right)\\
    & \qquad \qquad + \| \partial_z \Lq \|_{C^0} \laqone^{-1} \laqone^{-1} \|\nabla\Psi_q\|_{C^1} \|\partial_t \Xl \wkl\|_{C^0} \\
    &\qquad \leq l_{q+1} \frac{1}{\laqone} \left( \laqone^{-2}(\dq^\frac{1}{2}\laq)^2\dqone^\frac{1}{2}\laqone + \dqone^\frac{1}{2}\left(\dq\laq + \laqone^{-1} (\dq^\frac{1}{2}\laq)^2\right) \right)\\
    &\qquad \qquad + l_{q+1} \frac{1}{\laqone^2}\dq^\frac{1}{2}\laq\muqone\dqone^\frac{1}{2}\\
    &\qquad \leq \dqtwo\dqone^\frac{1}{2}\laqone,
\end{align*}
concluding the proof.
\end{proof}

\subsection{Nash Error}

\begin{lemma}\label{nasherror}
The Nash error
$$ \grad \cdot \left( \grad^\perp \left( \Lq\W \right) \otimes \nabla \Psi_q \right) $$
is equal to
$$ \curl\left({Q}_{N}\right) + \grad \cdot \M_{N} $$
with the estimates
$$ \| {Q}_{N} \|_{C^0} \leq \dqone, \qquad \| {Q}_{N} \|_{C^1} \leq \dqone \laqone $$
$$ \| \M_{N} \|_{C^0} \leq \eta\dqtwo, \qquad \| \M_N \|_{C^1} \leq \dqtwo \laqone, \qquad \| \Dtq \M_N \|_{C^0} \leq \dqtwo \dqone^\frac{1}{2} \laqone. $$
Furthermore, $Q_{N}$ and $\M_{N}$ are supported in the set $$\mathbb{T}^2 \times \left[ \frac{1}{l_{q+1}}, 2\pi - \frac{1}{l_{q+1}} \right].$$
\end{lemma}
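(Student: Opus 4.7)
The proof follows the template of \cref{transporterror}. A crucial preliminary observation is that the stated expression equals $\vec N := \pertperp\cdot\grad\nabla\Psi_q$: since $\pertperp = \Lq\grad^\perp\W$ with $\Lq$ depending only on $z$, we have $\grad\cdot\pertperp = \Lq\grad\cdot\grad^\perp\W = 0$, so the product rule yields $\grad\cdot(\nabla\Psi_q\otimes\pertperp) = \pertperp\cdot\grad\nabla\Psi_q$. This reformulation is essential, for it moves the sole $x,y$-derivative from the high-frequency $\pertperp$ onto the low-frequency $\nabla\Psi_q$, producing the bound
\begin{align*}
\|\vec N\|_{C^0} \ \lesssim\ \|\pertperp\|_{C^0}\|\nabla\Psi_q\|_{C^2} \ \lesssim\ \dqone^{\frac{1}{2}}\dq^{\frac{1}{2}}\laq
\end{align*}
via \cref{help}(6) and \eqref{inductivevelocity}. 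Furthermore, since $\nabla\Psi_q$ has $x,y$-frequency support in a ball of radius $\laq \ll \laqone$ by \eqref{inductivefrequencysupport} while $\pertperp$ has $x,y$-frequency support in an annulus of radius $\approx \laqone$ by the construction of $\W$, the Nash error $\vec N$ is localized in $x,y$-frequency in a cylinder above an annulus of radius $\approx \laqone$, and \cref{flatfrequencylocalizers} permits us to insert $\Pbarqone$ as the identity on $\vec N$.

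I next decompose $\vec N = \Pgradbar(\vec N) + \Pgradbarperp(\vec N)$ using the projectors of \cref{t2operators}. The first piece has its first two components in the form of a 2D gradient in $x,y$, making it a compatible input for the anti-divergence $\D$ of \cref{inversediv}, and I set $\Mdot_N := \D\Pgradbar\Pbarqone(\vec N)$. The second piece has vanishing third component and its first two components form a 2D perpendicular gradient, so it is absorbed into $\curl(Q_N)$ by setting $Q_N = (0,0, -(\grad^\perp)^{-1}\Pgradbarperp\Pbarqone(\vec N))$. Because $\D$ and $(\grad^\perp)^{-1}$ are order $-1$ convolution operators in $x,y$ and $\vec N$ is already localized at $x,y$-frequency $\laqone$, each application gains a factor $\laqone^{-1}$ via \cref{bernstein}, yielding
\begin{align*}
\|\Mdot_N\|_{C^0},\ \|Q_N\|_{C^0} \ \lesssim\ \frac{\dq^{\frac{1}{2}}\dqone^{\frac{1}{2}}\laq}{\laqone} \ \leq\ \dqtwo
\end{align*}
by parameter inequality (4) of \cref{parameterinequalities}. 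Taking $a$ large enough absorbs the implicit constants, giving $\|\Mdot_N\|_{C^0} \leq \eta\dqtwo$, while $\dqtwo \leq \dqone$ supplies the required bound on $Q_N$. The $C^1$ bounds follow by differentiating and gaining a single factor $\laqone$ from the $x,y$-frequency localization.

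For the material derivative of $\Mdot_N$ I split
\begin{align*}
\Dtq\Mdot_N \ =\ [\Dtq,\D\Pbarqone\Pgradbar](\vec N) \ +\ \D\Pbarqone\Pgradbar(\Dtq\vec N).
\end{align*}
The commutator is controlled by \eqref{commutatorbsv1}, which gains an order in $x,y$ and produces a bound $\lesssim \laqone^{-1}\|\grad^\perp\Psi_q\|_{C^1}\|\vec N\|_{C^0}\lesssim \dq^{\frac{1}{2}}\laq\cdot\dqtwo$; the inequality $\dq^{\frac{1}{2}}\laq \leq \dqone^{\frac{1}{2}}\laqone$ (a direct consequence of $c > \frac{1}{2}$ in \cref{parameterinequalities}) brings this under $\dqtwo\dqone^{\frac{1}{2}}\laqone$. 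For $\Dtq\vec N$, the identities $\Dtq\Lq = \Dtq\Phi_l = \Dtq w_{kl} = 0$ reduce $\Dtq\pertperp$ to commutators $[\Dtq,\Pgradk]$ and $\partial_t\Xl$ contributions bounded by $\dq^{\frac{1}{2}}\laq\dqone^{\frac{1}{2}} + \muqone\dqone^{\frac{1}{2}}\lesssim \muqone\dqone^{\frac{1}{2}}$, which, together with $\|\Dtq\nabla\Psi_q\|_{C^0}\lesssim\dq\laq$ from \cref{help}(4), gives after the $\laqone^{-1}$ gain from $\D$ a bound $\leq\dqtwo\dqone^{\frac{1}{2}}\laqone$ by parameter inequalities (1), (2), and (4). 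The treatment of $\Dtq Q_N$ is analogous. The spatial support claim is immediate: multiplication by $\Lq$ commutes with the $x,y$-convolution operators $\D$, $\Pbarqone$, $\Pgradbar$, and $(\grad^\perp)^{-1}$, and $\supp(\nabla\Psi_q)\cap\supp(\Lq) \subset \mathbb{T}^2\times[1/l_{q+2},2\pi-1/l_{q+2}] \subset \mathbb{T}^2\times[1/l_{q+1},2\pi-1/l_{q+1}]$. As in \cref{transporterror}, the principal technical obstacle is the iterated commutator estimate \eqref{iteratedcommutator}, which is required to control the second-order $\Dtq$ contributions arising when one fully expands $\Dtq(\D\Pbarqone\Pgradbar(\Dtq\pertperp))$-type terms.
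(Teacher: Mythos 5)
Your argument follows the paper's own proof quite closely: rewriting the Nash error as $\pertperp\cdot\grad\nabla\Psi_q$, applying $\Pbarqone$, decomposing with $\Pgradbar$/$\Pgradbarperp$, inverting $\D$ and $(\grad^\perp)^{-1}$ for a $\laqone^{-1}$ gain, and estimating $\Dtq\Mdot_N$ by splitting into a commutator plus $\D\Pbarqone\Pgradbar(\Dtq\vec N)$. Two small inaccuracies and one misconception are worth correcting.

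First, in the $C^0$ estimate you write $\|\pertperp\|_{C^0}\|\nabla\Psi_q\|_{C^2}$; since $\vec N$ places exactly one $\grad$ on $\nabla\Psi_q$, this should read $\|\pertperp\|_{C^0}\|\nabla\Psi_q\|_{C^1}$. Your numerical conclusion $\dqone^{1/2}\dq^{1/2}\laq$ matches the $C^1$ bound from \eqref{inductivevelocity}, so this is a typo, but with $C^2$ literally inserted the displayed inequality is false ($\|\nabla\Psi_q\|_{C^2}\leq\dq^{1/2}\laq^2$). Second, when expanding $\Dtq\vec N$ you invoke $\|\Dtq\nabla\Psi_q\|_{C^0}\lesssim\dq\laq$ directly, but the term that actually appears is $\pertperp\cdot\Dtq(\grad\nabla\Psi_q)$, and $\Dtq$ does not commute with $\grad$; one must write $\Dtq(\grad\nabla\Psi_q)=\grad(\Dtq\nabla\Psi_q)-\grad\grad^\perp\Psi_q\cdot\grad\nabla\Psi_q$ and pay the additional $\|\nabla\Psi_q\|_{C^1}^2$, exactly as the paper does. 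This is easily repaired and does not change the final bound, but it is a genuine missing step as written.

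Finally, your closing remark that the iterated commutator estimate \eqref{iteratedcommutator} is "the principal technical obstacle" for this lemma is incorrect. That estimate is needed in \cref{transporterror} because there the quantity inside the anti-divergence is itself a material derivative $\Lq\Dtq\nabla\W$, so that $\Dtq\Mdot_T$ contains second-order $\Dtq$ expressions of the form $\Dtq[\Dtq,T_K]$. The Nash error $\pertperp\cdot\grad\nabla\Psi_q$ carries no $\Dtq$ before one takes the material derivative of $\Mdot_N$, so only one application of $\Dtq$ ever appears, and a single use of \eqref{commutatorbsv1} (together with \cref{help}(4) and the $[\Dtq,\grad]$ correction above) suffices. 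The iterated commutator plays no role here, and recognizing this is the point of distinguishing the transport and Nash errors.
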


\begin{proof}
Due to the spatial support of $\nabla\Psi_q$ and $\grad^\perp(\mathbb{W}_{q+1}L_{q+1})$, the Nash error is equal to 
$$\grad \cdot \left( \nabla\Psi_q \otimes \grad^\perp\W \right)$$
and the claim on the spatial support is immediate since we shall only ever convolve in $x$ and $y$. We calculate the amplitude by writing 
\begin{align*}
    \left\|  \grad \cdot \left( \nabla\Psi_q \otimes \grad^\perp\W \right) \right\|_{C^0} &\leq \left\| \grad(\nabla\Psi_q) \grad^\perp\W \right\|_{C^0}\\
    &\leq \dq^\frac{1}{2} \laq \dqone^\frac{1}{2} \\
    &\leq \eta \dqtwo \laqone.
\end{align*}
Decomposing into $\Pgradbar$ and $\Pgradbarperp$ and using Bernstein's inequality as for the transport error shows the desired $C^0$ bounds on $Q_N$ and $\Mdot_N$. The $C^1$ bounds follow by applying $\nabla$ to the Nash error and noticing that the $x$ and $y$ frequency support $\grad^\perp\W\cdot\grad\nabla\Psi_q$ is contained in an annulus of radius $\laqone$, allowing us to divide by $\laqone$ after applying $\D$ and $\left(\grad^\perp\right)^{-1}$.

Moving now to the material derivative, we use \eqref{commutatorbsv1} to write that
\begin{align*}
    &\left\| \Dtq \left( \D \Pbarqone \grad \cdot \left( \nabla\Psi_q \otimes \grad^\perp\W \right) \right) \right\|_{C^0} \leq \left\|  \D \Pbarqone \left( \Dtq \left(   \grad^\perp\W \cdot\grad\nabla\Psi_q  \right)\right) \right\|_{C^0}\\
    &\qquad \qquad \qquad \qquad + \left\|   \left[ \D \Pbarqone, \Dtq \right] \left( \grad^\perp\W \cdot\grad\nabla\Psi_q  \right) \right\|_{C^0}\\
    &\qquad \qquad \lesssim \frac{1}{\laqone} \left( \left\| \Dtq \left( \grad^\perp\W \cdot \grad \nabla\Psi_q \right) \right\|_{C^0} + \left\|\grad^\perp\Psi_q\right\|_{C^1}\left\|\grad^\perp\W\cdot\grad\nabla\Psi_q\right\|_{C^0} \right)\\
    &\qquad \qquad \leq \frac{1}{\laqone} \bigg{(} \left\|\Dtq\grad^\perp\W\right\|_{C^0}\left\|\grad\nabla\Psi_q\right\|_{C^0} + \left\|\grad^\perp\W\right\|_{C^0} \left\|\Dtq(\grad\nabla\Psi_q)\right\|_{C^0}\\
    &\qquad \qquad \qquad + \left\|\grad^\perp\Psi_q\right\|_{C^1}\left\|\grad^\perp\W\cdot\grad\nabla\Psi_q\right\|_{C^0}  \bigg{)}\\
     &\qquad \qquad \leq \frac{1}{\laqone} \bigg{(} \left\|\Dtq\grad^\perp\W\right\|_{C^0}\left\|\grad\nabla\Psi_q\right\|_{C^0} + \left\|\grad^\perp\W\right\|_{C^0} \left\|\grad\Dtq(\nabla\Psi_q)\right\|_{C^0}\\
    &\qquad \qquad \qquad + \left\| \grad^\perp\W \right\|_{C^0} \left\| \nabla\Psi_q \right\|_{C^1}^2 +   \left\|\grad^\perp\Psi_q\right\|_{C^1}\left\|\grad^\perp\W\cdot\grad\nabla\Psi_q\right\|_{C^0}  \bigg{)}\\   
    &\qquad \qquad \leq \frac{1}{\laqone} \left( \muqone \dqone^\frac{1}{2} \dq^\frac{1}{2} \laq + \dqone^\frac{1}{2} \dq \laq^2 + \dqone^\frac{1}{2} \dq \laq^2 + \dqone^\frac{1}{2} \dq \laq^2 + \dq^\frac{1}{2} \laq \dqone^\frac{1}{2} \dq^\frac{1}{2}\laq \right)\\
    &\qquad \qquad \lesssim \frac{1}{\laqone}\muqone^2 \dqone^\frac{1}{2}\\
    &\leq \dqtwo \dqone^\frac{1}{2} \laqone.
\end{align*}
\end{proof}

\subsection{Oscillation Error}
Before defining and estimating the oscillation error, we address the effect of the localizer $\Lq$.  
As discussed earlier, $\Lq$ \textit{factors out} of the oscillation error. The interaction of the perturbation $\pert$ with itself is given in the term
\begin{align*}
    \grad \cdot \left( \nabla(\Lq\W) \otimes \grad^\perp(\Lq \W) \right). 
\end{align*}
Since $L_q$ depends only on $z$, the first two components are equal to
$$ \Lq^2 \grad \cdot \left( \grad\W \otimes \grad^\perp \W \right). $$
In the third row, we can write that
\begin{align*}
     \grad \cdot \left(\grad^\perp\left(\Lq\W\right) \partial_z \left(\Lq\W\right) \right) &= \grad \cdot \left( \grad^\perp\left(\Lq\W\right) \left(\W \partial_z \Lq + \Lq \partial_z \W \right) \right)\\
    &= \Lq \partial_z \Lq  \grad^\perp\W \cdot \grad \W  + \Lq^2  \grad^\perp \W \cdot \grad(\partial_z \W)\\
    &= \Lq^2 \grad^\perp\W \cdot \grad \partial_z \W,
\end{align*}
showing that
\begin{align}\label{importantcancellation}
    \grad \cdot \left( \nabla (\Lq \W ) \otimes \grad^\perp (\Lq \W ) \right) = \Lq^2 \grad \cdot \left( \nabla \W \otimes \grad^\perp \W \right).
\end{align}
By the inductive assumption \eqref{inductivespatialsupport} on the spatial support of $\M_q$, we have also that
$$\grad\cdot\M_q = \Lq^2 \grad\cdot\M_q.$$ Therefore 
\begin{align}\label{factoringout}
    \grad \cdot \left( \nabla (\Lq \W ) \otimes \grad^\perp (\Lq \W ) \right) + \grad \cdot \M_q = \Lq^2 \grad \cdot \left( \nabla\W \otimes \grad^\perp \W + \M_q \right).
\end{align}
We will decompose the right hand side into several terms.  The definition of this decomposition as well as the estimates for each piece comprise the remainder of this section. We first collect some preliminary estimates.

\begin{lemma}\label{olowstuff}
The following estimates hold.
\begin{enumerate}
\item For $\theta\in[0,1]$, $\|\wkl\|_{C^\theta} \lesssim \dqone^\frac{1}{2}\laqone^\theta$.
\item For $\theta\in[0,2]$, $\left\| \left[ \Pgradk, \aklexp \right](\expk ik) \right\|_{{C}^\theta} \lesssim \dqone^\frac{1}{2} \laqone^{\theta-\beta}$.
\item $\left\| \Dtq \left( \left[ \Pgradk , \aklexp \right] \left( \expk ik \right) \right) \right\|_{C^0} \lesssim \muqone \dqone^\frac{1}{2}$.
\end{enumerate}
\end{lemma}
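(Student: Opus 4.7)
I plan to derive the three estimates in turn, exploiting the bounds from \cref{help} together with the parameter inequalities in \cref{parameterinequalities}.

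For item (1), my approach would be Hölder interpolation (\cref{holder spaces}(3)) between the endpoints $\|\wkl\|_{C^0}\lesssim\dqone^\frac{1}{2}$ (immediate from the definition of $\wkl$ together with $\sqrt{\rho_l}\lesssim\dqone^\frac{1}{2}$ furnished by \eqref{inductiverhol} and the uniform smoothness of $c_{j,k}$) and $\|\wkl\|_{C^1}\lesssim\dqone^\frac{1}{2}\laqone$ from item (5) of \cref{help}.

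For item (2), I would set $\phi:=\aklexp$ and $g:=\expk ik$ and first bound $\|\phi\|_{C^1}\lesssim\dqone^\frac{1}{2}\laqone^{1-\beta}$ using items (1) and (3) of \cref{help} together with \cref{parameterinequalities}(5). The $C^0$ bound should then follow from a commutator estimate of the form
\[
\|[\Pgradk,\phi](g)\|_{C^0} \lesssim \laqone^{-1}\|\nabla\phi\|_{C^0}\|g\|_{C^0},
\]
reflecting the gain of one derivative when $\Pgradk$ is commuted with multiplication against a function $g$ frequency-localized at $\laqone k$. For $\theta\in(0,2]$, I would use that the commutator's output is frequency-localized near $\laqone k$ and apply Bernstein's inequality (\cref{bernstein}) to convert each derivative into a factor of $\laqone$, yielding $\dqone^\frac{1}{2}\laqone^{\theta-\beta}$.

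I expect item (3) to be the main obstacle, since a naive Leibniz expansion produces terms of size $\laqone\dqone^\frac{1}{2}$, far larger than the desired $\muqone\dqone^\frac{1}{2}$; these arise from $\Dtq$ differentiating the exponential factor $e^{i\laqone(\Phi_l-x)\cdot k}$ in $\phi$. The resolution should rely on the transport identity $\Dtq(\phi g)=\Dtq(\wkl)=0$, which follows from $\Dtq\akl=0$ and $\Dtq\Phi_l=0$. Expanding via Leibniz,
\[
\Dtq\bigl([\Pgradk,\phi](g)\bigr) = [\Dtq,\Pgradk](\wkl) - \Dtq(\phi)\Pgradk(g) - \phi[\Dtq,\Pgradk](g) - \phi\Pgradk(\Dtq g),
\]
I would observe that the last term vanishes because $\Dtq g = i\laqone(k\cdot\grad^\perp\Psi_q)g$, and since $\grad^\perp\Psi_q$ has zero mean, $\Pgradk$ annihilates products of $g$ with mean-zero slowly varying multipliers. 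The explicit formulas $\Dtq\phi = -i\laqone(k\cdot\grad^\perp\Psi_q)\phi$ and $\Dtq g = i\laqone(k\cdot\grad^\perp\Psi_q)g$ should then make the two $\laqone$-order terms $\Dtq(\phi)\Pgradk(g)$ and $\phi[\Dtq,\Pgradk](g)$ cancel exactly, leaving only $[\Dtq,\Pgradk](\wkl)$. This residual would be controlled by the standard commutator estimate $\|[\Dtq,\Pgradk](f)\|_{C^0}\lesssim\|\grad^\perp\Psi_q\|_{C^1}\|f\|_{C^0}$, yielding $\dq^\frac{1}{2}\laq\dqone^\frac{1}{2}\leq\muqone\dqone^\frac{1}{2}$ via \cref{parameterinequalities}(1).
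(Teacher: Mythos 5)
Your approach to items (1) and the $C^0$ case of (2) matches the paper: interpolation between the $C^0$ and $C^1$ bounds from \cref{help} for (1), and the commutator estimate (the $k=0$ case of \eqref{commutatorbsv2}) for (2). Your argument for (3) is correct and arrives at the same endpoint as the paper, though by a longer route: the paper directly uses $\Pgradk(\expk ik)=\pm\expk ik$ to write $[\Pgradk,\aklexp](\expk ik)=\Pgradk(\wkl)\mp\wkl$, then applies $\Dtq$ and uses $\Dtq\wkl=0$ to land on $[\Dtq,\Pgradk](\wkl)$; you instead Leibniz-expand into four terms and show the extra three cancel or vanish. Your cancellation computation (that $\Dtq(\phi)\Pgradk(g)$ and $\phi[\Dtq,\Pgradk](g)$ cancel exactly, using $\Pgradk(\Dtq g)=0$ from the mean-zero property of $\grad^\perp\Psi_q$) is verified correct, and it makes the mechanism more visible than the paper's terse one-liner; both paths are valid and rely on the same identity $\Dtq\wkl=0$.

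However, the higher-$\theta$ case of (2) has a genuine gap. You propose to deduce the $C^\theta$ bound from the $C^0$ bound by applying Bernstein's inequality, on the grounds that the commutator output is ``frequency-localized near $\laqone k$.'' But it isn't, in the sense Bernstein requires. Writing $[\Pgradk,\aklexp](\expk ik)=\Pgradk(\wkl)\mp\aklexp\,\Pgradk(\expk ik)=\Pgradk(\wkl)\mp\wkl$, the second term $\wkl=\akl e^{i\laqone k\cdot\Phi_l}ik$ involves the nonlinear phase $e^{i\laqone k\cdot\Phi_l}$ and the amplitude $\akl$ (built from the transported stress $\Mdot_{q,l}$), neither of which has compactly supported Fourier transform; only $\Pgradk(\wkl)$, which is a single mode, is literally localized. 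Consequently case (3) of \cref{bernstein} does not apply and the inequality $\|\,\cdot\,\|_{C^\theta}\lesssim\laqone^\theta\|\,\cdot\,\|_{C^0}$ cannot be invoked this way. The correct route, which is what the paper does, is to apply \eqref{commutatorbsv2} for each integer $k\in\{0,1,2\}$ directly — this uses that each derivative of $\aklexp$ costs a factor $\laqone^{1-\beta}$ and each derivative of $\expk ik$ costs $\laqone$, preserving the crucial $\laqone^{-\beta}$ gain in every $C^k$ norm — and then interpolate via \cref{holder spaces}(3) to handle non-integer $\theta$.
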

\begin{proof}
The proof of (1) follows from interpolating
$$\|\wkl\|_{C^0}\leq\dqone^\frac{1}{2}, \qquad \|\wkl\|_{C^1}\leq\dqone^\frac{1}{2}\laqone$$
using \cref{help} and \cref{holder spaces}. To prove (2), recall that by \cref{help}, each derivative on $\aklexp$ costs a factor of $\laqone^{1-\beta}$. Then we can apply the commutator estimate \eqref{commutatorbsv2} to obtain
\begin{align*}
    \left\| \left[ \Pgradk, \aklexp \right](\expk ik) \right\|_{C^k} &\lesssim \frac{1}{\laqone} \sum_{0\leq j \leq k} \left\|\nabla\aklexp\right\|_{{C}^j} \left\|e^{i\laqone k\cdot x} ik\right\|_{{C}^{k-j}} \\
    &\lesssim \dqone^\frac{1}{2}\laqone^{k-\beta}.
\end{align*}
The non-integer bounds then follow from interpolation.  To prove (3), observe that 
$$ \left[ \Pgradk , \aklexp \right] \left( \expk ik \right) = \Dtq \left( \Pgradk(\wkl) \right) - \Dtq \wkl. $$
and use the estimates in the section on the transport error.
\end{proof}

\subsubsection{Estimates for \texorpdfstring{O\textsubscript{high}}{O high}}

\begin{lemma}\label{oscillationhigh}
The high frequency portion of the oscillation error
$$\Lq^2 \grad \cdot \left( \sum_{k+k'\neq 0} \Xl\Xlprime \left(  \Pgradk\left(\wkl\right) \right)\otimes \left( \Pgradkbarperpprime\left(\overline{\wklprime}^\perp\right)\right) \right)$$
is equal to
$$ \curl\left({Q}_{high}\right) + \grad \cdot \Mdot_{O,high} $$
with the estimates
$$ \| {Q}_{high} \|_{C^0} \leq \dqone, \qquad \| {Q}_{high} \|_{C^1} \leq \dqone \laqone $$
$$ \| \Mdot_{O,high} \|_{C^0} \leq \eta\dqtwo, \qquad \| \Mdot_{O,high} \|_{C^1} \leq \dqtwo \laqone, \qquad \| \Dtq \Mdot_{O,high} \|_{C^0} \leq \dqtwo \dqone^\frac{1}{2} \laqone. $$
Furthermore, $Q_{high}$ and $\grad\cdot \Mdot_{O,high}$ are supported in the set $$\mathbb{T}^2 \times \left[ \frac{1}{l_{q+1}}, 2\pi - \frac{1}{l_{q+1}} \right].$$
\end{lemma}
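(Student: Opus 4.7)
The approach is to decompose the oscillatory product using commutator expansions for the Fourier projectors $\Pgradk$ and $\Pgradkbarperpprime$, and then to exploit the $\laqone^{-1}$ gain of the inverse divergence $\D \Pbarqone$ acting on high $(x,y)$-frequency modes. Writing $\wkl = \aklexp \, e^{i\laqone k\cdot x} ik$, one has
\[
\Pgradk(\wkl) = \aklexp \, \Pgradk\!\left(e^{i\laqone k\cdot x}ik\right) + \left[\Pgradk, \aklexp\right]\!\left(e^{i\laqone k\cdot x}ik\right),
\]
with a parallel expansion for $\Pgradkbarperpprime(\overline{\wklprime}^\perp)$. Multiplying out yields one principal tensor product together with three terms containing at least one commutator; by \cref{olowstuff}(2), every such commutator contributes an extra factor of $\laqone^{-\beta}$, which combined with the $\laqone^{-1}$ from $\D$ produces bounds comfortably below $\eta\dqtwo$.

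For the principal term one analyses
\[
\sum_{k+k'\neq 0} \Xl \Xlprime \, \aklexp \, \aklexpprime \, e^{i\laqone(k+k')\cdot x}(ik) \otimes (i \kbarperpprime),
\]
whose $(x,y)$-divergence (since $(ik)$ and $(i\kbarperpprime)$ are constant) reduces to $-k_m(\kbarperpprime \cdot \grad f)$ where $f$ is the scalar amplitude. When $\bark + \bark' \neq 0$, the derivative hitting $e^{i\laqone(k+k')\cdot x}$ produces the full high frequency $\approx \laqone$; by \cref{flatfrequencylocalizers}, $\Pbarqone$ captures the expression and $\D$ furnishes the promised $\laqone^{-1}$ gain. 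The subtle case is $k + k' \neq 0$ but $\bark + \bark' = 0$, arising from pairs such as $(k_2, -k_5)$ in $\Omega_1$ (the only mode pairs in $\Omega_j$ with coincident first two components but distinct third components, by the choice of frequencies in \cref{frequencymodes}); there $e^{i\laqone(k+k')\cdot x}$ depends only on $z$, so $\grad$ annihilates it. However in precisely this regime $\bark$ and $\bark'$ are anti-parallel, forcing the algebraic cancellation $\kbarperpprime \cdot \bark = 0$. What remains of $\grad f$ stems purely from the slow-phase derivatives, which by \cref{help}(3) contribute $\laqone^{1-\beta}$ rather than $\laqone$. The output's $(x,y)$-frequency support then lies within $\{|\bark|\lesssim \laqone^{1-\beta}\}$, and $\D\Pbarqone$ still provides sufficient smallness through the resulting $\laqone^{-\beta}$ gain.

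Having bounded the resulting matrix in $C^0$ and $C^1$, decompose it via $\Pgradbar + \Pgradbarperp$: the gradient piece is rewritten as $\curl(Q_{high})$ using the inversion of $\grad^\perp$ from \cref{t2operators}, while the perpendicular-gradient piece is absorbed into $\Mdot_{O,high}$. The $C^0$ and $C^1$ bounds follow from \cref{bernstein}, the amplitude estimates of \cref{help}, and the parameter inequalities \cref{parameterinequalities} (in particular $\muqone\dqone^{1/2} \leq \dqtwo\laqone$). The material derivative $\Dtq \Mdot_{O,high}$ is handled using $\Dtq \wkl = 0$ and $\Dtq \Lq = 0$, expanding into iterated commutators of the form $[\Dtq, \D\Pbarqone]$ and $[\Dtq, \Pgradk]$ along with terms containing $\partial_t \Xl$, each bounded via the commutator identities \eqref{commutatorbsv1} and \eqref{iteratedcommutator}. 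The spatial support property is automatic, as $\Lq^2$ multiplies the whole expression and commutes with $\D \Pbarqone$. The main obstacle is the cancellation in the regime $\bark + \bark' = 0$: without the algebraic identity $\kbarperpprime \cdot \bark = 0$ for anti-parallel 2D components, the inverse divergence could not supply the missing $\laqone^{-1}$ gain, and the scheme would fail for these pairs.
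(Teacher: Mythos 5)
Your decomposition of the tensor product into a ``pure'' term $\wkl\otimes\overline{\wklprime}^\perp$ plus commutator terms matches the paper's split $O_{high,1}+O_{high,2}+O_{high,3}$, and your treatment of the commutator pieces (extra $\laqone^{-\beta}$ from \cref{olowstuff}(2), then apply $\D\Pbarqone$) is correct. But the handling of the principal term $O_{high,3}$ contains a genuine gap, and it is precisely the hard part of the lemma.

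When the divergence $\grad\cdot$ lands on the fast phase $\expk ik\otimes\expkprime i\kbarperpprime$ with $\bark+\bark'\neq 0$, the resulting expression has $C^0$ size $\approx\|\fkl\|_{C^0}\laqone\lesssim\dqone\laqone$. You assert that ``$\D$ furnishes the promised $\laqone^{-1}$ gain,'' which produces only $\dqone$ — but the target is $\|\Mdot_{O,high}\|_{C^0}\leq\eta\dqtwo$, and $\dqone/\dqtwo = a^{b^{q+1}(b-1)}\to\infty$. The $\laqone^{-1}$ from $\D$ only cancels the $\laqone$ picked up by the phase derivative and leaves you with an error of the same size as the \emph{previous} stress, so the scheme would not converge. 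The paper's proof explicitly flags this term (``the problematic terms arise when the differential operators fall on $e^{i\laqone k\cdot x}$'') and handles it with an entirely different mechanism: it computes the \emph{full three-dimensional} divergence $\nabla\cdot$ of this piece $O_{high,3,2}$, and shows via the algebraic identity \eqref{algebraicidentity} (a consequence of the Beltrami-like stationarity in \cref{stationarysolutions}) that the leading $\laqone^2$-term cancels pointwise. One must then project with the \emph{3D} operators $\Pgrad$ and $\Pcurl$: $\Pgrad(O_{high,3,2})=\nabla\mathcal{F}$ inherits the extra smallness $\laqone^{-\beta}$ from the cancellation and can safely be fed to $\D$, while $\Pcurl(O_{high,3,2})=\curl(\mathcal{G})$, which is genuinely of size $\dqone\laqone$, is absorbed into $\curl(Q_{high})$ (and $Q_{high}$ is allowed to be as large as $\dqone$, unlike $\Mdot_{O,high}$). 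Your proposal instead applies the 2D decomposition $\Pgradbar+\Pgradbarperp$ to the whole block; neither projection sees the cancellation in the 3D divergence, so both pieces retain the full $\dqone\laqone$ size and the $\Mdot$ part fails. (You also have $\Pgradbar$ and $\Pgradbarperp$ swapped — the gradient piece goes to $\Mdot$ via $\D$ and the perpendicular-gradient piece into the curl — but that is a minor slip.)

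Your emphasis on the pairs with $\bark+\bark'=0$ as ``the main obstacle'' also misidentifies the real difficulty. For those pairs the factor $\kbarperpprime\cdot(\bark+\bark')$ already vanishes (since $\kbarperpprime\cdot\bark'=0$ always and you correctly observe $\kbarperpprime\cdot\bark=0$ for anti-parallel 2D parts), so the fast-phase derivative contributes nothing; these pairs are benign. The problem is the \emph{generic} pairs $\bark+\bark'\neq 0$, where no individual-term cancellation exists — only the sum identity \eqref{algebraicidentity} over all $(k,k')$ does the job, and only after contracting with the 3D divergence. Your proposal never invokes this identity, and without it the bound on $\Mdot_{O,high}$ cannot be closed.
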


\begin{proof}

Towards obtaining a decomposition, we can apply the frequency localizer $\overline{\mathbb{P}}_{\approx \laqone}$ since $k\neq k'$ and \cref{flatfrequencylocalizers} to write
\begin{align*}
    &\Lq^2 \grad \cdot \sum_{k+k'\neq 0} \Xl\Xlprime \Pgradk\left(\wkl\right) \otimes  \Pgradkbarperpprime\left(\overline{\wklprime}^\perp\right) \\
    &= \Lq^2 \grad \cdot \overline{\mathbb{P}}_{\approx \laqone} \sum_{k+k'\neq 0} \Xl \Xlprime \bigg{(} \left( \left[ \Pgradk, \aklexp \right](\expk ik) + \wkl \right) \otimes\\
    &\qquad \qquad \qquad \qquad \left( \left[ \Pgradkbarperpprime, \aklexpprime \right](\expkprime i\kbarperpprime) + \overline{\wklprime}^\perp\right) \bigg{)}\\
    &= \Lq^2 \grad \cdot \overline{\mathbb{P}}_{\approx \laqone} \sum_{k+k'\neq 0} \Xl \Xlprime \bigg{(} \left( \left[ \Pgradk, \aklexp \right](\expk ik)\right) \otimes \left( \Pgradkbarperpprime\left(\overline{\wklprime}^\perp\right) \right) \bigg{)}\\
    &\qquad  +\Lq^2 \grad \cdot \overline{\mathbb{P}}_{\approx \laqone} \sum_{k+k'\neq 0} \Xl \Xlprime \bigg{(} \left(  \wkl \right) \otimes \left( \left[ \Pgradkbarperpprime, \aklexpprime \right](\expkprime i\kbarperpprime)\right) \bigg{)}\\
    &\qquad  +\Lq^2 \grad\cdot \overline{\mathbb{P}}_{\approx \laqone} \sum_{k+k'\neq 0} \Xl \Xlprime \bigg{(} \left( \wkl \right) \otimes \left( \overline{\wklprime}^\perp\right) \bigg{)}\\
    &:= \Lq^2 \grad \cdot \left( O_{high,1} + O_{high,2} + O_{high,3}\right)
\end{align*}
The terms $O_{high,1}$ and $O_{high,2}$ are simpler to analyze, while the analysis of $O_{high,3}$ is more delicate and will be separated into its own lemma. 

Calculating the amplitude of $O_{high,1}$ and $O_{high,2}$, we apply \cref{olowstuff} to see that
\begin{align*}
    \left\| O_{high,1} \right\|_{C^0} + \left\| O_{high,2} \right\|_{C^0} &\lesssim \left\| \left[ \Pgradk, \aklexp \right](\expk ik) \right\|_{C^0} \left\|\Pgradkbarperpprime\left(\overline{\wklprime}^\perp \right) \right\|_{C^0} \\
    &\qquad\qquad + \| \wkl \|_{C^0} \left\| \left[ \Pgradkbarperpprime, \aklexpprime \right](\expkprime i\kbarperpprime) \right\|_{C^0}\\
    &\lesssim \dqone \laqone^{-\beta}\\
    &\leq \eta\dqtwo.
\end{align*}
Then we separate $\grad \cdot O_{high,1}$ and $\grad \cdot O_{high,2}$ using the projection operators $\Pgradbar$ and $\Pgradbarperp$ as
\begin{align*}
    \Lq^2 \grad \cdot \left( O_{high,1} + O_{high,2} \right) = \Lq^2 \left( \Pgradbar \left( \grad \cdot (O_{high,1} + O_{high,2}) \right) + \Pgradbarperp \left( \grad \cdot (O_{high,1} + O_{high,2}) \right) \right).
\end{align*}
Since applying $\Pgradbar$ gives a vector field with three components, the first two of which are the horizontal gradient $\grad$ of a scalar function, the first term can be plugged into the inverse divergence $\D$ and absorbed in $\Mdot_{O,high}$. Applying $\Pgradbarperp$ yields a vector field with no third component whose first two components are the perpendicular gradient $\grad^\perp$ of a scalar function, and so we absorb this term into $\curl(Q_{high})$.  Since multiplication by $\Lq$ commutes with both operators, the claims on the spatial supports of $Q_{high}$ and $\Mdot_{O,high}$ follow.  The claims on the $C^0$ and $C^1$ norm follow as for the transport and Nash errors after using \cref{olowstuff}, applying $\D$ and $\gradperpinverse$, and using Bernstein's inequality in $x$ and $y$ to divide by $\laqone$ due to the presence of the $\Pbarqone$.

We must now calculate the material derivative of the $\Mdot_{O,high}$ portion.  Using that multiplication by $\Lq$ commutes with $\grad \cdot$, $\D$, and $\Dtq$, we can write that
\begin{align*}
    \Dtq &\left( \Lq^2 \D \circ \Pgradbar \left( \grad \cdot \left( O_{high,1} \right) \right)\right)\\
    &= \Lq^2 \left[ \Dtq,\D\circ \Pgradbar \circ (\grad\cdot) \circ \overline{\mathbb{P}}_{\approx \laqone} \right]\Bigg{(} \sum_{k+k'\neq 0} \Xl \Xlprime  \left( \left[ \Pgradk, \aklexp \right](\expk ik)\right)\\
    &\qquad \qquad \qquad \qquad \qquad \qquad \qquad \qquad \qquad \otimes \left( \Pgradkbarperpprime\left(\overline{\wklprime}^\perp\right) \right) \Bigg{)}\\
    &\quad + \Lq^2 \left(\D\circ \Pgradbar \circ (\grad\cdot) \circ \overline{\mathbb{P}}_{\approx \laqone}\right) \Dtq \Bigg{(} \sum_{k+k'\neq 0} \Xl \Xlprime  \left( \left[ \Pgradk, \aklexp \right](\expk ik)\right)\\
    &\qquad \qquad \qquad \qquad \qquad \qquad \qquad \qquad \qquad \otimes \left( \Pgradkbarperpprime\left(\overline{\wklprime}^\perp\right) \right) \Bigg{)}\\
    &=: I + II.
\end{align*}
Since $\left(\D\circ \Pgradbar \circ (\grad\cdot) \circ \overline{\mathbb{P}}_{\approx \laqone}\right)$ is an order zero operator in $x$ and $y$ satisfying the kernel assumptions of the commutator estimate \eqref{commutatorbsv1}, we can write 
\begin{align*}
    \| I \|_{C^0} &\lesssim \|\nabla\Psi_q\|_{C^1} \left\| \left[ \Pgradk, \aklexp \right](\expk ik) \right\|_{C^0} \left\|\Pgradkbarperpprime\left(\wklprime\right) \right\|_{C^0}\\
    &\lesssim \dq^\frac{1}{2} \laq \dqone \laqone^{-\beta}\\
    &\leq \dqtwo \dqone^\frac{1}{2} \laqone.
\end{align*}
Recalling that
$\|\Dtq \Xl\|_{C^0} \leq \muqone$, using parts (2) and (3) of \cref{olowstuff}, and noticing that the singular integral operator $\left(\D\circ \Pgradbar \circ (\grad\cdot) \circ \overline{\mathbb{P}}_{\approx \laqone}\right)$ is bounded on $L^\infty$ due to the frequency localizer and \cref{bernstein}, we can estimate $II$ by 
\begin{align*}
    \|II\|_{C^0} &\lesssim \left\| \Dtq \Xl \right\|_{C^0} \left\| \left[ \Pgradk, \aklexp \right](\expk ik) \right\|_{C^0} \left\|\Pgradkbarperpprime\left(\wklprime\right) \right\|_{C^0}\\
    &\qquad \qquad + \left\| \Dtq \left( \left[ \Pgradk, \aklexp \right](\expk ik) \right) \right\|_{C^0} \left\|\Pgradkbarperpprime\left(\wklprime\right) \right\|_{C^0}\\
    &\qquad \qquad + \left\| \left[ \Pgradk, \aklexp \right](\expk ik) \right\|_{C^0} \left\| \Dtq \Pgradkbarperpprime\left(\wklprime\right) \right\|_{C^0}  \\
    &\lesssim \muqone \dqone \laqone^{-\beta} + \muqone \dqone\\
    &\leq \dqtwo \dqone^\frac{1}{2}\laqone.
\end{align*}
The estimate for the material derivative of $O_{high,2}$ is similar, and we omit it.
\end{proof}

We must now show that the conclusions of \cref{oscillationhigh} hold for the third piece $O_{high,3}$ of the $O_{high}$ error. Before analyzing the $O_{high,3}$ term, we must carefully compute the divergence and determine which pieces of the resulting expression can be absorbed into the error $M_{O,high}$ and which must be absorbed into $\curl\left({Q_{high}}\right)$.  The problematic terms arise when the differential operators fall on $e^{i\laqone k\cdot x}$, since picking up a $\laqone$ makes the resulting term too large to be canceled out by future perturbations.  In the context of the Euler equations, the fact that Beltrami flows are stationary solutions provides an algebraic identity which, when deployed at the right time, shows that the problematic terms can be absorbed into the new pressure.  In our setting, the same principle holds, although its manifestation appears more technical for two reasons. First, the vector field ${Q}$ from \cref{stationarysolutions} is defined as the solution to an elliptic equation via a composition of several differential and integral operators which we must account for. Secondly, we must carefully keep track of the spatial localizer $\Lq$ throughout the decomposition and subsequent estimates. The localizer gives us building blocks which are only stationary solutions to leading order, leaving some extra error terms to estimate. 

\begin{lemma}\label{ohigh3estimates}
The conclusions of \cref{oscillationhigh} hold for $\Lq^2 \grad \cdot O_{high,3}$.
\end{lemma}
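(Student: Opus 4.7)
The strategy is to exploit the algebraic identity \eqref{algebraicidentity} from \cref{stationarysolutions}, which encodes the fact that the building blocks $\nabla\V$ form a stationary solution. This identity is precisely what eliminates the dangerous leading-order contribution arising when both derivatives in $\grad\cdot$ land on the factors $\expk$ and $\expkprime$ inside $\wkl\otimes\overline{\wklprime}^\perp$; all other terms are lower order and go into $\Mdot_{O,high}$ or $Q_{high}$ directly.

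The first step is to expand $\grad\cdot(\wkl\otimes\overline{\wklprime}^\perp)$ by the Leibniz rule and split it according to which factor is differentiated: the amplitude $\akl$, the phase correction $e^{i\laqone(\Phi_l - x)\cdot k}$, or the pure exponential $\expk$ (and analogously for the primed quantities). Any term in which at least one derivative falls on an amplitude or on a phase correction loses a factor of either $\ell^{-1}\laqone^{-1}$ (by \cref{help}(1)) or $\laqone^{-\beta}$ (by \cref{help}(3)) compared to the worst-case $O(\dqone\laqone)$. Splitting such terms via $\Id = \Pgradbar + \Pgradbarperp$, sending the gradient pieces into $\Mdot_{O,high}$ via $\D$ and the perpendicular-gradient pieces into $Q_{high}$ via $\gradperpinverse$, we gain an extra $\laqone^{-1}$ from the $\Pbarqone$ localizer and \cref{flatfrequencylocalizers,bernstein}. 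The parameter inequalities \cref{parameterinequalities}(1)--(3) then yield the required $\eta\dqtwo$, $\dqtwo\laqone$, and spatial-support bounds.

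The delicate piece is the leading-order contribution where both derivatives hit the bare exponentials. Summing over $k,k'$ in a common set $\Omega_j$ (forced by parity via $\Xl\Xlprime$, which otherwise vanishes), the resulting expression has exactly the algebraic form appearing in \eqref{algebraicidentity}, whose vanishing identifies this term as a curl, $\curl(Q_\star)$, with $Q_\star$ defined by the elliptic equation of \cref{stationarysolutions} and estimated by $\|Q_\star\|_{C^0}\lesssim\dqone$. The $\Lq^2$ prefactor is then treated using the identity at the end of \cref{stationarysolutions}: $\Lq^2\curl(Q_\star) = \curl(\Lq^2 Q_\star) - (Q_\star^2\partial_z\Lq^2,-Q_\star^1\partial_z\Lq^2,0)^t$, where the first piece is absorbed into $\curl(Q_{high})$ and the second, bounded by $l_{q+1}\dqone$, fits into $\Mdot_{O,high}$ by \cref{parameterinequalities}(3) after applying $\D$.

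The material derivative estimate follows the template of \cref{oscillationhigh}: since $\Dtq\wkl=0$, $\Dtq\akl=0$, and $\Dtq\Lq=0$, the only surviving contributions come from $\Dtq\Xl$ (bounded by $\muqone$), from $\Dtq\left(\left[\Pgradk,\aklexp\right](\expk ik)\right)$ (bounded via \cref{olowstuff}(3)), and from the commutators $[\Dtq,\D\circ\Pgradbar\circ(\grad\cdot)\circ\Pbarqone]$ and $[\Dtq,\gradperpinverse\circ\Pgradbarperp\circ(\grad\cdot)\circ\Pbarqone]$, controlled via \eqref{commutatorbsv1} in terms of $\|\grad^\perp\Psi_q\|_{C^1}\lesssim\dq^{1/2}\laq$. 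The parameter inequalities $\dq^{1/2}\laq/\laqone\leq\laqone^{-\beta}$ and $\muqone^2/\laqone\leq\dqtwo\dqone^{1/2}\laqone$ then produce the target bound $\dqtwo\dqone^{1/2}\laqone$. The principal obstacle is precisely the bookkeeping: the algebraic identity is clean only at exact pointwise leading order, so each lower-order correction (derivative on amplitude, on the warped phase, on $\Lq$, or on $\Xl$) must be independently tracked and verified to land in the correct error bucket with the correct power of $\laqone$ to close the inductive estimates.
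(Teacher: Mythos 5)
Your proposal follows the same route as the paper: split $\grad\cdot O_{high,3}$ according to where the derivative lands, treat the lower-order pieces (derivatives on amplitudes or on the warped phase $e^{i\laqone(\Phi_l-x)\cdot k}$) by the $\Pgradbar/\Pgradbarperp$ split and the 2D inverse divergence, identify the dangerous piece $O_{high,3,2}$ where the derivative hits the pure exponential, kill it via the algebraic identity \eqref{algebraicidentity}, then commute $\Lq^2$ past the curl using the formula at the end of \cref{stationarysolutions}, and finally handle $\Dtq$ through $\Dtq\wkl=\Dtq\Lq=0$, $|\Dtq\Xl|\leq\muqone$, and the commutator estimates. These are all the right ingredients and in the right order.

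One step is glossed over, however: $O_{high,3,2}$ is \emph{not} exactly $\curl(Q_\star)$. The algebraic identity kills only the top-order piece $I$ of the \emph{3D} divergence $\nabla\cdot O_{high,3,2}$, leaving a lower-order remainder $II$ coming from derivatives falling on $\fkl$. The paper makes the ``approximately a curl'' statement precise by decomposing with the 3D Helmholtz operators: $O_{high,3,2}=\Pgrad(O_{high,3,2})+\Pcurl(O_{high,3,2})=\nabla\mathcal{F}+\curl\mathcal{G}$, where $\nabla\mathcal{F}=\nabla(-\Delta)^{-1}(II)$ inherits the smallness of $II$ and must be estimated and sent into $\Mdot_{O,high}$ separately, while $\curl\mathcal{G}$ is what you call $\curl Q_\star$. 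Your write-up never names this small gradient remainder, nor does it signal that one has to pass from the 2D projections $\Pgradbar/\Pgradbarperp$ used for the other pieces to the genuinely 3D projections $\Pgrad/\Pcurl$ here (which is forced because the algebraic identity comes from $\nabla\cdot\bigl(\grad\cdot(\nabla\V\otimes\grad^\perp\V)\bigr)=-\lambda^2\,\grad^\perp\V\cdot\grad\V=0$, i.e.\ the full 3D divergence). Relatedly, because $\Pgrad$ and $\Pcurl$ are not localized at scale $\laqone^{-1}$, the commutator $[\Dtq,\Pgrad]$ needed for $\Dtq\Mdot_{O,high}$ has to be estimated via \cref{commutatorconstantin} (paying a $C^\alpha$ norm) rather than the frequency-localized bound \eqref{commutatorbsv1} you cite. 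Apart from these two points—extracting and bounding $\nabla\mathcal{F}$, and swapping in the Constantin-type commutator estimate—your proposal matches the paper's argument.
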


\begin{proof}
Calculating the divergence (in $x$ and $y$, i.e. $\grad \cdot$) and setting
$$ f_{klk'l'} = \aklexp \aklexpprime,$$
we have
\begin{align*}
    \Lq^2 \grad \cdot O_{high,3} &= \Lq^2 \grad \cdot \overline{\mathbb{P}}_{\approx \laqone} \left( \sum_{k+k'\neq 0} \Xl \Xlprime f_{klk'l'} \expk ik \otimes \expkprime i\kbarperpprime \right)\\
    &= \Lq^2 \overline{\mathbb{P}}_{\approx \laqone} \sum_{k+k'\neq 0} \Xl \Xlprime \left( \left(\expk ik\right) \otimes \left(\expkprime i\kbarperpprime \right) \right) \cdot \grad\left(f_{klk'l'}\right)\\
    &\quad+ \Lq^2  \overline{\mathbb{P}}_{\approx \laqone} \sum_{k+k'\neq 0} \Xl \Xlprime f_{klk'l'} \grad \cdot \left( \expk ik \otimes \expkprime i\kbarperpprime \right)\\
    &=: \Lq^2 O_{high,3,1} + \Lq^2 O_{high,3,2}.
\end{align*}
The analysis of $O_{high,3,1}$ is simpler due to the fact that the differential operators have landed on $\fkl$.  Estimating its amplitude, we have that
\begin{align*}
    \left\|\Lq^2 O_{high,3,1} \right\|_{C^0} &\lesssim \| \grad\fkl \|_{C^0}\\
    &\lesssim \| \grad\akl \|_{C^0} \| \akl \|_{C^0} + \| \akl \|_{C^0}^2 \left\| \grad e^{i\laqone(\Phi_l-x)\cdot k} \right\|_{C^0}\\
    &\lesssim \dqone \laq + \dqone \laqone^{1-\beta}\\
    &\leq \eta \dqtwo \laqone.
\end{align*}
Recalling that multiplication by $\Lq^2$ commutes with convolution operators and differentiation in $x$ and $y$, we then decompose $\Lq^2 O_{high,3,1}$ using the $\Pgradbar$ and $\Pgradbarperp$ operators into
$$\Lq^2 O_{high,3,1} =\Lq^2  \Pgradbar\left( O_{high,3,1} \right) + \Lq^2 \Pgradbarperp \left( O_{high,3,1} \right). $$
The first term can be plugged into the inverse divergence $\D$ and then absorbed into the error $\M_{O,high}$, while the second term has zero third component and can be absorbed into $\curl(Q_{high})$. The desired $C^0$ and $C^1$ estimates then follow arguing as before. 

We now estimate the material derivative of $\D \circ \Pgradbar \left( \Lq^2 O_{high,3,1} \right)$. We write
\begin{align*}
    \Dtq &\left(\Lq^2 \D \circ \Pgradbar \left( O_{high,3,1} \right) \right)\\
    &= \Lq^2 \Dtq \left( \D \Pgradbar \sum_{k+k'\neq 0} \overline{\mathbb{P}}_{\approx \laqone} \Xl \Xlprime \left( ik \otimes i\kbarperpprime e^{i\laqone(k+k')\cdot x}\right) \grad \fkl \right)\\
    &=\Lq^2 \left[ \Dtq, \D \circ \Pgradbar \circ \overline{\mathbb{P}}_{\approx \laqone} \right] \left( \sum_{k+k'\neq 0} \Xl \Xlprime \left( ik \otimes i\kbarperpprime e^{i\laqone(k+k')\cdot x} \right) \grad \fkl \right)\\
    &\quad + \Lq^2 \D \circ \Pgradbar \circ \overline{\mathbb{P}}_{\approx \laqone} \left( \Dtq \left( \sum_{k+k'\neq 0} \Xl \Xlprime \left( ik \otimes i\kbarperpprime e^{i\laqone(k+k')\cdot x}\right) \grad \fkl \right) \right)\\
    &=: I + II.
\end{align*}
We bound $I$ using \eqref{commutatorbsv1} and the fact that $\D \circ \Pgradbar \circ \overline{\mathbb{P}}_{\approx \laqone}$ is an order $-1$ convolution operator in $x$ and $y$ localized in frequency at $\laqone$, obtaining
\begin{align*}
    \| I \|_{C^0} &\lesssim \| \nabla \Psi_q \|_{C^1} \frac{1}{\laqone} \| \grad \fkl \|_{C^0}\\
    &\lesssim \dq^\frac{1}{2} \laq \frac{1}{\laqone} \dqtwo \laqone\\
    &\leq \dqtwo \dqone^\frac{1}{2} \laqone.
\end{align*}
Before bounding $II$, we write out
\begin{align*}
    \Xl \Xlprime \grad \fkl e^{i\laqone (k+k')\cdot x} &= \Xl \Xlprime \left( \akl \grad \aklprime + \aklprime \grad \akl \right) e^{i\laqone (k+k')\cdot x}\\
    &\qquad + i\laqone \Xl \Xlprime \akl\aklprime \left( \left( D\Phi_l - \Id \right)\kbar + \left( D\Phi_{l'}-\Id \right)\kbar' \right)e^{i\laqone (k+k')\cdot x}.
\end{align*}
Then computing $\Dtq$ of this quantity gives
\begin{align*}
    \Dtq &\left( \grad \fkl e^{i\laqone (k+k')\cdot x} \Xl \Xlprime \right)= \left( \Xl \Xlprime \right)'\left( \akl \grad \aklprime + \aklprime \grad \akl \right) e^{i\laqone (k+k')\cdot x}\\
    &\qquad + i\laqone (\Xl \Xlprime)' \akl \aklprime \left( \left( D\Phi_l - \Id \right)\kbar + \left( D\Phi_{l'}-\Id \right)\kbar' \right)e^{i\laqone (k+k')\cdot x}\\
    &\qquad - \Xl\Xlprime \left( \akl \grad\grad^\perp \Psi_q : \grad \aklprime + \aklprime \grad\grad^\perp\Psi_q : \grad\akl \right)e^{i\laqone (k+k')\cdot x}\\
    &\qquad - i\laqone \Xl \Xlprime \akl \aklprime \left( \grad\grad^\perp\Psi_q: D\Phi_l \cdot k + \grad\grad^\perp\Psi_q: D\Phi_{l'} \cdot k' \right)e^{i\laqone (k+k')\cdot x}.
\end{align*}
Then we can bound $II$ by
\begin{align*}
    \| II \|_{C^0} &\lesssim \frac{1}{\laqone}\bigg{(} \muqone \dqone \laqone + \laqone \muqone \dqone + \dq^\frac{1}{2}\laq \dqone\laq + \laqone \dqone \dq^\frac{1}{2}\laq \bigg{)}\\
    &\leq \dqtwo \dqone^\frac{1}{2}\laqone.
\end{align*}

We now move to the decomposition and estimation of $\Lq^2 O_{high,3,2}$. While in general projecting a vector field onto gradients using $\Pgrad$ induces no gain in regularity, the highest frequency terms in $O_{high,3,2}$ belong to the kernel of the divergence operator.  To see this, let us compute the divergence (now in $x$, $y$, and $z$, i.e. $\nabla\cdot$) of $O_{high,3,2}$:
\begin{align*}
    \nabla \cdot &\left( \overline{\mathbb{P}}_{\approx \laqone} \sum_{k+k'\neq 0} f_{klk'l'} \grad \cdot \left( \expk ik \otimes \expkprime i\kbarperpprime \right) \right)\\
    &= \overline{\mathbb{P}}_{\approx \laqone} \sum_{k+k'\neq 0} f_{klk'l'} \left( i \kbarperpprime \cdot ik \right) (\laqone)^2 e^{i\laqone(k+k')\cdot x} ik \cdot i(k+k')\\
    &\quad + \overline{\mathbb{P}}_{\approx \laqone} \sum_{k+k'\neq 0} i\kbarperpprime \cdot ik \left( \laqone e^{i\laqone(k+k')\cdot x}\right) \nabla\left(f_{klk'l'}\right) \cdot ik \\
    &=: I + II.
\end{align*}
Since the sum is over $k\in \Omega^1$, $k'\in \Omega^2$ where the parity of $l'$ and $l$ matches that of the corresponding sets $\Omega^i$ to which $k$ and $k'$ belong, the coefficients $\fkl$ allow for the application of the algebraic identity \eqref{algebraicidentity} from \cref{stationarysolutions}. Therefore, $I$ is equal to zero pointwise in $\torus$, showing that the problematic terms are annihilated by the divergence. Then we can write that
\begin{align*}
    \nabla \mathcal{F} &:= \Pgrad \left( O_{high,3,2} \right)\\
    &= \nabla \circ (-\Delta)^{-1} \circ (\nabla \cdot) \left( O_{high,3,2} \right)\\
    &= \nabla \circ (-\Delta)^{-1} (II).
\end{align*}
Although the third component of the frequency support of $\mathcal{F}$ is not compact, the first two components are supported in an annulus centered around $\laqone$, and so Bernstein's inequality gives that
\begin{align*}
    \| \nabla \mathcal{F} \|_{C^0} &\lesssim \frac{1}{\laqone} \| II \|_{C^0}\\
    &\lesssim \frac{1}{\laqone} \laqone \| \fkl \|_{C^1}\\
    &\leq \eta \dqtwo \laqone.
\end{align*}
Conversely, after setting $\mathcal{G}:= \Pcurl (O_{high,3,2})$, we have that
\begin{align*}
    \left\| \curl (\mathcal{G}) \right\|_{C^0} &= \left\| \Pcurl (O_{high,3,2}) \right\|_{C^0}\\
    &\leq \left\| O_{high,3,2} \right\|_{C^0} + \left\| \nabla \mathcal{F} \right\|_{C^0}\\
    &\lesssim  \laqone \| \fkl \|_{C^0} + \left\| \nabla \mathcal{F} \right\|_{C^0}\\
    &\lesssim \dqone \laqone.
\end{align*}
Furthermore, since $\mathcal{G}=(-\Delta)^{-1} \circ \curl (O_{high,3,2})$ is given by an operator of order $-1$ applied to $O_{high,3,2}$, by the presence of $\overline{\mathbb{P}}_{\approx \laqone}$ and Bernstein's inequality we see that $\| \mathcal{G} \|_{C^0} \lesssim \dqone$.

We are now ready to decompose $\Lq^2 O_{high,3,2}$.  
\begin{align*}
    \Lq^2 O_{high,3,2} &= \Lq^2 \Pgrad\left(O_{high,3,2}\right) + \Lq^2 \Pcurl(O_{high,3,2})\\
    &= \Lq^2 \nabla \mathcal{F} + \Lq^2 \curl( \mathcal{G} ) \\
    &= \begin{bmatrix}
           \partial_x\left(\Lq^2 \mathcal{F}\right)\\
           \partial_y\left(\Lq^2 \mathcal{F}\right)\\
           \Lq^2 \partial_z \mathcal{F}
         \end{bmatrix} + 
    \begin{bmatrix}
           -\mathcal{G}^2 \partial_z(\Lq^2)\\
           \mathcal{G}^1 \partial_z(\Lq^2)\\
           0
         \end{bmatrix} + \curl\left(  \Lq^2 \mathcal{G} \right).
\end{align*}
The first term can now be absorbed into the error $M_{O,high}$ after applying $\D$, while the third term can be absorbed into $\curl(Q_{high})$. The estimates on the amplitudes, $C^1$ norms, and spatial supports follow from the above estimates on $\mathcal{F}$ and $\mathcal{G}$. Before addressing the second term, which we shall denote
\begin{align*}
    \mathcal{L}:=\begin{bmatrix}
           -\mathcal{G}^2 \partial_z(\Lq^2)\\
           \mathcal{G}^1 \partial_z(\Lq^2)\\
           0
         \end{bmatrix}
\end{align*}
let us calculate the material derivative of the first.
\begin{align*}
    \Dtq &\left( \D \left( \Lq^2 \nabla \mathcal{F} \right) \right) = \Lq^2 \Dtq \left( \D (\nabla \mathcal{F}) \right)\\
    &= \Lq^2 \left[ \Dtq, \D \right](\nabla \mathcal{F}) + \Lq^2 \D \left( \Dtq (\nabla \mathcal{F}) \right).
\end{align*}
We can bound the first term using \eqref{commutatorbsv1} and the fact that $\nabla\mathcal{F}$ is supported in an annulus of radius $\laqone$ in $x$ and $y$ frequencies by 
\begin{align*}
\left\| \Lq^2 \left[ \Dtq, \D \right](\nabla \mathcal{F}) \right\|_{C^0} &\lesssim \frac{1}{\laqone} \| \nabla \Psi_q \|_{C^1} \| \nabla \mathcal{F} \|_{C^0} \\
&\lesssim \dq^\frac{1}{2}\laq \frac{1}{\laqone} \dqtwo \laqone\\
&\leq \dqtwo \dqone^\frac{1}{2} \laqone.
\end{align*}
We decompose the second term further as
\begin{align*}
    \Lq^2 \D \left( \Dtq (\nabla \mathcal{F} ) \right) &= \Lq^2 \D \left( \left[ \Dtq, \Pgrad \right](O_{high,3,2})\right) + \Lq^2 \D \left( \Pgrad \left( \Dtq (O_{high,3,2}) \right) \right).
\end{align*}
Using the fact that $O_{high,3,2}$ is supported in an annulus of size $\laqone$ in $x$ and $y$ frequencies, we can bound the first term using the commutator estimate from \cref{commutatorconstantin} by
\begin{align*}
    \left\| \Lq^2 \D \left( \left[ \Dtq, \Pgrad \right](O_{high,3,2})\right) \right\|_{C^0} &\lesssim \frac{1}{\laqone} \| \nabla \Psi_q \|_{C^{1+\alpha}} \| O_{high,3,2} \|_{C^\alpha}\\
    &\lesssim \frac{1}{\laqone} \dq^\frac{1}{2} \laq^{1+\alpha}\dqone \laqone^{1+\alpha}\\
    &\leq \dqtwo \dqone^\frac{1}{2} \laqone
\end{align*}
if $\alpha$ is small enough. Then for the second term, we can write
\begin{align*}
    &\left\| \Lq^2 \D \left( \Pgrad \left( \Dtq (O_{high,3,2}) \right) \right) \right\|_{C^0} \\
    &\qquad = \left\| \Lq^2 \D \circ \Pgrad \left( \left[ \Dtq, \overline{\mathbb{P}}_{\approx \laqone} \right] \left( \sum_{k+k'\neq 0} \akl \aklprime e^{i\laqone \Phi_l \cdot k} e^{i\laqone \Phi_l \cdot k'} \laqone k\otimes \kbarperpprime (k+k') \right) \right) \right\|_{C^0}\\
    &\qquad \lesssim \frac{1}{\laqone} \| \nabla \Psi_q \|_{C^1} \laqone \| \akl \|_{C^0}^2\\
    &\qquad \leq \frac{1}{\laqone}\dq^\frac{1}{2}\laq \laqone \dqone \\
    &\qquad \leq \dqtwo \dqone^\frac{1}{2} \laqone.
    \end{align*}

We now return to $\mathcal{L}$. Since $\mathcal{L}$ has derivatives on $\Lq$ rather than $\mathcal{G}$, it is significantly smoother than $\curl(\mathcal{G})$. We decompose $\mathcal{L}$ as
$$ \mathcal{L} = \Pgradbar \left( \mathcal{L} \right) + \Pgradbarperp \left( \mathcal{L} \right). $$
Then $\Pgradbar \left( \mathcal{L} \right)$ is absorbed into the error $M_{O,high}$ after applying $\D$, while $\Pgradbarperp \left( \mathcal{L} \right)$ can be absorbed into the curl since it has zero third component. Estimating the amplitude of $\mathcal{L}$, we can write 
$$ \| \mathcal{L} \|_{C^0} \lesssim \| \partial_z \Lq \|_{C^0} \| \mathcal{G} \|_{C^0} \leq l_{q+1} \dqone \leq \eta \dqtwo \laqone, $$ 
and thus the desired $C^0$ and $C^1$ estimates follow from Bernstein's inequality and the fact that $\mathcal{L}$ is compactly supported in frequency in $x$ and $y$.

Finally, it remains to estimate the material derivative of $\D\Pgradbar \mathcal{L}$.
\begin{align*}
    &\left\| \partial_z(\Lq^2) \Dtq \left( \D \circ \Pgradbar \circ (-\Delta)^{-1} \circ \curl \circ \overline{\mathbb{P}}_{\approx \laqone} \left( \sum_{k+k'\neq 0} \akl \aklprime e^{i\laqone \left(\Phi_l \cdot k+ \Phi_l \cdot k'\right)} k\otimes \kbarperpprime (k) \right) \right) \right\|_{C^0}\\
    &\leq l_{q+1} \left\| \left[ \Dtq, \D \circ \Pgradbar \circ (-\Delta)^{-1} \circ \curl \circ \overline{\mathbb{P}}_{\approx \laqone} \right] \left( \sum_{k+k'\neq 0} \akl \aklprime e^{i\laqone \left(\Phi_l \cdot k+ \Phi_l \cdot k'\right)} k\otimes \kbarperpprime (k) \right) \right\|_{C^0}\\
    &\qquad \qquad \lesssim l_{q+1} \frac{1}{\laqone^2} \| \nabla \Psi_q \|_{C^{1+\alpha}} \| O_{high,3,2} \|_{C^\alpha}\\
    &\qquad \qquad \lesssim  l_{q+1} \frac{1}{\laqone^2} \dq^\frac{1}{2} \laq^{1+\alpha} \dqone \laqone^{1+\alpha}\\
    &\qquad \qquad \leq \dqtwo \dqone^\frac{1}{2}\laqone.
\end{align*}
We remark that estimating the commutator of $\Dtq$ and $\D \circ \Pgradbar \circ (-\Delta)^{-1} \circ \curl \circ \Pbarqone$ can be done following the ideas of the proof of \eqref{commutatorbsv1} if one is willing to pay a $C^\alpha$ norm on $\grad^2\Psi_q$ and $O_{high,3,2}$, which is acceptable considering that $l_{q+1}$ is much smaller than $\laqone$.
\end{proof}

\subsubsection{Estimates for \texorpdfstring{O\textsubscript{low}}{O low}}

$O_{low}$ is given by
$$O_{low} = \Lq^2 \grad \cdot \left( \sum_{k+k'=0} \Xl\Xlprime \Pgradk\left(\wkl\right) \otimes  \Pgradkbarperpprime\left(\overline{\wklprime}^\perp \right) \right) + \Lq^2 \grad \cdot \Mdot_q. $$
Recall that the choice of vectors $k$ implies that if $k=-k'$, then $l$ and $l'$ have the same parity. For $l$ and $l'$ with the same parity, $\sum_{l'}\Xl\Xlprime = \Xl^2$. In order to isolate the terms which cancel out $\grad\cdot\Mdot_q$, we rewrite $O_{low}$ as 
\begin{align*}
    O_{low} &= \Lq^2 \grad \cdot \Bigg{(} \sum_{k+k'=0} \Xl^2 \left( \left[ \Pgradk, \akl e^{i\laqone(\Phi_l-x)\cdot k} \right](e^{i\laqone k\cdot x}ik) + \wkl \right) \\
    &\qquad \qquad \qquad \otimes \left( \left[ \Pgradkbarperpprime, \aklprime e^{i\laqone(\Phi_{l'}-x)\cdot k'} \right](e^{i\laqone k'\cdot x}ik') + \overline{\wklprime}^\perp \right)\Bigg{)}\\
    &\qquad + \Lq^2 \grad \cdot \Mdot_q \\
    &=: O_{low,1} + O_{low,2} + O_{low,3} + O_{low,4} + O_{low,5}
\end{align*}
where 
\begin{align*}
O_{low,1} := \Lq^2 \grad \cdot \Bigg{(} \left( \sum_{k+k'=0} \Xl^2\left[ \Pgradk,\aklexp \right](\expk ik) \right) \otimes \Pgradkbarperp(\overline{\wklprime}^\perp) \Bigg{)},
\end{align*}
$$ O_{low,2} := \Lq^2 \grad \cdot \Bigg{(}  \sum_{k+k'=0} \Xl^2 \wkl   \otimes \left( \left[ \Pgradkbarperpprime, \aklexpprime \right](\expkprime i\kbar'^\perp) \right) \Bigg{)},$$
$$ O_{low,3} = \Lq^2 \grad \cdot \left( \Mdot_q - \Mdot_{q,\ell} \right) $$
$$ O_{low,4} := \Lq^2 \grad \cdot \Bigg{(} \sum_{k+k'=0} \Xl^2 \left( \wkl \otimes \overline{\wklprime}^\perp - M_{q,l} \right) \Bigg{)}$$
$$ O_{low,5} := \Lq^2 \grad \cdot \left( \sum_l \Xl^2 \left( \M_{q,\ell} - \M_{q,l} \right) \right). $$
We see that by construction, 
$$ O_{low,4} = \Lq^2 \grad \cdot \left( \frac{1}{2}\sum_{k} \Xl^2 \left( |\akl|^2 k \otimes \kbar^\perp - M_{q,l} \right) \right) = 0,$$
giving us the required cancellation. Thus, it remains to decompose and estimate $O_{low,1}$, $O_{low,2}$, and $O_{low,3}$, and $O_{low,5}$. We state the results as follows.
\begin{lemma}\label{oscillationlow}
The low frequency portion of the oscillation error $O_{low}$
is equal to
$$ \curl\left({Q}_{low}\right) + \grad \cdot \Mdot_{O,low} $$
with the estimates
$$ \| {Q}_{low} \|_{C^0} \leq \dqone, \qquad \| {Q}_{low} \|_{C^1} \leq \dqone \laqone $$
$$ \| \Mdot_{O,low} \|_{C^0} \leq \eta\dqtwo, \qquad \| \Mdot_{O,low} \|_{C^1} \leq \dqtwo \laqone, \qquad \| \Dtq \Mdot_{O,low} \|_{C^0} \leq \dqtwo \dqone^\frac{1}{2} \laqone. $$
Furthermore, $Q_{low}$ and $\Mdot_{O,low}$ are supported in the set $$\mathbb{T}^2 \times \left[ \frac{1}{l_{q+1}}, 2\pi - \frac{1}{l_{q+1}} \right].$$
\end{lemma}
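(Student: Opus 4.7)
With $O_{low,4} = 0$ established by construction (via the amplitude identity from \cref{frequencymodes}(5) applied to $M_{q,l} = \rho_l M_j - \Mdot_{q,l}$), the task reduces to handling the four remaining pieces $O_{low,1}$, $O_{low,2}$, $O_{low,3}$, and $O_{low,5}$. For each, the strategy is: compute the divergence in $x,y$, decompose the resulting vector field via $\mathrm{Id} = \Pgradbar + \Pgradbarperp$, absorb the $\Pgradbar$-piece into $\Mdot_{O,low}$ through $\D$, and absorb the $\Pgradbarperp$-piece into $\curl(Q_{low})$ through inversion of $\grad^\perp$. The spatial-support claim is automatic: $\Lq$ depends only on $z$ and every operator in play is a convolution in $x,y$ only, so multiplication by $\Lq^2$ commutes through the entire construction and no contribution leaks outside $\mathbb{T}^2 \times [1/l_{q+1}, 2\pi - 1/l_{q+1}]$.

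\textbf{Commutator terms $O_{low,1}$ and $O_{low,2}$.} These are structurally analogous to $O_{high,1}$ and $O_{high,2}$ from \cref{oscillationhigh}. The key input is \cref{olowstuff}(2), giving
$$\bigl\| [\Pgradk,\akl e^{i\laqone(\Phi_l-x)\cdot k}](\expk\,ik) \bigr\|_{C^0} \lesssim \dqone^{1/2}\laqone^{-\beta},$$
so each tensor inside $\grad\cdot$ has $C^0$ norm $\lesssim \dqone\laqone^{-\beta}$. After applying the (order-zero) compositions $\D\circ\Pgradbar\circ(\grad\cdot)$ and $\gradperpinverse\circ\Pgradbarperp\circ(\grad\cdot)$ — whose $C^0$ boundedness in our setting is supplied by the frequency localizer $\Pbarqone$ together with \cref{bernstein} — we obtain the required $C^0$ bounds on $\Mdot_{O,low}$ and $Q_{low}$. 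The $C^1$ estimates come from differentiating the tensors (paying at most one $\laqone$) and arguing identically. For the material derivatives, we repeat the scheme of \cref{oscillationhigh}: the commutator estimate \eqref{commutatorbsv1} handles $[\Dtq, \D\circ\Pgradbar\circ(\grad\cdot)]$, while $\Dtq$ of the commutator $[\Pgradk,\akl e^{i\laqone(\Phi_l-x)\cdot k}](\expk\,ik)$ is bounded by \cref{olowstuff}(3).

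\textbf{Direct-absorption terms $O_{low,3}$ and $O_{low,5}$.} Because $\Mdot_q$ has zero third row by hypothesis, both $z$-mollification and transport by the horizontal flow $\grad^\perp\Psi_q$ preserve this structure, and no commutator decomposition is needed: the matrices $\Lq^2(\Mdot_q - \Mdot_{q,\ell})$ and $\Lq^2 \sum_l \Xl^2 (\Mdot_{q,\ell} - \Mdot_{q,l})$ can be placed directly into $\Mdot_{O,low}$. For $O_{low,3}$, the standard mollification estimate $\|\Mdot_q - \Mdot_{q,\ell}\|_{C^0} \lesssim \ell\,\|\Mdot_q\|_{C^1}$ combined with the parameter choices of \cref{parameterinequalities} gives the $\eta\dqtwo$ bound. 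For $O_{low,5}$, we use $\Dtq \Mdot_{q,l} = 0$ together with $\Mdot_{q,l}(\cdot,l/\muqone) = \Mdot_{q,\ell}(\cdot,l/\muqone)$; integrating along characteristics over $\supp\Xl$ (a time interval of length $\leq \muqone^{-1}$) yields
$$\|\Mdot_{q,\ell} - \Mdot_{q,l}\|_{C^0(\supp\Xl)} \lesssim \muqone^{-1}\,\|\Dtq\Mdot_{q,\ell}\|_{C^0}.$$
Since $\Dtq$ contains no $\partial_z$, it commutes with the $z$-mollifier $\phi_q$ up to an error controlled by $\ell\laq \leq 1$, so $\|\Dtq\Mdot_{q,\ell}\|_{C^0} \lesssim \dqone\dq^{1/2}\laq$, and then \cref{parameterinequalities}(1) gives the desired $\dqone\laqone^{-\beta} \leq \eta\dqtwo$.

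\textbf{Main obstacle.} The tightest estimate is the material-derivative bound for the $O_{low,5}$ contribution. Using $\Dtq\Lq = 0$ and $\Dtq\Mdot_{q,l} = 0$, we reduce $\Dtq$ applied to the $O_{low,5}$ piece of $\Mdot_{O,low}$ to bounding
$$\Lq^2 \sum_l \Bigl(2\Xl(\partial_t\Xl)(\Mdot_{q,\ell} - \Mdot_{q,l}) + \Xl^2\,\Dtq\Mdot_{q,\ell}\Bigr).$$
Here $|\partial_t\Xl| \lesssim \muqone$ precisely cancels the $\muqone^{-1}$ gain from the transport step, so each summand is of size $\|\Dtq\Mdot_{q,\ell}\|_{C^0} \lesssim \dqone\dq^{1/2}\laq$, which by \cref{parameterinequalities}(4) is exactly $\leq \dqtwo\dqone^{1/2}\laqone$. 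The remaining bookkeeping challenge is to verify that the iterated commutators arising in the material-derivative estimates for $O_{low,1}, O_{low,2}$ (analogous to the $T_{3,1}$ term of \cref{transporterror} or the corresponding step in \cref{oscillationhigh}) do not exceed this same target; these follow from nested applications of \eqref{commutatorbsv1} together with the $\muqone \leq \dqtwo^{1/2}\dqone^{-1/2}\laqone$ style rearrangements used throughout the preceding sections.
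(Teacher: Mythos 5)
Your overall architecture matches the paper's: $O_{low,4}=0$ by construction, $O_{low,1}$ and $O_{low,2}$ are split via $\Id = \Pgradbar + \Pgradbarperp$ and absorbed into $\Mdot_{O,low}$ (through $\D$) and $\curl(Q_{low})$ (through $\gradperpinverse$), and $O_{low,3}, O_{low,5}$ are placed directly into $\Mdot_{O,low}$ with bounds from mollification, transport, and the parameter inequalities. Your handling of $O_{low,3}$, $O_{low,5}$, and the material-derivative term for $O_{low,5}$ is essentially identical to the paper's and correct.

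However, there is a genuine gap in your treatment of $O_{low,1}$ and $O_{low,2}$. You write that the $C^0$ boundedness of $\D\circ\Pgradbar\circ(\grad\cdot)$ and $\gradperpinverse\circ\Pgradbarperp\circ(\grad\cdot)$ ``is supplied by the frequency localizer $\Pbarqone$ together with Bernstein'' and that ``the commutator estimate \eqref{commutatorbsv1} handles $[\Dtq, \D\circ\Pgradbar\circ(\grad\cdot)]$.'' Both claims break down precisely because this is the \emph{low}-frequency piece. Since $k+k'=0$, each tensor $[\Pgradk,\aklexp](\expk ik)\otimes\Pgradkbarperpprime(\overline{\wklprime}^\perp)$ has its $x,y$ frequency support in a ball of radius $\approx\laqone^{1-\beta}$ centered at the origin, \emph{not} in an annulus at $\laqone$. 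So inserting $\Pbarqone = \overline{\mathbb{P}}_{\approx\laqone}$ is not an identity on these terms (it would annihilate most of them), and \cref{bernstein} for an order-$0$ multiplier requires annular frequency support (case (3)); with ball support and $s=0$, Bernstein gives nothing. For the same reason, the hypothesis of \eqref{commutatorbsv1} — a kernel localized at a single length scale $\lambda^{-1}$ — is not met without artificial insertion of a low-pass filter, and even then one does not obtain a useful $\lambda^s$ gain at $s=0$.

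The paper sidesteps this by paying a small H\"older exponent instead. Since $\D\Pgradbar\grad\cdot$ and $\gradperpinverse\Pgradbarperp\grad\cdot$ are order-$0$ Calder\'on--Zygmund operators (hence bounded from $C^\alpha$ to $C^0$ by \cref{t2operators}), one bounds
$\|\D\Pgradbar O_{low,1}\|_{C^0}$ by
$\|[\Pgradk,\aklexp](\expk ik)\|_{C^0}\|\Pgradkbarperpprime(\overline{\wklprime}^\perp)\|_{C^\alpha} + \|[\Pgradk,\aklexp](\expk ik)\|_{C^\alpha}\|\Pgradkbarperpprime(\overline{\wklprime}^\perp)\|_{C^0} \lesssim \dqone\laqone^{\alpha-\beta}$,
which is $\leq\eta\dqtwo$ because $\alpha\ll\beta$. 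Correspondingly, the outer commutator $[\Dtq, \D\Pgradbar\grad\cdot]$ is estimated using \cref{commutatorconstantin} (the H\"older version for unlocalized Calder\'on--Zygmund operators), giving $\|\nabla\Psi_q\|_{C^{1+\alpha}}\|\Omega\|_{C^\alpha}\lesssim\dq^{1/2}\laq^{1+\alpha}\dqone\laqone^{\alpha}$, and again $\alpha\ll\beta$ closes the estimate. You should replace your appeal to $\Pbarqone$/Bernstein/\eqref{commutatorbsv1} with this $\alpha$-H\"older bookkeeping; otherwise the argument as written does not apply to the low-frequency pieces.
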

\begin{proof}
We start by decomposing $O_{low,1}$ as 
$$ O_{low,1} = \Pgradbar \left( O_{low,1} \right) + \Pgradbarperp \left( O_{low,1} \right). $$
As $\Pgradbar$ and $\Pgradbarperp$ are convolution operators in $x$ and $y$ only, they commute with multiplication by $\Lq^2$, and the claim on the spatial supports follows. The first term is absorbed into $M_{O,low}$ after applying $\D$, while the second term is absorbed into $\curl( Q_{low})$ by inverting $\grad^\perp$. We estimate the $\Pgradbar$ portion now.
\begin{align*}
    \|\D \Pgradbar &O_{low,1}\|_{C^0}\\ 
    &\leq \sup_{k+k'=0} \bigg{\|} \D\Pgradbar \grad \cdot \bigg{(}  \left[ \Pgradk, \aklexp \right](\expk k) \otimes \Pgradkbarperpprime(\overline{\wklprime}^\perp)\bigg{)}\bigg{\|}_{C^0(\supp \Xlprime)}\\
    &\leq \left\| \left[ \Pgradk, \aklexp \right](\expk k) \right\|_{C^0} \left\| \Pgradkbarperpprime\left( \overline{\wklprime}^\perp \right) \right\|_{C^\alpha} \\
    &\qquad + \left\| \left[ \Pgradk, \aklexp \right](\expk k) \right\|_{C^\alpha} \left\| \Pgradkbarperpprime\left( \overline{\wklprime}^\perp \right) \right\|_{C^0} \\
    &\lesssim \dqone \laqone^{\alpha-\beta}\\
    &\leq \eta \dqtwo
\end{align*}
after using \cref{help} and assuming $\alpha$ is sufficiently small. The estimate for the $\Pgradbarperp$ portion follows by simply replacing $\D \circ \Pgradbar$ with $(-\lap)^{-1} \circ (\grad^\perp \cdot)$ in the above argument.

To calculate the $C^1$ norms, we write
\begin{align*}
    &\left\| \nabla \D \Pgradbar O_{low,1} \right\|_{C^0} \lesssim \left\| \partial_z (\Lq^2) \right\|_{C^0}  \\
    &\qquad\quad \times \sup_{k+k'=0} \bigg{\|} \D\Pgradbar \grad \cdot \bigg{(}  \left[ \Pgradk, \aklexp \right](\expk k) \otimes \Pgradkbarperpprime(\overline{\wklprime}^\perp)\bigg{)}\bigg{\|}_{C^0(\supp \Xlprime)} \\
    &\qquad + \sup_{k+k'=0} \bigg{\|} \D\Pgradbar \grad \cdot \bigg{(}  \left[ \Pgradk, \aklexp \right](\expk k) \otimes \Pgradkbarperpprime(\overline{\wklprime}^\perp)\bigg{)}\bigg{\|}_{C^1(\supp \Xlprime)}\\
    &\quad \leq \eta \dqtwo \laqone
\end{align*}
after arguing as above. The decomposition and estimate for $O_{low,2}$ is analogous, and we omit the calculation.

Note that $O_{low,3} = \Lq^2 \grad \cdot\left( \Mdot_q - \Mdot_{q,\ell}\right)$ is already the divergence of a suitable matrix. To estimate the $C^0$ norm, standard mollification estimates give
\begin{align*}
    \left\| \Mdot_q - \Mdot_{q,\ell} \right\|_{C^0} &\leq \dqone \laq {\laq^{-\frac{3}{4}}\laqone^{-\frac{1}{4}}}\\
    &\leq \eta\dqtwo.
\end{align*}
The $C^1$ norm is then easily controlled by $2 \left\| \Mdot_q \right\|_{C^1} = 2\dqone \laq \leq \dqtwo \laqone$, showing the desired result.  

For $O_{low,5}$ we recall that for $t=\frac{l}{\muqone}$,
$$\Mdot_{q,\ell}(t) = \Mdot_{q,l}(t)$$ for all $x \in \torus$, and that $$\Dtq(\Mdot_{q,\ell} - \Mdot_{q,l}) = \Dtq \Mdot_{q,\ell}.$$ 
Before calculating the $C^0$ and $C^1$ norm, let us calculate the material derivative of $\M_{q,\ell}$.
\begin{align*}
    \Dtq \M_{q,\ell} &= \left( \Dtq \M_q \right) \ast \phi_q + \grad^\perp\Psi_q \cdot \grad \M_{q,\ell} - \left( \grad^\perp\Psi_q \cdot \grad \M_q \right) \ast \phi_q
\end{align*}
A simple calculation shows that the commutator 
\begin{align*}
    \left\| \left[ \grad^\perp\Psi_q \cdot \grad, \phi_q \ast \right]( \M_{q} ) \right\|_{C^0} &\leq \| \grad^\perp\Psi_q \|_{C^1} \| \M_q \|_{C^1} \ell,
\end{align*}
thus showing that 
$$\left\| \Dtq \M_{q,\ell} \right\|_{C^0} \leq \dqone \laq \dq^\frac{1}{2} \laq \ell \leq \dqtwo \dqone^\frac{1}{2} \laqone. $$
In addition, we obtain that 
\begin{align*}
    \left\| \Dtq \M_{q,\ell} \right\|_{C^1} \leq \dqone\dq^\frac{1}{2}\laq^2.
\end{align*}
Applying the transport estimate from \cref{transport} and the inductive assumption \eqref{inductivetransport}, we find
\begin{align*}
 \left\| \Lq^2 \sum_l \Xl^2 \left( \Mdot_{q,\ell} - \Mdot_{q,l} \right) \right\|_{C^0} & \leq \sup_{l} \left\|\Mdot_{q,\ell} - \Mdot_{q,l}\right\|_{C^0(\supp \Xl)}\\
 &\leq \frac{1}{\muqone} \dqone \dq^\frac{1}{2} \laq \\
 &\leq \eta\dqtwo.
\end{align*}
Applying the transport estimate \cref{transport} then shows that
\begin{align*}
    \left\| \M_{q,\ell} - \M_{q,l} \right\|_{C^1} &\leq \frac{1}{\muqone} \dqone \dq^\frac{1}{2} \laq^2 \\
    &=\dqone^\frac{3}{4} \dq^\frac{1}{4} \laq^\frac{3}{2} \laqone^{-\frac{1}{2}}\\
    &\leq \dqtwo \laqone,
\end{align*}
providing the desired $C^1$ bound after recalling that $\partial_z \Lq$ is small.

Moving now to the material derivative, we have that 
\begin{align*}
    \Dtq M_{O,low}&= \Dtq \D \Pgradbar \grad \cdot \Bigg{(} \Lq^2 \sum_{k+k'=0} \Xl^2 \left[ \Pgradk, \aklexp \right](\expk ik) \otimes \Pgradkbarperpprime(\overline{\wklprime}^\perp)  \\
    & \qquad\qquad\qquad\qquad + \Lq^2 \sum_{k+k'=0} \Xl^2 \wkl \otimes \left[ \Pgradkbarperpprime, \aklexpprime \right](\expkprime i\kbar'^\perp) \Bigg{)}\\
    &\qquad\qquad + \Dtq \left( \M_q - \M_{q,\ell} +  \Lq^2 \sum_l \Xl^2\left( \Mdot_{q,\ell} - \Mdot_{q,l} \right) \right) \\
    &=: \Dtq \D \Pgradbar \grad \cdot \Omega + \Dtq \left( \M_q - \M_{q,\ell} +  \Lq^2 \sum_l \Xl^2\left( \Mdot_{q,\ell} - \Mdot_{q,l} \right) \right).
\end{align*}
The second and third terms are the easiest to analyze and we dispense with it first. Since $\Dtq \Lq^2 = \Dtq \Mdot_{q,l} = 0$, we can write that 
\begin{align*}
    \left\| \Dtq \left( \Lq^2 \sum_l \Xl^2\left( \Mdot_{q,\ell} - \Mdot_{q,l} \right) \right) \right\|_{C^0} &\leq \left\| \Dtq \Xl^2 \right\|_{C^0} \left\| \Mdot_q - \Mdot_{q,l} \right\|_{C^0} + \left\| \Dtq \Mdot_{q,\ell} \right\|_{C^0}\\
    &\leq \dqtwo \dqone^\frac{1}{2}\laqone
\end{align*}
after applying the previous estimate on $\Dtq \M_{q,\ell}$. In addition, we have that 
\begin{align*}
    \left\| \Dtq \left( \M_q -\M_{q,\ell} \right) \right\|_{C^0} \leq \dqtwo \dqone^\frac{1}{2}\laqone
\end{align*}
after applying the inductive assumption and the estimate on $\Dtq \M_{q,\ell}$.

The first step towards estimating the other term is to estimate the commutator of $\Dtq$ and $\D \Pgradbar \grad \cdot$ applied to $\Omega$ using \cref{commutatorconstantin}. We can write
\begin{align*}
    &\left\| \left[ \Dtq, \D\Pgradbar\grad\cdot \right] \left( \Omega \right) \right\|_{C^0} \leq \|\nabla\Psi_q\|_{{C}^{1+\alpha}} \| \Omega \|_{{C}^\alpha}\\
    &\qquad \qquad \leq \| \nabla\Psi_q \|_{{C}^{1+\alpha}} \Bigg{(} \left\| \left[ \Pgradk, \aklexp \right](\expk ik) \right\|_{{C}^\alpha} \left\| \Pgradkbarperpprime(\overline{\wklprime}^\perp) \right\|_{C^0}\\
    &\qquad\qquad\qquad + \left\| \left[ \Pgradk, \aklexp \right](\expk ik) \right\|_{C^0} \left\| \Pgradkbarperpprime(\overline{\wklprime}^\perp) \right\|_{{C}^\alpha}\\
    &\qquad\qquad\qquad + \left\| \left[ \Pgradkbarperpprime, \aklexp \right](\expk ik') \right\|_{{C}^\alpha} \left\| \wkl \right\|_{C^0}\\
    &\qquad\qquad\qquad + \left\| \left[ \Pgradkbarperpprime, \aklexp \right](\expk ik') \right\|_{C^0} \left\| \wkl \right\|_{{C}^\alpha} \Bigg{)}\\
    &\qquad \qquad \lesssim \dq^\frac{1}{2}\laq^{1+\alpha} \dqone \laqone^\alpha\\
    &\qquad \qquad \leq \dqtwo \dqone^\frac{1}{2} \laqone
\end{align*}
if $\alpha$ is small enough. Therefore, it remains to estimate 
\begin{align*}
\D \Pgradbar &\grad \cdot \Dtq \left(\Lq^2 \sum_{k+k'=0} \Xl^2 \left[ \Pgradk, \aklexp \right](\expk ik) \otimes \Pgradkbarperpprime(\overline{\wklprime}^\perp) \right) \\
& \qquad + \D \Pgradbar \grad \cdot \Dtq \left(\Lq^2 \sum_{k+k'=0} \Xl^2 \wkl \otimes \left[ \Pgradkbarperpprime, \aklexpprime \right](\expkprime i\kbar'^\perp) \right).
\end{align*}
We first simplify the above expression by noticing that 
\begin{align*}
\Dtq &\Bigg{(} \Lq^2 \sum_{k+k'=0} \Xl^2 \left[ \Pgradk, \aklexp \right](\expk ik) \otimes \Pgradkbarperpprime(\overline{\wklprime}^\perp) \\
& \qquad + \Lq^2 \sum_{k+k'=0} \Xl^2 \wkl \otimes \left[ \Pgradkbarperpprime, \aklexpprime \right](\expkprime i\kbar'^\perp) \Bigg{)} \\
&= \Dtq \left( \Lq^2 \sum_{k+k'=0} \Xl^2 \left( \Pgradk \left( \wkl \right) \otimes \Pgradkbarperpprime \left( \overline{\wklprime}^\perp \right) - \wkl \otimes \overline{\wklprime}^\perp \right) \right)\\
&= \Lq^2 \sum_{k+k'=0} \Dtq \left(\Xl^2\right) \left( \Pgradk \left( \wkl \right) \otimes \Pgradkbarperpprime \left( \overline{\wklprime}^\perp \right) - \wkl \otimes \overline{\wklprime}^\perp \right)\\
&\qquad + \Lq^2 \sum_{k+k'=0} \Xl^2 \left( \Dtq \left( \Pgradk \left( \wkl \right) \otimes \Pgradkbarperpprime \left( \overline{\wklprime}^\perp \right) \right) \right).
\end{align*}
Notice that the terms with the projection operators $\Pgradk$ and $\Pgradkbarperpprime$ are supported in an annulus in $x$ and $y$ frequencies, and so the singular integral operator $\D \Pgradbar \grad \cdot$ is bounded on $L^\infty$ for these terms by \cref{bernstein}. Then the entire expression is bounded by
\begin{align*}
    &\left\| \Dtq \left( \Xl^2 \right) \right\|_{C^0} \left( \left\| \Pgradk \left( \wkl \right) \otimes \Pgradkbarperpprime\left( \overline{\wklprime}^\perp\right) \right\|_{C^0} + \left\| \wkl \otimes \wklprime \right\|_{C^\alpha} \right)\\
    &\qquad + \left\| \Dtq \left(  \Pgradk \left( \wkl \right) \otimes \Pgradkbarperpprime\left( \overline{\wklprime}^\perp\right) \right) \right\|_{C^0} \\
    &\leq \muqone \left( \dqone + \dqone\laqone^\alpha \right) + \muqone \dqone\\
    &\leq \dqtwo \dqone^\frac{1}{2}\laqone,
\end{align*}
finishing the proof.
\end{proof}

\section{Energy Increment}\label{energyincrement}

In this section, we show that the inductive assumptions \eqref{inductiveenergyprofileone} and \eqref{inductiveenergyprofiletwo} hold with $q$ replaced by $q+1$.  The proof follows estimates of the Hamiltonian increment from \cite{BCV2018} and is thus split up into a preliminary lemma and subsequent proposition.

\begin{lemma}\label{preliminaryenergylemma}
If $t \in \supp{\Xl}$, then 
\begin{align*}
    \bigg{|} \int_{\torus} \left( |\nabla\Psi_q(t)|^2 - |\nabla\Psi_q \left(\frac{l}{\muqone}\right)|^2 \right)\bigg{|} + \bigg{|} e(t) - e\left(\frac{l}{\muqone}\right) \bigg{|} \leq \frac{\dqtwo}{16}.
\end{align*}
Furthermore, for $\rho_l \neq 0$
\begin{align*}
    | \rho(t) -\rho_l | \leq \frac{\dqtwo}{16}
\end{align*}
and 
\begin{align*}
 e(t) - \int_{\torus} |\nabla\Psi_q(t)|^2 \geq \frac{7\dqtwo}{16}. 
\end{align*}
If $\rho_l = 0$, then 
\begin{align*}
    e\left(\frac{l}{\muqone}\right) - \int_{\torus} |\nabla\Psi_q(t)|^2 \leq \frac{9\dqtwo}{16} \quad \textnormal{and} \quad \Mdot_q(\cdot,t)\equiv 0.
\end{align*}
\end{lemma}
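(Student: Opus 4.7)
The plan is to first bound the time derivative of the kinetic energy $\int_{\torus}|\nabla\Psi_q|^2$, and then derive each of the stated inequalities through elementary arithmetic, using the fact that $|t - l/\muqone| \leq 3/(4\muqone)$ for $t \in \supp\Xl$. Testing the inductive equation \eqref{inductiveequation} against $\nabla\Psi_q$ and integrating by parts, the transport term vanishes since $\grad\cdot\grad^\perp\Psi_q = 0$, and $\int \nabla\Psi_q \cdot \curl(Q_q)\,dx = 0$ because $\nabla\Psi_q$ is a gradient. This leaves
\begin{align*}
\frac{d}{dt}\int_{\torus}|\nabla\Psi_q|^2 \,dx = 2\int_{\torus} \nabla\Psi_q \cdot \grad\cdot \Mdot_q\,dx = -2\int_{\torus} \grad(\nabla\Psi_q):\Mdot_q\,dx,
\end{align*}
which by \eqref{inductivevelocity} and \eqref{inductiveerror} is bounded in absolute value by a universal constant times $\dq^{1/2}\laq \cdot \dqone$.

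Integrating over the interval $|t-l/\muqone|\leq 3/(4\muqone)$ and applying \cref{parameterinequalities}(1) gives
\begin{align*}
\left|\int_{\torus}|\nabla\Psi_q(t)|^2 - \int_{\torus}|\nabla\Psi_q(l/\muqone)|^2\right| \lesssim \frac{\dq^{1/2}\laq}{\muqone}\,\dqone \lesssim \laqone^{-\beta}\dqone.
\end{align*}
For $b$ close enough to $1$ and $\beta$ small relative to $c$, one has $\laqone^{-\beta}\dqone \leq \dqtwo/32$, with universal constants absorbed by a large choice of $a$. Since $e$ is smooth and compactly supported, $|e(t)-e(l/\muqone)| \leq \|e'\|_{C^0}/\muqone \leq \dqtwo/32$ for $a$ large (using that $\muqone \to \infty$ super-exponentially in $q$). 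Summing these two estimates yields the first inequality.

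The remaining claims now follow algebraically. If $\rho_l \neq 0$, the definition \eqref{definitionofrho} forces $e(l/\muqone) - \int |\nabla\Psi_q(l/\muqone)|^2 > \dqtwo/2$, so the first inequality immediately gives $e(t) - \int|\nabla\Psi_q(t)|^2 \geq \dqtwo/2 - \dqtwo/16 = 7\dqtwo/16$. Dividing the analogous two-sided bound by $\int_{\torus}\Lq^2$, which is uniformly bounded below by a positive constant since $l_{q+1}\to 0$, and absorbing the constant into the choice of $a$, produces $|\rho(t)-\rho_l| \leq \dqtwo/16$. Conversely, if $\rho_l = 0$, then $e(l/\muqone) - \int|\nabla\Psi_q(l/\muqone)|^2 \leq \dqtwo/2$, so the first inequality yields $e(l/\muqone) - \int|\nabla\Psi_q(t)|^2 \leq 9\dqtwo/16$ and, applied once more, $e(t) - \int|\nabla\Psi_q(t)|^2 \leq 10\dqtwo/16 \leq \dqone/8$ for $a$ large (since $\dqtwo/\dqone = a^{-b^{q+1}(b-1)}\to 0$). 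The inductive assumption \eqref{inductiveenergyprofiletwo} applied at time $t$ then forces $\Mdot_q(\cdot,t)\equiv 0$.

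The main obstacle is the first step: ensuring that the energy-increment bound $\dq^{1/2}\laq\dqone/\muqone$ can be absorbed into $\dqtwo/16$. This is not automatic from \cref{parameterinequalities}(1) alone, but follows by comparing $\dqone \laqone^{-\beta} = a^{-(1+\beta c)b^{q+1}}$ with $\dqtwo = a^{-bb^{q+1}}$, which requires $1 + \beta c \geq b$—precisely the regime in which the parameter inequalities have been arranged to hold. Once this quantitative comparison is established, the rest of the proof is a chain of routine applications of the triangle inequality.
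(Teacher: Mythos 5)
Your proposal is correct and takes essentially the same approach as the paper: test the inductive equation against $\nabla\Psi_q$, integrate by parts so that the transport term and the curl term drop out, bound the energy increment by $\dqone\dq^{1/2}\laq/\muqone$ over an interval of length $\sim\muqone^{-1}$, and then chase the remaining claims through the definition of $\rho$ and assumption \eqref{inductiveenergyprofiletwo}.

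One small remark on the parameter step. The paper (implicitly) checks $\dqone\dq^{1/2}\laq/\muqone\leq\dqtwo$ by a direct exponent count, which requires nothing beyond $b$ close to $1$ and $c>5/2$; the exponent identity $b^2-\tfrac{3}{4}b-\tfrac{1}{4}-\tfrac{1}{2}c(b-1)\leq 0$ holds with equality at $b=1$ and has negative derivative there since $c>5/2$. You instead route through \cref{parameterinequalities}(1) to get $\dqone\laqone^{-\beta}$ and then need $1+\beta c\geq b$, i.e.\ $\beta\geq(b-1)/c$. That is a \emph{lower} bound on $\beta$, whereas \cref{parameterinequalities} only ever imposes upper bounds (e.g.\ (5) forces $\beta\leq(b-1)/b$). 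The two are compatible because $c>b$, so $(b-1)/c<(b-1)/b$, but your phrase ``precisely the regime in which the parameter inequalities have been arranged to hold'' overstates this: the lower bound is an extra constraint you are adding, and you should verify it is compatible with the upper bounds already imposed (it is). The direct computation avoids this entirely. Everything else — the Lipschitz bound on the $\max$, the case split on $\rho_l$, and the reduction $10\dqtwo/16\leq\dqone/8$ for $a$ large to invoke \eqref{inductiveenergyprofiletwo} — is right.
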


\begin{proof}
Using that $\nabla\Psi_q$ solves \eqref{inductiveequation} and multiplying by $\nabla\Psi_q$ and integrating by parts, we obtain
\begin{align*}
    \left| \int_{\torus} \left( |\nabla\Psi_q(t)|^2 -\left|\nabla\Psi_q\left(\frac{l}{\muqone}\right)\right|^2 \right) \right| &= \left| \int_{\torus} \int_{\frac{l}{\muqone}}^t  \Mdot_q : \grad \nabla\Psi_q \right|\\
    &\leq \left(t-\frac{l}{\muqone}\right) \dqone\dq^\frac{1}{2}\laq\\
    &\lesssim \frac{4}{\muqone}\dqone\dq^\frac{1}{2}\laq\\
    &\leq \frac{\dqtwo}{32}.
\end{align*}
The bound
$$ \left| e(t) - e\left(\frac{l}{\muqone}\right) \right| \lesssim \frac{1}{\muqone} \leq \frac{\dqtwo}{32} $$
follows from the smoothness of $e(t)$. Summing both estimates, the first claim is shown.  The second claim follows from the first and the definition of $\rho(t)$. The final bound follows from the definition of $\rho(t)$, the first bound, and \eqref{inductiveenergyprofiletwo}.  
\end{proof}

\begin{prop}\label{energyincrementlemma}
If $\rho_l \neq 0$ and $t\in\supp \Xl$, then \begin{align*}
    \frac{\dqtwo}{4} \leq e(t) - \int_{\torus} |\nabla\Psi_{q+1}(t)|^2 \leq \frac{3\dqtwo}{4}.
\end{align*}
If not, however, then 
\begin{align*}
    e(t) - \int_{\torus} |\nabla\Psi_{q+1}(t)|^2 \leq \frac{9\dqtwo}{16} \quad \textnormal{and} \quad \Mdot_{q+1}(\cdot,t)\equiv 0.
\end{align*}
\end{prop}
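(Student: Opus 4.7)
The approach is to expand
\[
|\nabla\Psi_{q+1}|^2 = |\nabla\Psi_q|^2 + 2\nabla\Psi_q\cdot\pert + |\pert|^2,
\]
integrate over $\torus$, and identify the leading contribution from $\int_\torus|\pert|^2$ as $\rho(t)\int_\torus\Lq^2$. By the definition \eqref{definitionofrho} of $\rho$, the latter equals $e(t)-\int_\torus|\nabla\Psi_q|^2-\dqtwo/2$ whenever $\rho(t)>0$, producing the target energy defect of $\dqtwo/2$ at stage $q+1$. To land inside $[\dqtwo/4,3\dqtwo/4]$, every error contribution must be kept below $\dqtwo/16$.

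The cross term $\int_\torus\nabla\Psi_q\cdot\pert$ is handled by frequency separation: $\nabla\Psi_q$ lives at $x,y$-frequencies $\leq\laq$ while $\pert$ is localized near $\laqone$ in $x,y$. Integration by parts against $e^{i\laqone k\cdot\Phi_l}$ together with $\|D\Phi_l-\Id\|_{C^0}\lesssim\dq^{1/2}\laq/\muqone$ from \cref{help}(2) yields a bound $\lesssim\laq\laqone^{-1}\dq^{1/2}\dqone^{1/2}\ll\dqtwo$ via \cref{parameterinequalities}. For the self-interaction, decompose $\pert=\Lq\nabla\W+(0,0,\W\partial_z\Lq)$; the second piece contributes $\lesssim l_{q+1}\dqone/\laqone\ll\dqtwo$. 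Expanding $|\nabla\W|^2$ as a double sum over $(k,l),(k',l')$ and replacing each $\Pgradk(\wkl)$ with $\wkl$ modulo the commutator $[\Pgradk,\aklexp](\expk ik)$ controlled in \cref{olowstuff}(2), the $k+k'\neq 0$ terms are killed by the same stationary-phase analysis used in \cref{oscillationhigh}. The diagonal $k+k'=0$ forces $l,l'$ to share parity because $\Omega_1\cap\Omega_2=\emptyset$, and then $|l-l'|\geq 2$ would give $\supp\Xl\cap\supp\Xlprime=\emptyset$; only $l=l'$ survives, leaving $\sum_l\Xl^2\sum_{k\in\Omega_j}|\akl|^2$. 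Because $\D$ outputs matrices in $\mathcal{M}$ with symmetric top-left block, $\Mdot_q$ inductively satisfies the hypothesis of \cref{frequencymodes}(5) at $M=M_{q,l}/\rho_l=M_j-\Mdot_{q,l}/\rho_l$, yielding $\sum_{k\in\Omega_j}|\akl|^2=\rho_l$. Since $\sum_l\Xl^2\equiv 1$ and $|\rho_l-\rho(t)|\leq\dqtwo/16$ by \cref{preliminaryenergylemma}, we obtain $\int_\torus\Lq^2|\nabla\W|^2=\rho(t)\int_\torus\Lq^2+O(\dqtwo/16)$.

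Combining these pieces, whenever some $\rho_l\neq 0$ with $t\in\supp\Xl$ we get $e(t)-\int_\torus|\nabla\Psi_{q+1}(t)|^2=\dqtwo/2+O(\dqtwo/16)$, which lies in $[\dqtwo/4,3\dqtwo/4]$ once $a$ is large enough to absorb the universal constants. When $\rho_l=0$ for every $l$ with $t\in\supp\Xl$, every amplitude $\akl(\cdot,t)$ vanishes, so $\W$ and hence $\pert$ vanish in a neighborhood of $t$; thus $\nabla\Psi_{q+1}(t)=\nabla\Psi_q(t)$ and the bound $\leq 9\dqtwo/16$ is immediate from \cref{preliminaryenergylemma}. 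Moreover, the same lemma gives $\Mdot_q(\cdot,t)\equiv 0$, and each constituent of $\Mdot_{q+1}$ (transport, Nash, high-oscillation, low-oscillation) is built from $\W$ and $\Mdot_q$, both vanishing near $t$, so $\Mdot_{q+1}(\cdot,t)\equiv 0$.

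The main obstacle is bookkeeping: confirming that every commutator, stationary-phase residue, $\partial_z\Lq$-boundary term, and $\Mdot_{q,\ell}$-versus-$\Mdot_{q,l}$ discrepancy actually sits under $\dqtwo/16$, and inductively verifying that the symmetry hypothesis of \cref{frequencymodes}(5) propagates through the construction of $\Mdot_{q+1}$ via the symmetry of the output of $\D$.
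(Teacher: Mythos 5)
Your proposal follows the same overall strategy as the paper: expand $|\nabla\Psi_{q+1}|^2$, dispose of the cross term using the frequency gap between $\nabla\Psi_q$ and the perturbation, reduce $\int\Lq^2|\nabla\W|^2$ to $\rho(t)\int\Lq^2$ via the identity \eqref{aklidentity} of \cref{frequencymodes}(5), and collect all residual errors below $\dqtwo/16$. The $\rho_l=0$ case and the use of \cref{preliminaryenergylemma} are handled exactly as in the paper, and your reading of the quantifiers (``$\rho_l=0$ for every $l$ with $t\in\supp\Xl$'') is the correct one.

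Two places where your route differs. First, the cross term: the paper observes that $\nabla\Psi_q\cdot\nabla(\Lq\W)=\nabla\Psi_q\cdot\nabla\W$ on $\supp\nabla\Psi_q$ (since $\Lq\equiv 1$ there) and then that $\nabla\Psi_q$ and $\nabla\W=\sum\Pgradk(\Xl\wkl)$ have disjoint frequency supports by \eqref{inductivefrequencysupport} and the definition of $\Pgradk$, so $II=0$ exactly. Your integration-by-parts argument using $\|D\Phi_l-\Id\|_{C^0}\lesssim\dq^{1/2}\laq/\muqone$ yields the non-zero bound $\laq\laqone^{-1}\dq^{1/2}\dqone^{1/2}$, which does sit below $\dqtwo$ after an extra parameter check, but the exact orthogonality is shorter and is what the paper relies on. Second, you explicitly verify that the symmetry hypothesis $N^{12}=N^{21}$ in \cref{frequencymodes}(5) holds at $M=M_{q,l}/\rho_l$, by noting that $\D$ outputs matrices with a symmetric traceless top-left $2\times 2$ block and that this structure is preserved componentwise under both mollification in $z$ and transport by $\grad^\perp\Psi_q$. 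The paper applies \eqref{aklidentity} without commenting on this propagation, so your check is a genuine and worthwhile addition to the argument rather than a departure from it.
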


\begin{proof}
Beginning with the case when $\rho_l = 0$ and $t\in\supp \Xl$, we have that $\nabla\W(t) = 0$, which implies that $\Mdot_{q+1}(t) = \Mdot_q(t) = 0$ and
$$ e(t) - \int_{\torus} |\nabla\Psi_{q+1}(t)|^2 = e(t) - \int_{\torus} |\nabla\Psi_{q}(t)|^2 \leq \frac{9\dqtwo}{16}. $$

Moving to the case when $\rho_l \neq 0$ and $t \in \supp \Xl$, then by the frequency and spatial support of $\nabla\Psi_q$ and $\nabla\W$, we have that
\begin{align*}
e(t) - \int_{\torus} |\nabla\Psi_{q+1}(t)|^2 &= e(t) - \int_{\torus} |\nabla\Psi_{q}(t)|^2\\
&\qquad - \int_{\torus} |\nabla(\Lq\W)(t)|^2 - 2\int_{\torus} \nabla\Psi_q(t) \cdot \nabla (\Lq \W)(t)\\
&= e(t) - \int_{\torus} |\nabla\Psi_{q+1}(t)|^2\\
&\qquad - \int_{\torus} |\nabla(\Lq\W)(t)|^2 - 2\int_{\torus} \nabla \Psi_q(t) \cdot \nabla \W(t).
\end{align*}
We have to estimate
\begin{align*}
    \int_{\torus} \left|\nabla(\Lq\W)\right(t)|^2 + 2\int_{\torus} \nabla\Psi_q(t)\cdot \nabla \W (t) =: I + II.
\end{align*}
Using \eqref{inductivefrequencysupport} and the definition of $\Pgradk$ to see that $\nabla\Psi_q$ and $\nabla\W$ are supported in disjoint sets in frequency, we see that $II=0$. Writing out $I$ gives
\begin{align*}
    \int_{\torus} \left| \nabla (\Lq \W)(t) \right|^2 &= \int_{\torus} \Lq^2 \nabla \W(t) \cdot \nabla \W(t)\\
    &\qquad + 2 \int_{\torus} \Lq\partial_z \Lq \W \partial_z \W + \int_{\torus} \left( \partial_z \Lq \right)^2 \left( \W \right)^2\\
    &= I_1 + I_2 + I_3.
\end{align*}
We can control $I_2$ by 
$$ \left| 2 \int_{\torus}  \Lq\partial_z \Lq \W \partial_z \W \right| \leq l_{q+1}\frac{\dqone}{\laqone} $$
and 
$I_3$ by
$$ \left| \int_{\torus} \left( \partial_z \Lq \right)^2 \left( \W \right)^2 \right| \leq l_{q+1}^2 \frac{\dqone}{\laqone^2}. $$
Writing out $I_1$ gives
\begin{align*}    
I_1 &=  \sum_{kl} \int_{\torus} \Lq^2 \Pgradk(\Xl\wkl)\cdot\Pgradminusk(\Xl w_{-kl})\\
&=  \sum_{kl} \int_{\torus} \Lq^2 \Xl^2 \bigg{[} \wkl \cdot w_{-kl} + \left[ \Pgradk, \aklexp \right](\expk ik) \cdot w_{-kl}\\
&\qquad\qquad + w_{kl}\cdot \left[ \Pgradminusk, \aminusklexp\right](e^{-i\laqone k}ik)\\
&\qquad\qquad + \left[ \Pgradk, \aklexp \right](\expk ik) \cdot \left[ \Pgradminusk, \aminusklexp\right](e^{-i\laqone k}ik) \bigg{]}\\
&= \sum_{kl} \int_{\torus} \Lq^2 \Xl^2 |a_{kl}|^2 + O\left( \dqone\laqone^{-\beta} \right)\\
&= \sum_l \Xl^2 \rho_l \int_{\torus} \Lq^2 + O\left( \dqone\laqone^{-\beta} \right).
\end{align*}
after applying the commutator estimate \eqref{commutatorbsv1} and \eqref{aklidentity}. Then applying the definition of $\rho_l$ given in \eqref{definitionofrho} finishes the proof.
\end{proof}

\section{Proof of Main Results}\label{Proofofmainresults}

\begin{proof}[Proof of \cref{inductiveproposition}]
We show that each inductive step holds with $q$ replaced by $q+1$. Referring to the statements of \cref{transporterror}, \cref{nasherror}, \cref{oscillationhigh}, and \cref{oscillationlow}, we have that $\nabla\Psi_{q+1}$ solves
$$ \partial_t \nabla\Psi_{q+1} + \grad^\perp \Psi_{q+1} \cdot \grad \nabla\Psi_{q+1} =\curl(Q_{q+1}) + \grad \cdot \Mdot_{q+1} $$
where 
$$Q_{q+1}=Q_T+Q_N+Q_{high}+Q_{low}, \qquad \Mdot_{q+1}=\Mdot_T+\Mdot_N+\Mdot_{high}+\Mdot_{low}$$
and thus \eqref{inductiveequation} is satisfied. The inductive step \eqref{inductivefrequencysupport} follows from the frequency support of $\W\Lq$. \eqref{inductivespatialsupport}-\eqref{inductivecurl} follow directly from the statements of \cref{help}, \cref{transporterror}, \cref{nasherror}, \cref{oscillationhigh}, and \cref{oscillationlow}. Finally, \eqref{inductiveenergyprofileone} and \eqref{inductiveenergyprofiletwo} follow from \cref{energyincrementlemma}.
\end{proof}

\begin{proof}[Proof of \cref{eulerinductiveproposition}]
Towards the purpose of constructing solutions to 2D Euler, one first eliminates the inductive assumption \eqref{inductivespatialsupport} on the spatial support and defines $\Lq\equiv 1$ for all $q$. Next, choose the first set of frequency modes to have zero third component. Then it is easy to see that $\Mdot_1$ is of the specified block form. Continuing to apply \cref{frequencymodes} by choosing modes with zero third component since the third row of $\Mdot_q$ is empty gives immediately that $\partial_z(\Psi_{q+1}-\Psi_q)\equiv 0$, and therefore $\Psi$ depends only on $x$, $y$, and $t$.
\end{proof}

\begin{proof}[Proof of \cref{maintheorem}]
From the estimate $\| \wkl  \|_{C^1} + \| \Lq \|_{C^1} \leq \dqone^\frac{1}{2}\laqone$, we have that
\begin{align*}
    \left\| \nabla \left( \nabla\Psi_{q+1} - \nabla\Psi_q \right) \right\|_{C^0} &= \left\| \nabla^2 \left(\Lq\W)\right) \right\|_{C^0}\\
    &\leq \dqone^\frac{1}{2} \laqone.
\end{align*}
We claim that the time derivative $\partial_t \nabla \left( \Lq\W\right)$ satisfies the same bound.  Indeed,
\begin{align*}
    \left\| \partial_t \nabla \left(\Lq\W\right) \right\|_{C^0} &= \left\| \Dtq \left(\Lq \nabla\W\right) \right\|_{C^0} + \left\| \grad^\perp\Psi_q \cdot \grad \nabla\W \right\|_{C^0}\\
    &\leq \muqone \dqone^\frac{1}{2} + \dqone^\frac{1}{2} \laqone\\
    &\lesssim \dqone^\frac{1}{2} \laqone.
\end{align*}
Interpolation then shows that 
\begin{align*}
    \left\| \nabla \left(\Lq\W\right) \right\|_{C^\zeta_{x,t}} &\leq \left\| \nabla \left(\Lq\W\right) \right\|^{1-\zeta}_{C^0_{x,t}} \left\| \nabla \left(\Lq\W\right) \right\|^{\zeta}_{C^1_{x,t}}\\
    &\lesssim \left( \dqone^\frac{1}{2} \right)^{1-\zeta} \left( \dqone^\frac{1}{2} \right)^\zeta \laqone^\zeta\\
    &= \dqone^\frac{1}{2} \laqone^\zeta\\
    &= a^{\left(-\frac{1}{2}+c\zeta\right)b^{q+1}}.
    \end{align*}
By the assumption that $c>\frac{5}{2}$, we have that $-\frac{1}{2}+c\zeta$ is negative provided that $\zeta < \frac{1}{2c}<\frac{1}{5}$.  Then $\nabla\Psi_q$ is a convergence sequence in $C^\zeta_{t,x}$.  The bounds on the pressure follow immediately from \eqref{inductivecurl} and interpolation.
\end{proof}

\begin{proof}[Proof of \cref{maintheorem2d}]
Given that the extra assumption of \cref{eulerinductiveproposition} is satisfied at each stage $q$, every subsequent perturbation $\nabla\W$ can be taken to have zero third component, producing a solution to 2D Euler as desired after repeating the steps of the previous proof.
\end{proof}

\section{Appendix}\label{Appendix}
Here we collect several types of estimates which shall be necessary throughout the construction. All have become essentially standard in recent convex integration schemes. We begin with the following estimates for solutions to transport equations.  For a proof, we refer the reader to \cite{bdlisj15}.
\begin{lemma}[Transport Estimates]\label{transport}
Consider the transport equation
$$ \partial_t f + u \cdot \nabla f = g, \qquad \qquad f|_{t_0} = f_0 $$
where $f,g:\mathbb{T}^n\rightarrow \mathbb{R}$ and $u:\mathbb{T}^n\rightarrow \mathbb{R}^n$ are smooth functions.  Let $\Phi$ be the inverse of the flow $X$ of $u$ defined by 
$$ \frac{d}{dt}X = u(X,t) , \qquad X(x,t_0)=x.$$
Then the following hold:    
\begin{enumerate}
    \item $\| f(t) \|_{C^0} \leq \|f_0\|_{C^0} + \int_{t_0}^t \|g(s)\|_{C^0} \,ds $
    \item $\| Df(t) \|_{C^0} \leq \| Df_0 \|_{C^0} e^{(t-t_0)\|Du\|_{C^0}} + \int_{t_0}^t e^{(t-s)\|Du\|_{C^0}} \|Dg(s)\|_{C^0} \,ds $
    \item For any $N\geq 2$, there exists a constant $C=C(N)$ such that
    \begin{align*}
        \qquad\quad \|D^N f(t)\|_{C^0} \leq  &\left( \|D^N f_0\|_{C^0} + C(t-t_0)\|D^n u\|_{C^0} \| Df \|_{C^0} \right) e^{C(t-t_0)\|Du\|_{C^0}}\\
        & \quad + \int_{t_0}^t e^{C(t-s)\|Du\|_{C^0}} \left( \|D^N g(s)\|_{C^0} + C(t-s) \|D^N u \|_{C^0} \|Dg(s)\|_{C^0} \right) \,ds
    \end{align*}
    \item $ \|D\Phi(t)-\Id \|_{C^0} \leq e^{(t-t_0)\|Du\|_{C^0}} - 1 \leq (t-t_0)\|Du\|_{C^0} e^{(t-t_0)\|Du\|_{C^0}} $
    \item For $N\geq 2$ and a constant $C=C(N)$, $$ \| D^N \Phi(t)\|_{C^0} \leq C(t-t_0)\| D^N u \|_{C^0} e^{C(t-t_0)\|Du\|_{C^0}} $$
\end{enumerate}
\end{lemma}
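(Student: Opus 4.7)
The plan is to use the method of characteristics. Let $X(x,t)$ denote the flow of $u$, so that $\frac{d}{dt}X(x,t) = u(X(x,t),t)$ with $X(x,t_0)=x$, and let $\Phi(\cdot,t)$ denote its inverse. Along characteristics, $\frac{d}{dt}\bigl(f(X(x,t),t)\bigr) = g(X(x,t),t)$, so integrating from $t_0$ to $t$ and composing with $\Phi$ yields
\begin{equation*}
f(y,t) = f_0(\Phi(y,t)) + \int_{t_0}^t g(X(\Phi(y,t),s),s)\,ds.
\end{equation*}
Taking the supremum over $y$ and using that $X(\cdot,s)$ is a diffeomorphism of $\mathbb{T}^n$ onto itself gives (1) immediately, since $\|f_0\circ\Phi(\cdot,t)\|_{C^0}=\|f_0\|_{C^0}$ and $\|g\circ X(\cdot,s)\|_{C^0}=\|g(s)\|_{C^0}$.

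For (2), I would differentiate the transport equation: setting $h := Df$, one checks that
\begin{equation*}
\partial_t h + u\cdot \nabla h = Dg - (Du)^t h, \qquad h|_{t_0}=Df_0.
\end{equation*}
Applying (1) along characteristics and using $\|(Du)^t h\|_{C^0}\leq \|Du\|_{C^0}\|h\|_{C^0}$ yields the integral inequality
\begin{equation*}
\|h(t)\|_{C^0} \leq \|Df_0\|_{C^0} + \int_{t_0}^t \|Dg(s)\|_{C^0}\,ds + \int_{t_0}^t \|Du\|_{C^0}\|h(s)\|_{C^0}\,ds,
\end{equation*}
from which Gronwall's inequality produces (2). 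For (3), I would iterate this procedure: applying $D^N$ to the transport equation and using the Leibniz/Faà di Bruno rule expresses $\partial_t (D^N f) + u\cdot\nabla (D^N f)$ as $D^N g$ plus commutator terms of the form $(D^j u)(D^{N+1-j} f)$ for $1\leq j \leq N$. Using interpolation between $\|Df\|_{C^0}$ and $\|D^N f\|_{C^0}$ to convert the intermediate derivatives into a convex combination, and then applying Gronwall with the coefficient $C\|Du\|_{C^0}$, gives the claimed bound.

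Finally, (4) and (5) follow by applying the previous estimates to $\Phi$ itself, since $\Phi$ satisfies the transport equation $\partial_t \Phi + u\cdot\nabla\Phi = 0$ with initial data $\Phi(x,t_0)=x$. Indeed, $D\Phi - \Id$ solves a transport equation with source $-(Du)^t D\Phi$ and zero initial data, so Gronwall gives $\|D\Phi-\Id\|_{C^0}\leq e^{(t-t_0)\|Du\|_{C^0}}-1$, proving (4). For (5) one differentiates once more and runs the $N$-th order estimate from (3) with $g\equiv 0$ and vanishing initial data for derivatives of order $\geq 2$. The main technical obstacle is the combinatorial bookkeeping in (3): one must track exactly which intermediate derivative norms of $u$ and $f$ appear in the Faà di Bruno expansion so that, after interpolation, only the factors $\|Du\|_{C^0}$, $\|D^N u\|_{C^0}$, $\|Df\|_{C^0}$, and the source-term norms remain, matching the stated form. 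Everything else is a direct application of the characteristic method and Gronwall's inequality.
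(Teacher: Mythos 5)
Your argument is correct and is the standard one: characteristics for (1), differentiate the PDE and apply Gronwall for (2)--(3), and apply the same machinery to $\Phi$ (which solves a homogeneous transport equation with identity initial data) for (4)--(5). The paper does not prove this lemma itself; it cites Buckmaster, De Lellis, Isett, and Sz\'{e}kelyhidi, and your route is precisely the one found in that reference. One point worth making explicit: in (3), the interpolation must run in both $u$ and $f$ simultaneously. Each commutator term $\|D^j u\|_{C^0}\|D^{N+1-j}f\|_{C^0}$, for $1\leq j\leq N$, is bounded by interpolating $\|D^j u\|_{C^0}$ between $\|Du\|_{C^0}$ and $\|D^N u\|_{C^0}$ \emph{and} $\|D^{N+1-j}f\|_{C^0}$ between $\|Df\|_{C^0}$ and $\|D^N f\|_{C^0}$; multiplying the two interpolation inequalities and applying Young's inequality to the resulting product of powers yields $\|Du\|_{C^0}\|D^N f\|_{C^0} + \|D^N u\|_{C^0}\|Df\|_{C^0}$. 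Interpolating the $f$ norms alone, as you phrase it, would leave the intermediate $u$ derivatives uncontrolled. Also, Fa\`{a} di Bruno is not actually needed anywhere: all differentiations here act on products, so the Leibniz rule suffices, and your choice to work with the PDE $\partial_t\Phi + u\cdot\nabla\Phi = 0$ rather than differentiating $X^{-1}$ is exactly what avoids any need to differentiate compositions.
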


The following estimate controls the norms of compositions of functions, particularly the perturbation.
\begin{lemma}[Chain Rule]\label{chainrule}
Let $\Omega\subset \mathbb{R}^D$ $f:\Omega \rightarrow \mathbb{R}$, $g:\mathbb{R}^d \rightarrow \Omega$ be smooth functions.  Then for every integer $N\geq 1$, there is a constant $C=C(N,d,D)$ such that 
$$ \|D^N(f\circ g)\|_{C^0} \leq C\left( \|Df\|_{C^0}\|D^N g\|_{C^0} + \|Df\|_{C^{N-1}} \|g\|_{C^0}^{N-1} \|D^N g\|_{C^0} \right) $$
and 
$$ \|D^N(f\circ g)\|_{C^0} \leq C\left( \|Df\|_{C^0}\|D^N g\|_{C^0} + \|Df\|_{C^{N-1}}  \|D g\|_{C^0}^N \right). $$
\end{lemma}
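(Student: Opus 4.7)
The plan is to combine Fa\`a di Bruno's formula with the Gagliardo--Nirenberg interpolation inequality and a weighted AM--GM step. First I would expand
\[
D^N(f\circ g)(x)=\sum_{k=1}^{N}(D^k f)\bigl(g(x)\bigr)\,P_{N,k}\!\bigl(Dg(x),D^2 g(x),\ldots,D^{N-k+1}g(x)\bigr),
\]
where $P_{N,k}$ is the universal polynomial whose terms are of the form $c_{i_1,\ldots,i_k}\,D^{i_1}g\otimes\cdots\otimes D^{i_k}g$ summed over tuples with $i_j\geq 1$ and $\sum_{j}i_j=N$. Taking $C^0$ norms reduces the task to bounding, for each such tuple, the product $\|D^k f\|_{C^0}\prod_{j=1}^{k}\|D^{i_j}g\|_{C^0}$; the $k=1$ contribution already gives exactly the term $\|Df\|_{C^0}\|D^N g\|_{C^0}$ appearing on the right-hand side of both target inequalities.

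For $k\geq 2$, I would apply Gagliardo--Nirenberg to each intermediate derivative of $g$. For the first estimate I would interpolate between $\|g\|_{C^0}$ and $\|D^N g\|_{C^0}$, obtaining $\|D^{i_j}g\|_{C^0}\lesssim \|g\|_{C^0}^{1-i_j/N}\|D^N g\|_{C^0}^{i_j/N}$, so that the product collapses via $\sum i_j=N$ to $\|g\|_{C^0}^{k-1}\|D^N g\|_{C^0}$. For the second estimate, the constraint $k\geq 2$ forces each $i_j\leq N-1$, so I would interpolate between $\|Dg\|_{C^0}$ and $\|D^N g\|_{C^0}$, yielding $\|D^{i_j}g\|_{C^0}\lesssim \|Dg\|_{C^0}^{(N-i_j)/(N-1)}\|D^N g\|_{C^0}^{(i_j-1)/(N-1)}$ with product $\|Dg\|_{C^0}^{N(k-1)/(N-1)}\|D^N g\|_{C^0}^{(N-k)/(N-1)}$. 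In parallel, I would interpolate $\|D^k f\|_{C^0}$ between $\|Df\|_{C^0}$ and $\|Df\|_{C^{N-1}}$ via $\|D^k f\|_{C^0}\lesssim \|Df\|_{C^0}^{(N-k)/(N-1)}\|Df\|_{C^{N-1}}^{(k-1)/(N-1)}$.

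Combined, these estimates factorise the $k$-th summand as $A^{(N-k)/(N-1)}B^{(k-1)/(N-1)}$, where $A=\|Df\|_{C^0}\|D^N g\|_{C^0}$ and $B$ is either $\|Df\|_{C^{N-1}}\|g\|_{C^0}^{N-1}\|D^N g\|_{C^0}$ (first estimate) or $\|Df\|_{C^{N-1}}\|Dg\|_{C^0}^{N}$ (second estimate). Weighted AM--GM then gives $A^{\alpha}B^{1-\alpha}\leq \alpha A+(1-\alpha)B\leq A+B$ with $\alpha=(N-k)/(N-1)\in[0,1]$, and summing the finitely many tuples for each $k$ along with the remaining values $k=2,\ldots,N$ absorbs all combinatorial constants into $C(N,d,D)$ and yields both claimed bounds.

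The only real obstacle is algebraic bookkeeping: one must verify that the exponents produced by Gagliardo--Nirenberg on the $g$-factors and on $\|D^k f\|_{C^0}$ combine to reproduce \emph{exactly} the factorisation $A^{(N-k)/(N-1)}B^{(k-1)/(N-1)}$ with the two prescribed quantities $A$ and $B$. For the second inequality this rests on the identities $\sum_{j}(N-i_j)/(N-1)=N(k-1)/(N-1)$ and $\sum_{j}(i_j-1)/(N-1)=(N-k)/(N-1)$, matching the powers of $\|Dg\|_{C^0}^{N}$ in $B$ and of $\|D^N g\|_{C^0}$ in $A$ once each is raised to the appropriate exponent. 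Once these identities are checked no genuine analytic difficulty remains; the argument is classical and the constant $C(N,d,D)$ depends only on the dimensions and on the combinatorial coefficients from Fa\`a di Bruno.
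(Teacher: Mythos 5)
The paper states this lemma without proof, treating it as one of the standard tools of the convex integration literature (the appendix preamble simply notes these estimates ``have become essentially standard''). Your argument is the standard proof, and the algebra all checks out: the exponent bookkeeping for the second estimate, which you flag as the main thing to verify, is indeed correct. From
\[
\textstyle\sum_{j=1}^{k}(N-i_j)=kN-N=N(k-1),\qquad \sum_{j=1}^{k}(i_j-1)=N-k,
\]
the product of the $g$-interpolations gives $\|\overline{\nabla}g\|_{C^0}^{N(k-1)/(N-1)}\|D^N g\|_{C^0}^{(N-k)/(N-1)}$, and pairing this with the $f$-interpolation exponent $\tfrac{k-1}{N-1}$ on $\|Df\|_{C^{N-1}}$ and $\tfrac{N-k}{N-1}$ on $\|Df\|_{C^0}$ factors the $k$-th contribution as $A^{(N-k)/(N-1)}B^{(k-1)/(N-1)}$ exactly as you claim; the same check for the first estimate uses $\sum i_j=N$ so that the $\|D^N g\|_{C^0}$ exponents sum to $1$. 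Weighted AM--GM then closes the argument.

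The one point worth stating explicitly is the domain of validity of the interpolation applied to the $f$-derivatives. The Landau--Kolmogorov/Gagliardo--Nirenberg inequality $\|D^k f\|_{C^0}\lesssim \|Df\|_{C^0}^{(N-k)/(N-1)}\|Df\|_{C^{N-1}}^{(k-1)/(N-1)}$ is used with sup-norms taken over $\Omega$, and this requires $\Omega$ to support such an interpolation (e.g.\ $\Omega=\mathbb{R}^D$, a half-space, or a convex domain with the constant allowed to depend on its geometry); on a short interval the inequality can fail with a constant independent of $\Omega$. The interpolation of the $g$-derivatives is unproblematic since the domain there is $\mathbb{R}^d$. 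In the paper's applications $f$ is one of finitely many fixed smooth coefficient functions $c_{j,k}$ defined on a fixed ball (and one may smoothly extend them to $\mathbb{R}^D$), so this subtlety never bites, but if you want the constant to be $C(N,d,D)$ uniformly in $\Omega$ as stated, you should either take $\Omega=\mathbb{R}^D$ or record the dependence on $\Omega$. Apart from this remark, your proof is complete and matches the standard treatment.
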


We shall make use of the following commutator estimates. The estimate in \cref{commutatorconstantin} is essentially contained in \cite{con15}, although the version stated here is a slight alteration whose statement and proof can be found in \cite{bdlsv18}.  The commutator estimate \eqref{commutatorbsv1} for convolution operators localized in frequency can be found in \cite{iv15} or \cite{bsv16}.  The estimates \eqref{iteratedcommutator} and \eqref{commutatorbsv2} follow the methods of proof given in \cite{iv15} and \cite{bsv16}.

\begin{prop}\label{commutatorconstantin}
Let $\alpha\in(0,1)$ and $N\geq 0$.  Let $T_K$ be a $\mathbb{R}^n$-Calder\'{o}n-Zygmund operator with kernel $K$.  Let $b \in C^{N+1,\alpha}(\mathbb{T}^n)$ be a vector field and $f\in C^{N,\alpha}(\mathbb{T}^n)$. Then there exists a constant $C=C(\alpha,N,K)$ such that 
\begin{align*}
 \| \left[ T_K,b\cdot\nabla \right]f \|_{N+\alpha} \leq C \|b\|_{1+\alpha}\|f\|_{N+\alpha} + \|b\|_{N+1+\alpha}\|f\|_{\alpha}. 
 \end{align*}
\end{prop}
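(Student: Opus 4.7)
The plan is to begin from the integration-by-parts identity
\begin{equation*}
[T_K, b\cdot\nabla]f(x) = \mathrm{p.v.}\int \nabla_x K(x-y) \cdot (b(y)-b(x))\, f(y)\,dy \;-\; T_K\bigl((\nabla\cdot b)\, f\bigr)(x),
\end{equation*}
which is obtained by integrating by parts inside $T_K(b\cdot\nabla f)$ (using that $\nabla_y K(x-y) = -\nabla_x K(x-y)$) and then pairing the result against $b(x)\cdot\nabla T_K f(x)$. This splits the problem into two very different objects: a non-convolution singular integral with the composite kernel $\nabla K(x-y)(b(y)-b(x))$ acting on $f$, plus a standard Calderón–Zygmund operator $T_K$ applied to the pointwise product $(\nabla\cdot b)f$.

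The second piece is the easy one. Since $T_K$ is bounded on $C^{N,\alpha}$ for non-integer exponent $N+\alpha$, and since the Hölder product rule gives
\begin{equation*}
\|(\nabla\cdot b)\, f\|_{N+\alpha} \lesssim \|\nabla b\|_\alpha \|f\|_{N+\alpha} + \|\nabla b\|_{N+\alpha}\|f\|_\alpha \lesssim \|b\|_{1+\alpha}\|f\|_{N+\alpha} + \|b\|_{N+1+\alpha}\|f\|_\alpha,
\end{equation*}
this piece immediately matches the target estimate. For the first piece, I would first treat the base case $N=0$: define
$G(x) := \int \nabla K(x-y)\cdot(b(y)-b(x))f(y)\,dy$
and use the Taylor-type representation $b(y)-b(x) = \int_0^1 \nabla b(x+s(y-x))\cdot(y-x)\,ds$ to rewrite the effective kernel as an average over $s\in[0,1]$ of kernels of the form $\bigl(\nabla K(x-y)\otimes(y-x)\bigr)\,\nabla b(x+s(y-x))$, each of the critical singular order $-n$. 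The $C^\alpha$ bound then follows from the classical near-field/far-field split relative to $h := |x-x'|$: for $|y-x|\lesssim h$ one uses the Hölder regularity of $b$ to gain an extra $h^\alpha$, while for $|y-x|\gtrsim h$ one exploits the pointwise derivative bound $|\nabla K(x-y)-\nabla K(x'-y)|\lesssim h/|x-y|^{n+2}$, combined with the cancellation already built into the $b(y)-b(x)$ factor.

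For general $N\geq 1$, I would induct using the identity
\begin{equation*}
\partial_i [T_K, b\cdot\nabla] f \;=\; [T_K, (\partial_i b)\cdot\nabla] f \;+\; [T_K, b\cdot\nabla](\partial_i f),
\end{equation*}
which after $N$ iterations expresses $\partial^N[T_K,b\cdot\nabla]f$ as a sum over $0\le j\le N$ of commutators of the form $[T_K, (\partial^j b)\cdot\nabla](\partial^{N-j}f)$. Each such term is controlled by the $N=0$ case applied with $\partial^j b$ in place of $b$ and $\partial^{N-j}f$ in place of $f$, yielding a bilinear bound $\|\partial^j b\|_{1+\alpha}\|\partial^{N-j}f\|_\alpha$; these are dominated by the claimed right-hand side via interpolation between the extreme cases $j=0$ (giving $\|b\|_{1+\alpha}\|f\|_{N+\alpha}$) and $j=N$ (giving $\|b\|_{N+1+\alpha}\|f\|_\alpha$).

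The main obstacle is the base-case Hölder estimate for the singular integral with kernel $\nabla K(x-y)(b(y)-b(x))$: although its $L^p$ boundedness is classical Calderón-commutator theory, the $C^\alpha$ estimate is not a plug-and-play consequence of standard CZ theory because the kernel is not of convolution type. One must exploit both the cancellation of $\nabla K$ and the vanishing of the factor $b(y)-b(x)$ at the diagonal $y=x$ simultaneously, which requires the direct near-field/far-field analysis above. The inductive Leibniz expansion and the factoring out of $T_K((\nabla\cdot b)f)$ are routine; keeping the constants sharp across all orders $j\le N$ is the only additional bookkeeping.
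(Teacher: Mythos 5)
The paper does not prove this proposition at all: it cites \cite{con15} for the original estimate and \cite{bdlsv18} for the precise version and proof. So there is no ``paper approach'' to compare against, only the external references. Your overall strategy — integrate by parts to trade $\nabla f$ for $\nabla K$, peel off $T_K((\nabla\cdot b)f)$, prove the $N=0$ Schauder estimate for the remaining singular integral directly, then reduce the $N\geq 1$ case via the Leibniz identity $\partial_i[T_K,b\cdot\nabla]f = [T_K,(\partial_ib)\cdot\nabla]f + [T_K,b\cdot\nabla]\partial_if$ plus interpolation — is the standard route and is consistent with the cited references. The Leibniz/interpolation step is correct: each term $\|\partial^j b\|_{1+\alpha}\|\partial^{N-j}f\|_\alpha\lesssim\|b\|_{j+1+\alpha}\|f\|_{N-j+\alpha}$ is dominated by the right-hand side via $\|b\|_{j+1+\alpha}\lesssim\|b\|_{1+\alpha}^{1-j/N}\|b\|_{N+1+\alpha}^{j/N}$, $\|f\|_{N-j+\alpha}\lesssim\|f\|_{N+\alpha}^{1-j/N}\|f\|_{\alpha}^{j/N}$, and Young's inequality.

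However, your sketch of the base case $N=0$ is incomplete in a way that matters. After the Taylor step the effective kernel $\nabla K(x-y)\otimes(y-x)\,\nabla b(\cdot)$ has the \emph{critical} order $-n$, so you cannot close either the near-field or the far-field by ``gaining $h^\alpha$ from the Hölder regularity of $b$'' alone. In the near field, $\int_{|x-y|\le 2h}|x-y|^{-n}\,dy$ diverges logarithmically; the estimate is only finite because (i) you subtract $f(x)$ from $f(y)$ and pay $\|f\|_\alpha$, and (ii) the matrix kernel $\nabla K(z)\otimes z$ has \emph{zero spherical mean} (a consequence of the CZ cancellation of $K$: by the divergence theorem $\int_{1\le|z|\le 2}\partial_i K(z)\,z_j\,dz = -\delta_{ij}\int_{1\le|z|\le 2}K = 0$), so the principal value of the $f(x)\nabla b(x)$-term vanishes. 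In the far field, the bound $|\nabla K(x-y)-\nabla K(x'-y)||b(y)-b(x)|\lesssim h|x-y|^{-n-1}$ integrates to $O(1)$, not $O(h^\alpha)$; again one must subtract $f(x)$ to use $\|f\|_\alpha$ and invoke the spherical mean-zero of $\nabla K\otimes z$ to kill the leftover constant term $f(x)\nabla b(x):\int_{\mathrm{far}}\nabla K(z)\otimes z\,dz$. Equivalently, it is cleaner to first factor $G(x)=-\partial_jb_i(x)\,T_{ij}f(x)+\tilde G(x)$ where $T_{ij}$ has kernel $\partial_iK(z)z_j$ (a bona fide degree $-n$ CZ kernel with vanishing spherical mean), treat the first term via $\|\partial_jb_i\,T_{ij}f\|_\alpha\lesssim\|b\|_{1+\alpha}\|f\|_\alpha$ by product rule and CZ theory, and only then do the near/far split on the remainder $\tilde G$ whose kernel is genuinely $O(|x-y|^{-n+\alpha})$. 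As written, your sketch neither extracts the CZ sub-operator $T_{ij}$ nor invokes the Hölder modulus of $f$, so the far-field step as stated would fail to produce the $h^\alpha$ gain. The rest of the proposal is sound.
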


\begin{prop}
Let $s\in\mathbb{R}$, $\lambda\geq1$, and let $T_K$ be an order $s$ convolution operator localized at length scale $\lambda^{-1}$ whose action on smooth functions is given by convolution with a kernel $K$ satisfying the bounds
$$ \| |x|^a \nabla^b K(x) \|_{L^1(\mathbb{R}^n)} \leq C(a,b) \lambda^{b-a+s} $$
for all $0\leq a, |b|$.  Then the following hold.
\begin{enumerate}
\item For $f:\mathbb{T}^n \rightarrow \mathbb{C}$ a smooth function and $u: \mathbb{T}^n \rightarrow \mathbb{R}^n$ a smooth vector field with $\nabla\cdot u = 0$, we have
\begin{align}\label{commutatorbsv1}
 \| \left[ u\cdot\nabla, T_K \right]f \|_{C^0} \leq \lambda^s \|\nabla u\|_{C^0} \|f\|_{C^0} 
 \end{align}
\item For $f:\mathbb{T}^n \rightarrow \mathbb{C}$ a smooth function and $u: \mathbb{T}^n \rightarrow \mathbb{R}^n$ a smooth vector field with $\nabla\cdot u = 0$, the iterated commutator $\left[ \partial_t + u \cdot \grad , \left[ u \cdot \grad, T_K \right] \right](f)$ obeys the estimate
\begin{align}\label{iteratedcommutator}
\left\| \left[ \partial_t + u \cdot \grad , \left[ u \cdot \grad, T_K \right] \right](f) \right\| \lesssim \lambda^{s-1} \| u \|_{C^1}^2 \| f \|_{C^1} + \|f\|_{C^0} \left( \lambda^{s+1} \| \partial_t u + u \cdot \grad u \|_{C^0} + \lambda^s \|u\|_{C^1}^2  \right).
\end{align}
\item For $f,g:\mathbb{T}^n \rightarrow \mathbb{C}$ smooth functions, we have (for an implicit constant depending on $k$ as well)
\begin{align}\label{commutatorbsv2}
\| \left[ g,T_K \right]f \|_{C^k} \lesssim \lambda^{s-1} \sum_{0\leq j \leq k} \|\nabla g\|_{C^j} \|f\|_{C^{k-j}}.
\end{align}
\end{enumerate}
\end{prop}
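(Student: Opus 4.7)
The plan is to derive all three commutator bounds from a common starting point: a pointwise integral representation of the commutator obtained by writing $T_K$ as a convolution and moving derivatives off $K$ using integration by parts. The kernel hypothesis $\||x|^a \nabla^b K\|_{L^1} \lesssim \lambda^{b-a+s}$ then converts decay of $K$ into derivatives of the variable-coefficient factors $u$, $g$, and $f$.

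For \eqref{commutatorbsv1}, I would first establish the pointwise identity
\[
[u\cdot\nabla, T_K]f(x) = -\int \nabla K(x-y)\cdot (u(x)-u(y)) f(y)\, dy,
\]
which follows by expanding $u(x)\cdot \nabla T_Kf(x) - T_K(u\cdot\nabla f)(x)$, transferring the $\nabla_y$ acting on $K$ via integration by parts, and using $\nabla\cdot u = 0$ to kill the term where the divergence would fall on $u(y)$. Bounding $|u(x)-u(y)| \leq \|\nabla u\|_{C^0}|x-y|$ pointwise and applying the kernel hypothesis with $a=b=1$ (giving $\||z|\nabla K(z)\|_{L^1}\lesssim \lambda^{s}$) immediately yields \eqref{commutatorbsv1}.

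For \eqref{commutatorbsv2}, I would use the analogous identity
\[
[g,T_K]f(x) = \int K(x-y)(g(x)-g(y))f(y)\, dy,
\]
and insert the Taylor formula $g(x)-g(y) = \int_0^1 \nabla g(y+t(x-y))\cdot(x-y)\, dt$. After the substitution $z = x-y$ this rewrites $[g,T_K]f(x)$ as a $t$-averaged convolution-type integral where the factor $z$ can be absorbed into $K$ via the kernel bound $\||z|K(z)\|_{L^1}\lesssim \lambda^{s-1}$. To obtain the $C^k$ estimate, I would differentiate $k$ times in $x$ inside the integral and distribute the derivatives by Leibniz between $\nabla g(x-(1-t)z)$ and $f(x-z)$, producing the sum $\sum_j \|\nabla g\|_{C^j}\|f\|_{C^{k-j}}$; the overall $\lambda^{s-1}$ factor comes entirely from the kernel bound.

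For \eqref{iteratedcommutator}, the idea is to apply $D_t := \partial_t + u\cdot\nabla$ to the integral representation from \eqref{commutatorbsv1} and push it inside. When $D_t$ hits $u(x)-u(y)$, Taylor expansion to second order produces two kinds of contributions: a term containing the transport combination $(\partial_t u + u\cdot\nabla u)(x) - (\partial_t u + u\cdot\nabla u)(y)$, and a second-order error whose size is $|x-y|^2 \|\nabla u\|_{C^0}^2$; when $D_t$ hits $f(y)$ the residual is handled by another \eqref{commutatorbsv1}-type estimate on $D_t f$. Using the kernel hypotheses with $a=2, b=1$ for the quadratic contributions (producing $\||z|^2 \nabla K(z)\|_{L^1}\lesssim \lambda^{s-1}$) and with $a=0, b=0$ for the $\partial_t u + u\cdot\nabla u$ difference (producing $\|K(z)\|_{L^1}\lesssim \lambda^{s+1}$ after one integration by parts off $K$) yields exactly the three terms on the right-hand side of \eqref{iteratedcommutator}.

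The hard part will be \eqref{iteratedcommutator}: if one naively distributes $D_t$ across the integrand, the separate $\partial_t u$ and $u\cdot\nabla u$ pieces each give a worse bound than claimed, and one must exploit the algebraic cancellation between them so that only the transport combination $\partial_t u + u\cdot\nabla u$ survives. Equally delicate is extracting the extra factor of $\lambda^{-1}$ over \eqref{commutatorbsv1} via the second-order Taylor remainder, which requires tracking cancellations between the $(u(x)-u(y))\otimes (u(x)-u(y))$ term and the pieces produced when $u\cdot\nabla$ acts across the two slots of the kernel convolution. The remaining bookkeeping reduces to applications of the kernel hypothesis parallel to the proofs of \eqref{commutatorbsv1} and \eqref{commutatorbsv2}.
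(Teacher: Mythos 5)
Your overall plan matches the paper's proof: the same integral representation for the commutator (obtained by integration by parts and $\nabla\cdot u=0$), the same Taylor-then-Leibniz approach for \eqref{commutatorbsv2}, and the same cancellation mechanism producing the transport combination $\partial_t u + u\cdot\nabla u$ in \eqref{iteratedcommutator}. One point in your sketch of \eqref{iteratedcommutator} is misleading: you say that when $D_t$ hits $f$, ``the residual is handled by another \eqref{commutatorbsv1}-type estimate on $D_t f$.'' Taken literally that would produce a term $\lambda^s\|\nabla u\|_{C^0}\|D_t f\|_{C^0}$, which does not appear in the claimed bound. What actually happens is that, expanding $[D_t,[u\cdot\nabla,T_K]]f = D_t\bigl([u\cdot\nabla,T_K]f\bigr) - [u\cdot\nabla,T_K]\bigl(D_tf\bigr)$, the $\partial_t f(x-y)$ contributions cancel exactly, while the convection pieces combine into $(u(x)-u(x-y))\cdot\nabla f(x-y)$; together with the $(u(x)-u(x-y))$ factor already sitting against $\nabla K(y)$, this is precisely the ``quadratic in $u(x)-u(y)$'' term you mention later, and it is estimated by $\||y|^2\nabla K\|_{L^1}\lesssim\lambda^{s-1}$ to give $\lambda^{s-1}\|u\|_{C^1}^2\|f\|_{C^1}$. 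No separate estimate on $D_t f$ is needed, and indeed $D_t f$ never survives into the final bound. With that one correction your route is the paper's.
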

\begin{proof}
The proof of (1) is contained in the appendix of \cite{bsv16}.  Moving on to the iterated commutator estimate of (2), we first write
\begin{align*}
    \left[ u \cdot \grad, T_K \right](f) &= u(x) \cdot \grad \int_{\mathbb{R}^3} K(y)f(x-y)\,dy - \int_{\mathbb{R}^3} K(y) u(x-y)\cdot \grad f(x-y)\,dy \\
    &= \int_{\mathbb{R}^3} f(x-y) \grad K(y) \cdot \left(u(x) - u(x-y)\right).
\end{align*}
Now expanding the iterated commutator, we have
\begin{align*}
    &\left[ \partial_t + u \cdot \grad, \left[ u\cdot \grad, T_K \right] \right](f) = \\ &\qquad \left( \partial_t + u(x) \cdot \grad \right)\left( \int_{\mathbb{R}^3}  f(x-y)\grad K(y)\cdot \left( u(x)-u(x-y) \right) \right)\,dy \\
    &\qquad \qquad - \int_{\mathbb{R}^3} \left( \partial_t f(x-y) + u(x-y) \cdot \grad f(x-y) \right) \grad K(y) \cdot \left( u(x) - u(x-y) \right)\,dy\\
    &\qquad = \int_{\mathbb{R}^3} \left( \left( u(x)-u(x-y) \right)\cdot \grad f(x-y) \right) \grad K(y) \cdot \left( u(x)-u(x-y) \right)\,dy \\
    &\qquad \qquad + \int_{\mathbb{R}^3} f(x-y) \grad K(y) \cdot \left( \partial_t u(x) + u(x)\cdot \grad u(x) - \partial_t u(x-y) - u(x)\cdot \grad u(x-y) \right)\,dy \\
    &\qquad =: I + II.
\end{align*}
Estimating $I$ first, we write
\begin{align*}
    I &\leq \int_{\mathbb{R}^3} |\grad K(y)| \| u \|_{C^1}^2 |y|^2 \| f \|_{C^1} \,dy \\
    &\leq \| u \|_{C^1}^2 \| f \|_{C^1} \lambda^{s-1}.
\end{align*}
Before estimating II, note that 
$$ \partial_t u(x-y) + u(x)\cdot \grad u(x-y) = \left( \left( \partial_t + u \cdot \grad \right)u\right)(x-y) + \left( \left( u(x)-u(x-y) \right)\cdot \grad u(x-y) \right).$$
Therefore, 
\begin{align*}
    II & 2\leq \int_{\mathbb{R}^3}  \| f \|_{C^0} |\grad K(y)| \left( \| \partial_t u + u \cdot \grad u \|_{C^0} + |y| \|u\|_{C^1}^2 \right) \,dy\\
    &\lesssim \|f\|_{C^0} \left( \lambda^{s+1} \| \partial_t u + u \cdot \grad u \|_{C^0} + \lambda^s \|u\|_{C^1}^2  \right).
\end{align*}
Combining the estimates gives the result.

To prove (3), we follow the idea from \cite{bsv16} and write that
\begin{align*}
    \left|\nabla^k \left( T_K(bf)(x) - b(x) T_K f(x) \right) \right| &= \left| \int_{\mathbb{R}^n} \nabla^k \left( (b(x) - b(x-y)) f(x-y) \right) K(y) \,dy \right|\\
    &= \left| \int_{\mathbb{R}^n} \nabla^k \left( \left( \int_0^1 \nabla b(x-sy) \,ds \right) \cdot y f(x-y) \right) K(y) \,dy \right|.
\end{align*}
Applying the Leibniz rule and using the integrability assumption on $K$ finishes the proof.
\end{proof}

\begin{subsection}{Infinitely Many Weak Solutions Sharing the Same Initial Data}\label{ss:infinitelymany}
In this subsection, we state and outline the adjustments necessary to prove that infinitely many weak solutions may arise out of a single smooth initial data. We present and outline the proof of the following theorem.
\begin{theorem}\label{nonuniqueness:appendix}
Let $\Psi_0\in C^\infty(\mathbb{T}^2\times[0,2\pi])$ be a given mean-zero, smooth initial datum with an associated unique classical solution $\nabla\Psi$ of 3D QG on the time interval $[0,T_0]$, with $T_0<T_{\textnormal{max}}$ where $T_{\textnormal{max}}$ is the maximal time of existence. Then given $\zeta\in(0,\frac{1}{5})$, there exist infinitely many weak solutions $\{\nabla\Psi^\theta\}_{\theta\in\Theta}$ each belonging to $C^\zeta([0,T_0]\times\mathbb{T}^2\times[0,2\pi])$ such that for all $\theta\in\Theta$ and all $t\in\left[0,\frac{T_0}{2}\right)$ and all $(x,y,z)\in\mathbb{T}^2\times[0,2\pi]$,
$$  \nabla\Psi^\theta(t,x,y,z) = \nabla \Psi(t,x,y,z) . $$
\end{theorem}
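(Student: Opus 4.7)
The plan is to reinitialize the scheme of Section~\ref{Convexintegrationscheme} with the smooth classical solution $\nabla\Psi$ as a background and to parameterize the target energy profile so that distinct parameters yield distinct weak solutions. The central tool is already present in the existing scheme: by \eqref{inductiveenergyprofiletwo}, whenever the local energy deficit drops below $\dqone/8$, the error $\Mdot_q$ must vanish, so no perturbation is added at the next stage. By arranging the deficit to be identically zero on $[0, T_0/2]$, no perturbation will ever be introduced there and the iterates will reproduce $\nabla\Psi$ on that interval.

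First I would define, for each $\theta$ in an infinite index set $\Theta$, a smooth target profile $e^\theta : [0, T_0] \to [0, \infty)$ with $e^\theta(t) \equiv E(t) := \int_{\torus} |\nabla\Psi(t)|^2$ for $t \in [0, T_0/2 + \gamma]$ (with $\gamma > 0$ a fixed buffer), while on $[T_0/2 + 2\gamma, T_0]$ the profiles would depend nontrivially on $\theta$ (for instance $e^\theta = E + \theta \psi$ for a fixed smooth bump $\psi$) with arbitrarily small amplitude. Next I would rewrite the iterates as $\nabla\Psi_q = \nabla\Psi + \nabla u_q$, treating $\nabla u_q$ and $\Mdot_q$ as the objects satisfying the spatial and frequency support assumptions \eqref{inductivefrequencysupport}--\eqref{inductivespatialsupport}, and initialize at a small stage $q_0$ with $\nabla u_{q_0} \equiv 0$, $Q_{q_0} = Q$, $\Mdot_{q_0} \equiv 0$. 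Choosing the base $a$ large enough so that the finite $C^n$ norms of $\nabla\Psi$ are dominated by $\dq^\frac{1}{2}\laq^n$ uniformly in $q \geq q_0$ (possible because $\dq^\frac{1}{2}\laq^n = a^{(cn - 1/2)b^q}$ grows with $q$ for $c > 5/2$ and $n \geq 1$), and $\theta$ chosen small enough that $|e^\theta - E| \leq \dqone$ at $q = q_0$, the inductive assumptions \eqref{inductiveequation}--\eqref{inductiveenergyprofiletwo} will hold at $q_0$.

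The key new hypothesis I would add is a sequence $\tau_q \downarrow 0$ (say $\tau_q = \gamma/2^{q-q_0}$) such that $\nabla u_q \equiv 0$ and $\Mdot_q \equiv 0$ on $[0, T_0/2 + \tau_q]$. To propagate this through the inductive step, I would restrict the time cutoffs $\Xl$ in the definition of $\W$ to those indices with $l/\muqone \geq T_0/2 + \tau_{q+1} + 3/(4\muqone)$; this is consistent with $\tau_q \downarrow 0$ because $\muqone$ grows super-exponentially while $\tau_q - \tau_{q+1}$ decays only exponentially. On the restricted time range the construction of $\nabla(\Lq \W)$ and its associated transport, Nash, and oscillation errors proceed exactly as in Sections~\ref{Errorestimates}--\ref{energyincrement}, since the operators $\Pbarqone$, $\Pgradk$, $\D$, and $\gradperpinverse$ act only on spatial variables and hence preserve temporal support. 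The energy increment \cref{energyincrementlemma} continues to match $\int_{\torus} |\nabla\Psi_{q+1}|^2$ to $e^\theta$, so in the limit one obtains $\nabla\Psi^\theta \in C^\zeta([0, T_0] \times \torus)$ equal to $\nabla\Psi$ on $[0, T_0/2)$ and realizing energy $e^\theta$ on $[T_0/2 + 2\gamma, T_0]$, which yields infinitely many pairwise-distinct weak solutions.

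The hard part will be to verify that the new transport and Nash errors, which now involve cross terms between the smooth background $\nabla\Psi$ and the perturbation $\nabla(\Lq\W)$, still obey the bounds of \cref{transporterror} and \cref{nasherror}. This reduces to showing that $\nabla\Psi$ plays the same role in those estimates as $\nabla\Psi_q$ does in the current proof; the quantitative input is precisely the uniform bound on the $C^n$ norms of $\nabla\Psi$ discussed above, together with the fact that $\nabla\Psi$ is also a smooth solution of the system (so any linear error arising from the background is essentially harmless). The oscillation error is essentially unaffected by the background, since $\nabla\Psi$ does not enter into the oscillatory self-interactions of $\nabla(\Lq\W)$. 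Once these cross-term estimates are verified, the modified inductive proposition goes through verbatim and the theorem follows.
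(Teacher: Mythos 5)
Your proposal takes a genuinely different route from the paper's. You keep the classical solution $\nabla\Psi$ as the fixed background and distinguish solutions by varying the target energy profile $e^\theta$ on $(T_0/2 + 2\gamma, T_0]$, propagating the coincidence with $\nabla\Psi$ on $[0, T_0/2)$ through a geometrically decaying buffer $\tau_q$. The paper instead modifies the stream function itself, replacing $\Psi$ by $\Psi c_1 + c_2$ where $c_1$ deviates from one only on a compact subset of $(T_0/2, T_0) \times [0,2\pi]$; this creates a nontrivial initial stress $\Mdot_0$ supported where $c_1 \neq 1$, and infinitely many solutions arise by translating the support of $c_1$ (together with the corresponding bump in the energy profile). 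Both mechanisms plausibly work, and your buffer-sequence observation — that $\muqone$ grows super-exponentially while $\tau_q - \tau_{q+1}$ decays only exponentially, so the time support of each new perturbation stays inside $[T_0/2 + \tau_{q+1}, T_0]$ — is correct and in fact largely automatic from the scheme, since $\rho_l = 0$ already forces $a_{kl} = 0$ whenever the energy deficit is small.

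There is, however, a genuine gap where you wave your hands. You treat $\nabla u_q$ and $\Mdot_q$ as the objects satisfying \eqref{inductivefrequencysupport}--\eqref{inductivespatialsupport}, and you claim the cross-term estimates for the transport and Nash errors reduce to the uniform $C^n$ bounds on $\nabla\Psi$ obtained by taking $a$ large. This is not sufficient. The error estimates in Section~\ref{Errorestimates} crucially use the \emph{frequency} support of $\nabla\Psi_q$, not just its $C^n$ norms: the first step of \cref{transporterror} argues that because $\nabla\Psi_q$ has $x,y$ frequency support in a ball of radius $\laq$ and $\nabla(\Lq\W)$ is supported in an annulus at $\approx\laqone$, the transport error equals $\Pbarqone$ applied to itself, which is what lets you gain a factor $\laqone^{-1}$ from the anti-divergence. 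The smooth background $\nabla\Psi$ is not compactly supported in frequency, so $\grad^\perp\Psi\cdot\grad(\nabla(\Lq\W))$ is not frequency-localized, and this argument fails as written. The background is also not supported in $\mathbb{T}^2 \times [\frac{1}{l_q}, 2\pi - \frac{1}{l_q}]$, so \eqref{inductivespatialsupport} fails for $\nabla\Psi_q = \nabla\Psi + \nabla u_q$. The paper's proof confronts exactly these issues: it applies a Littlewood-Paley projector in $x, y$ (with cut-off $\lambda_0$) to the equation, absorbs the resulting commutator stress into $\Mdot_0$, and mollifies in $z$, before starting the iteration. You need an analogous preparatory step — project/mollify $\nabla\Psi$ onto the cylinder \eqref{inductivefrequencysupport} at scale $\lambda_0 = \laq$ (say) and put the commutator with the nonlinearity into the starting stress — and you should check that the resulting commutator stress is small in the $C^0$, $C^1$, and material-derivative norms required by \eqref{inductiveerror} and \eqref{inductivetransport}, which it is by the smoothness of $\Psi$ for $\lambda_0$ large. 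As written, the proposal omits this step, and the inductive estimates would not close.
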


We now briefly outline the proof of such a theorem using the techniques used to prove \cref{maintheorem}. First, the existence of a unique classical solution on a time interval $[0,T_0]$ dependent on the initial datum follows from standard methods.  Now consider smooth functions $c_1(t,z):[0,T_0]\times[0,2\pi]\rightarrow \mathbb{R}$ and $c_2(t):[0,T_0]\rightarrow \mathbb{R}$ which will later be chosen to satisfy several criteria.  Considering that $\Psi$ is a solution to 3D QG and repeating the calculation from \cref{stationarysolutions} which shows that functions of $z$ factor out of the nonlinear term, we have that $\Psi(t,x,y,z)c_1(t,z) + c_2(t)$ is a solution to
\begin{align*}
    \partial_t\nabla(\Psi c_1 + c_2 ) &+ \grad^\perp \left( \Psi c_1 + c_2 \right)\cdot\grad\nabla\left(\Psi c_1 + c_2 \right) = c_1 \left( \partial_t \nabla \Psi + \grad^\perp \Psi \cdot \grad \nabla \Psi \right)\\
    &\qquad\qquad + \partial_t c_1 \nabla \Psi + \partial_t \Psi \nabla c_1 + \partial_t \partial_z c_1 \Psi + \left( c_1^2 - c_1 \right) \grad^\perp\Psi\cdot\grad\nabla\Psi\\
    &= c_1 \curl Q \\
    &\quad + \Pgradbar\left( \partial_t c_1 \nabla \Psi + \partial_t \Psi \nabla c_1 + \partial_t \partial_z c \Psi + \left( c_1^2 - c_1 \right) \grad^\perp\Psi\cdot\grad\nabla\Psi \right)\\
    &\quad + \Pgradbarperp\left( \partial_t c_1 \nabla \Psi + \partial_t \Psi \nabla c_1 + \partial_t \partial_z c \Psi + \left( c_1^2 - c_1 \right) \grad^\perp\Psi\cdot\grad\nabla\Psi \right)\\
    &= \curl (c_1 Q) - \Pgradbar (\nabla c_1 \times Q) - \Pgradbarperp (\nabla c_1 \times Q) \\
    &\quad + \Pgradbar\left( \partial_t c_1 \nabla \Psi + \partial_t \Psi \nabla c_1 + \partial_t \partial_z c \Psi + \left( c_1^2 - c_1 \right) \grad^\perp\Psi\cdot\grad\nabla\Psi \right)\\
    &\quad + \Pgradbarperp\left( \partial_t c_1 \nabla \Psi + \partial_t \Psi \nabla c_1 + \partial_t \partial_z c \Psi + \left( c_1^2 - c_1 \right) \grad^\perp\Psi\cdot\grad\nabla\Psi \right).
\end{align*}

We now show that after choosing $c_1$ and $c_2$ carefully and making some simple observations, the convex integration procedure can be applied starting from $\Psi c_1 + c_2$, thus proving \cref{nonuniqueness:appendix}. We choose $c_1$ to be uniformly equal to one except on a compact subset of $[0,2\pi]\times(\frac{T_0}{2},T_0)$, with additional assumptions to follow later. We describe step-by-step how to verify the inductive assumptions for $\Psi c_1 + c_2$ at level $q=0$.
\begin{enumerate}
    \item Assumption \eqref{inductiveequation} follows after application of the inverse divergence operator to the $\Pgradbar$ terms, noticing that $\curl(c_1 Q)$ has vanishing third component at the boundary,  and noting that since $c_1$ is equal to one except on a compact set in $z$ and $t$, the $\Pgradbarperp$ terms have compact support in $z$ and vanish at $z=0,2\pi$, so that they may be absorbed into the curl.  We also choose $c_2$ so that $\Psi c_1+ c_2$ has mean zero in space for each time $t\in[0,T_0])$ 
    \item Assumption \eqref{inductivefrequencysupport} can be ensured by applying a Littlewood-Paley projector in $x$ and $y$ only to the equation satisfied by $\Psi c_1 + c_2$, which will produce a commutator stress after hitting the nonlinear terms.  Assuming that the projector acts as the identity on frequencies less than $\lambda_0$ and that $\lambda_0$ is sufficiently large, this commutator stress can be made arbitrarily small uniformly in time by the smoothness of $\Psi$
    \item Assumption \eqref{inductivespatialsupport} is satisfied for the matrix field $\Mdot_q$ due to $c_1$ equalling one except on a compact set in $z$ and $t$ and the fact that the $\Pgradbar$ and inverse divergence operators involve only convolution in $x$ and $y$. While $\Psi c_1 + c_2$ and the new curl will not be compactly supported in $z$, these assumptions are not strictly necessary to the convex integration scheme (the compact support in $z$ of $\Mdot_q$ was required to ensure that we can cancel it by adding perturbations compactly supported in $z$)
    \item Assumption \eqref{inductivevelocity} can be ensured by a sufficiently large choice of $\lambda_0$, the Littlewood-Paley projector in $x$ and $y$ acting as the identity on frequencies $\leq \lambda_0$, and a mollification in $z$ at sufficiently fine spatial scale inversely proportional to $\lambda_0$
    \item Assumption \eqref{inductiveerror} can be ensured by choosing the derivatives in $t$ and $z$ of $c_1$ to be small 
    \item Assumption \eqref{inductivetransport} can be ensured by a sufficiently large choice of $\lambda_0$
    \item Assumption \eqref{inductivecurl} can again be ensured by a large choice of $\lambda_0$
    \item Assumptions \eqref{inductiveenergyprofileone} and \eqref{inductiveenergyprofiletwo} can be ensured by choosing the energy profile $e(t)$ to be constant on $[0,\frac{T_0}{2})$ and to be slightly larger on the support of $c_1$ so that there is room for the addition of subsequent perturbations with non-zero energy  
\end{enumerate}
Thus we can construct a weak solution verifying the inductive assumptions of the convex integration procedure at level $q=0$.  Since there is no error on the time interval $[0,\frac{T_0}{2})$, the final weak solution constructed will agree with $\nabla\Psi$ for those times. The H\"{o}lder regularity follows as well.  Producing infinitely many such solutions follows from translating the support of the function $c_1$ in time and space.
\end{subsection}

\bibliography{references}
\bibliographystyle{plain}
\end{document}